\DeclareMathOperator{\id}{id}
\numberwithin{equation}{section}
\DeclareMathOperator{\HCal}{\mathcal{H}}
\DeclareMathOperator{\HomSh}{\mathscr{H}om}
\newcommand{\multGpSch}{{\mathbb{G}_m}}
\newcommand{\tens}{\otimes}
\newcommand{\iso}{\simeq}
\newcommand{\defeq}{\mathrel{\mathop:}=}
\newcommand {\inj}{\hookrightarrow}
\newcommand {\Hom}{{\rm Hom}}
\newcommand {\Ext}{{\rm Ext}}
\newcommand{\dR}{\rm dR}
\newcommand{\cris}{\rm cris}
\newcommand{\Hdg}{\rm Hdg}
\newcommand{\totale}{\rm total}
\newcommand{\slope}{\rm slope}
\DeclareMathOperator{\Spec}{Spec}
\DeclareMathOperator{\Frob}{Frob}
\newcommand {\supp}[1]{{\rm Supp}(#1)}
\newcommand {\inv}{^{-1}}
\newcommand {\integ}{\mathbb Z}
\newcommand {\C}{\mathbb C}
\newcommand {\rat}{\mathbb Q}
\DeclareMathOperator{\DD}{D}
\DeclareMathOperator{\bb}{b}
\DeclareMathOperator{\coh}{coh}
\newcommand{\Dbcoh}{\DD^{\bb}_{\coh}}
\DeclareMathOperator{\perf}{perf}
\newcommand{\DperfAbs}{\DD_{\perf}}
\DeclareMathOperator{\Ker}{Ker}
\newcommand{\OCal}{\mathcal{O}}
\DeclareMathOperator{\can}{can}
\DeclareMathOperator{\ACal}{\mathcal{A}}
\DeclareMathOperator{\Aut}{Aut}
\newcommand{\surj}{\twoheadrightarrow}
\theoremstyle{plain}
\newtheorem{Th}{Theorem}[section]
\newtheorem{Lemma}[Th]{Lemma}
\newtheorem{Cor}[Th]{Corollary}
\newtheorem{Prop}[Th]{Proposition}
\newtheorem{Prop/Def}[Th]{Proposition/Definition}
 \theoremstyle{definition}
\newtheorem{Def}[Th]{Definition}
\newtheorem{Conj}[Th]{Conjecture}
\newtheorem{Question}[Th]{Question}
\newtheorem{Rem}[Th]{Remark}
\newtheorem{Ex}[Th]{Example}
\newcommand{\JCal}{\mathcal{J}}
\newcommand{\ZCal}{\mathcal{Z}}
\DeclareMathOperator{\Alb}{Alb}
\DeclareMathOperator{\et}{\acute et}
\DeclareMathAlphabet{\mathantt}{OT1}{antt}{li}{it}
\DeclareMathOperator{\character}{char}
\DeclareMathOperator{\Frac}{Frac}
\DeclareMathOperator{\Pic}{Pic}
\newcommand{\Dcal}{\mathcal{D}}
\newcommand{\LCal}{\mathcal{L}}
\numberwithin{equation}{section}
\theoremstyle{plain}
 \theoremstyle{definition}
\DeclareMathOperator{\red}{red}
\newcommand{\FCal}{\mathcal{F}}
\newcommand{\XCal}{\mathcal{X}}
\newcommand{\YCal}{\mathcal{Y}}
\newcommand{\ECal}{\mathcal{E}}
\DeclareMathOperator{\CH}{CH}
\DeclareMathOperator{\FMP}{FMP}
\begin{document}

\title[Fourier-Mukai Partners of Abelian Varieties and K3 surfaces]{Fourier-Mukai Partners of Abelian Varieties and K3 surfaces in Positive and Mixed Characteristics}

\author[R. Kurama]{Riku Kurama}

\address{Department of Mathematics, University of Michigan, Ann Arbor, MI 48109} 

\email{rkurama@umich.edu}
\maketitle

\begin{abstract}
We study Fourier-Mukai equivalences of (families of) abelian varieties and K3 surfaces in positive and mixed characteristics. We first prove 
in any characteristics 
that Fourier-Mukai partners of abelian varieties are again abelian varieties. We subsequently focus on the canonical lifts of ordinary abelian varieties and ordinary K3 surfaces. For such schemes, we show that Fourier-Mukai equivalences on the special fibers can be lifted to the canonical lifts. We also prove that the relative Fourier-Mukai partners of the canonical lifts are in bijection with the Fourier-Mukai partners of the special fibers. We conclude by demonstrating that the last result can be used to recover the ordinary case of a result originally proved by Honigs, Lombardi and Tirabassi.
\end{abstract}
\tableofcontents
\let\thefootnote\relax\footnotetext{Keywords: derived equivalence, abelian varieties, K3 surfaces, positive and mixed characteristics.}
\let\thefootnote\relax\footnotetext{MSC2020: 14F08, 14G17, 14G45, 14J28, 14K05.}
\section{Introduction}
One of the first attempts to understand the landscape of Fourier-Mukai equivalences in positive characteristics is due to Lieblich and Olsson (\cite{LO}), where a key technique was the lifting of Fourier-Mukai transforms (or their kernels) from characteristic $p$ to characteristic $0$.
Motivated by this, a natural question is whether there are interesting classes of examples where one can lift Fourier-Mukai equivalences from characteristic $p$ to characteristic $0$. 
In \cite{DelCris}, starting with an ordinary abelian variety or an ordinary K3 surface (the precise assumptions are as in Theorem \ref{MainLift} below), Deligne exhibits the so-called ``canonical coordinates'' on its mixed-characteristic formal moduli space and uses them to constructs its (algebraizable) canonical lift over the ring of Witt vectors (see also \cite{DelK3}, \cite{KatzST}, \cite{BranTael}).
The first contribution of this paper is that such canonical lifts provide examples where kernels of Fourier-Mukai equivalences can be always lifted, answering the question above: 

\begin{Th}[Lifting Fourier-Mukai equivalences; Propositions \ref{CanLiftAVKer}, \ref{K3LiftKer}]\label{MainLift}
Let $k$ be an algebraically closed field of characteristic $p>0$.
Let $X$ and $Y$ be ordinary abelian varieties (resp.\ ordinary K3 surfaces in which case we assume $p>2$), and assume that we have a kernel $E\in\Dbcoh(X\times_k Y)$ defining a Fourier-Mukai equivalence.
Then $E$ lifts to a perfect complex $\ECal\in\DperfAbs(X_{\can}\times_{W(k)}Y_{\can})$ defining a relative Fourier-Mukai equivalence over $W(k)$, where $X_{\can}$ and $Y_{\can}$ denote the (algebraized) canonical lifts of $X$ and $Y$ over $W(k)$.
\end{Th}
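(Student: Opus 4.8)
The plan is to lift $E$ successively along the square-zero thickenings $Z_1=X\times_k Y\hookrightarrow Z_2\hookrightarrow Z_3\hookrightarrow\cdots$, where $Z_n$ is the reduction of $X_{\can}\times_{W(k)}Y_{\can}$ modulo $p^n$, and then to algebraize. At each stage the ideal of $Z_n\hookrightarrow Z_{n+1}$ is isomorphic to $\mathcal O_{X\times_k Y}$, so by the deformation theory of perfect complexes on a proper morphism (see Lieblich, Illusie) a lift $E_n\in\DperfAbs(Z_n)$ of $E$ extends over $Z_{n+1}$ exactly when an obstruction in $\Ext^2_{X\times_k Y}(E,E)$ vanishes, the extensions then forming a torsor under $\Ext^1_{X\times_k Y}(E,E)$. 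Since $E$ is the kernel of an equivalence, $\Ext^\ast_{X\times_k Y}(E,E)$ is the Hochschild cohomology $\mathrm{HH}^\ast(X)$, which $\Phi_E$ also identifies with $\mathrm{HH}^\ast(Y)$; the Hochschild--Kostant--Rosenberg decomposition is available here (automatically for abelian varieties; for K3 surfaces since $p>2$), so that $\mathrm{HH}^2$ contains the summand $H^1(-,T_{-})$ of first-order deformation classes. Once a compatible system $\{E_n\}$ exists, Grothendieck existence (the $Z_n$ are projective over $W(k)/p^n$) algebraizes it to the desired $\mathcal E\in\DperfAbs(X_{\can}\times_{W(k)}Y_{\can})$ restricting to $E$.

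\textbf{Abelian varieties.} Here I would bypass the obstruction calculus and use the explicit structure of the kernels: by the Orlov--Polishchuk description, $\Phi_E$ is built, up to shift and twist, from Poincar\'e bundles, graphs of isogenies and isomorphisms, and line bundles on products of $X,\hat X,Y,\hat Y$, and corresponds to an isometry $f\colon X\times\hat X\xrightarrow{\sim}Y\times\hat Y$ for the natural forms. Each ingredient lifts canonically over $W(k)$: the canonical lift is functorial in homomorphisms and compatible with products and duality (Serre--Tate; \cite{DelCris}), so isogenies of ordinary abelian varieties lift to isogenies of the canonical lifts; Poincar\'e bundles exist in families; and $\mathrm{NS}(X_{\can})=\mathrm{NS}(X)$ because $\mathrm{Hom}(X,\hat X)$ lifts, so line bundles lift after adjusting their classes in $\hat X_{\can}$. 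Assembling the lifted ingredients produces $\mathcal E$ whose special fiber induces the same equivalence as $\Phi_E$ and is therefore isomorphic to $E$ after fixing the shift.

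\textbf{K3 surfaces.} Lacking such a recipe, I would run the obstruction argument. A computation with the Atiyah class of $E$ in the spirit of Toda's study of deformations of Fourier--Mukai kernels (whose obstruction-theoretic core is characteristic-free) identifies the obstruction to passing from $E_n$ to $E_{n+1}$ --- using that $Z_n\hookrightarrow Z_{n+1}$ is a product of thickenings and that $\Omega^1_{X\times_k Y}$ splits accordingly --- with $\Phi_E^{\mathrm{HH}}(\kappa_{X_{\can},n})-\kappa_{Y_{\can},n}$ in $\Ext^2_{X\times_k Y}(E,E)$, where $\kappa_{X_{\can},n},\kappa_{Y_{\can},n}\in H^1(-,T_{-})\subset\mathrm{HH}^2$ are the Kodaira--Spencer classes of the canonical lifts at level $n$. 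Thus all obstructions vanish precisely when $\Phi_E$ matches the canonical deformations of $X$ and $Y$ to every order. To establish this, observe that $\Phi_E$ induces an isometry of the crystalline (and $\ell$-adic) Mukai realizations of $X$ and $Y$ compatible with Frobenius, and that the canonical lift of an ordinary K3 surface is characterized (\cite{DelCris}; Nygaard--Ogus) by its Hodge filtration inside $H^2_{\mathrm{cris}}$ being the Frobenius-canonical one; a Frobenius-equivariant isometry therefore carries the canonical Hodge filtration on $X_{\can}$ to that on $Y_{\can}$, and interpreting this through the period map and the crystalline description of the Kodaira--Spencer classes yields the required matching.

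\textbf{Conclusion, and the main difficulty.} With the obstructions gone, the system $\{E_n\}$ algebraizes to $\mathcal E$ as above. To see that $\mathcal E$ defines a \emph{relative} Fourier--Mukai equivalence over $W(k)$, note that the relative kernel computing the composite of $\Phi_{\mathcal E}$ with the functor of the adjoint (dual) kernel restricts on the special fiber to the kernel of $\Phi_E^{-1}\circ\Phi_E=\id_{\DD(X)}$, i.e.\ to $\mathcal O_\Delta$ on $X\times_k X$; by base change for the convolution and Nakayama it is then $\mathcal O_\Delta$ over all of $W(k)$, and likewise on the $Y$-side, so $\Phi_{\mathcal E}$ is an equivalence on every fiber and hence relatively. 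The main obstacle is the middle step of the K3 argument: descending faithfully from the Frobenius-crystalline compatibility of the Mukai realization to the infinitesimal statement that $\Phi_E^{\mathrm{HH}}$ matches the Kodaira--Spencer classes of the canonical lifts uniformly in $n$ --- essentially a mixed-characteristic Torelli-type input leaning on Nygaard--Ogus and on the deformation theory of K3 surfaces and their moduli of sheaves, in the circle of ideas of \cite{LO}.
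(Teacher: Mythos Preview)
Your approach is broadly workable but diverges from the paper's in both cases, and in the K3 case you are essentially re-deriving a theorem the paper simply cites.

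\textbf{Abelian varieties.} The paper does not go through the Orlov--Polishchuk classification of equivalences. Instead it treats $E$ as an object on the ordinary abelian variety $A\times_k B$ and applies a variant of the classification of point-like objects (after \cite{DO}): any simple object with stabilizer of dimension at least $\dim(A\times_k B)$ is, up to shift, of the form $(t_a)_*\iota_*\pi_*L$ for a sub-abelian variety $\iota$, an isogeny $\pi$, and a line bundle $L$ (using Mukai's description of simple semi-homogeneous bundles as pushforwards of line bundles along isogenies). One checks the stabilizer condition for $E$ by identifying $(S_E)_{\red}$ with the graph of Orlov's isometry $f_E$. All the abelian varieties appearing are ordinary (Lemma~\ref{ordAV}), so the homomorphisms and $L$ lift to the canonical lifts by Serre--Tate functoriality (\cite[Appendix]{MS}), and the pushforward of the lifted line bundle along the lifted finite map is the desired $\ECal$. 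This is more direct than your route: it identifies $E$ \emph{itself} as a concrete sheaf that visibly lifts, so there is no need for the step ``the assembled lift induces the same functor on the special fiber, hence is isomorphic to $E$.'' (That step is fine over a field, but unnecessary here.)

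\textbf{K3 surfaces.} The paper does not carry out the obstruction calculus you sketch; it invokes \cite[Theorem~7.1]{LO}, which already packages it and reduces the problem to a single Hodge-theoretic check: $E$ lifts to prescribed lifts $\XCal,\YCal$ if and only if the crystalline cohomological transform $\Phi_E^{\cris}$ carries $Fil^2_{\Hdg}$ of $\XCal$ to that of $\YCal$ under the Berthelot--Ogus isomorphisms. For canonical lifts this is immediate, because those Hodge filtrations coincide with the slope filtrations (Proposition~\ref{CanLiftViaMukaiK3}) and $\Phi_E^{\cris}$ is Frobenius-equivariant. The ``main difficulty'' you isolate---passing from the crystalline isometry to the infinitesimal matching of Kodaira--Spencer classes at every level---is precisely the content of \cite{LO}, so the paper cites it rather than reproving it. Your argument, if made rigorous, would amount to reestablishing that theorem.

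Your final paragraph (a lift of a kernel which is an equivalence on the closed fiber is a relative equivalence) agrees with the paper's Corollary~\ref{LiftEquiv}.
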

\begin{Rem}
    Note that we are not claiming here that there is a canonical choice for the lift of the kernel (see Remark \ref{NonUniqueLift}). However, in the K3 surface case, the lift is in fact unique (Proposition \ref{K3LiftKer}).  
As the proof will make clear, the K3 surface case of this theorem is a consequence of a theorem from \cite{LO}.
\end{Rem}
Applying this theorem to the Chern character of the kernel, we obtain the following special case of the $p$-adic variational Hodge conjecture (\cite{Emerton}):
\begin{Cor}\label{MainLiftCor}
Let $X$, $Y$ and $E$ be as in Theorem \ref{MainLift}. We consider the Chern classes $c_i(E)\in\CH^{i}(X\times_kY)_\rat$ in the rational Chow groups. If we view $X\times_kY$ as the special fiber of the $W(k)$-scheme $X_{\can}\times_{W(k)}Y_{\can}$, the $p$-adic variational Hodge conjecture holds for the class $c_i(E)$.
\end{Cor}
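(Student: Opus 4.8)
The plan is to observe that Theorem \ref{MainLift} already supplies the cycle-theoretic lift demanded by the $p$-adic variational Hodge conjecture, so that --- once the relevant cycle class maps are matched up --- the corollary is essentially formal. Recall the shape of the conjecture (see \cite{Emerton}): for a smooth proper $W(k)$-scheme $\mathcal{X}$ with special fiber $X_0$ and generic fiber $\mathcal{X}_K$ over $K\defeq W(k)[1/p]$, composing the crystalline cycle class with the crystalline--de Rham comparison yields
\[
\CH^i(X_0)_\rat \longrightarrow H^{2i}_{\cris}(X_0/W(k))_\rat \iso H^{2i}_{\dR}(\mathcal{X}_K/K),
\]
and one predicts, for $z\in\CH^i(X_0)_\rat$, that the image of $z$ lies in $F^i H^{2i}_{\dR}(\mathcal{X}_K/K)$ if and only if $z$ is the restriction of a class in $\CH^i(\mathcal{X})_\rat$. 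The implication ``restriction of an algebraic class $\Rightarrow$ lies in $F^i$'' is automatic; the content is the converse. Saying that the conjecture \emph{holds for} $c_i(E)$ means that this biconditional is verified for $z=c_i(E)$.

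First I would set $\mathcal{X}\defeq X_{\can}\times_{W(k)}Y_{\can}$, so that $X_0=X\times_k Y$, and let $\iota\colon X_0\hookrightarrow\mathcal{X}$ denote the inclusion of the special fiber. By Theorem \ref{MainLift} the kernel lifts to $\ECal\in\DperfAbs(\mathcal{X})$ with $L\iota^*\ECal\iso E$ in $\Dbcoh(X_0)$. Since the Chern character of perfect complexes is functorial for derived pullback, $\iota^*\ch(\ECal)=\ch(E)$ in $\CH^*(X_0)_\rat$; as each $c_i$ is a universal polynomial in the graded pieces of $\ch$, it follows that $\iota^* c_i(\ECal)=c_i(E)$. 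Hence $c_i(E)$ is the restriction of the algebraic class $c_i(\ECal)\in\CH^i(\mathcal{X})_\rat$, which is precisely the cycle-theoretic conclusion of the conjecture --- and here it holds unconditionally. For the remaining direction: the de Rham realization of $c_i(\ECal)$ is an algebraic cohomology class on $\mathcal{X}_K$, hence lies in $F^i H^{2i}_{\dR}(\mathcal{X}_K/K)$; by the compatibility of the de Rham/crystalline Chern class maps with the crystalline--de Rham comparison and with restriction to $X_0$, this class is carried to the crystalline cycle class of $c_i(E)$. So the crystalline class of $c_i(E)$ lies in $F^i$ as well, and both sides of the biconditional hold.

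The only genuine point of care is this last compatibility --- that the crystalline cycle class of $c_i(E)$ on $X_0$ agrees, under $H^{2i}_{\cris}(X_0/W(k))_\rat\iso H^{2i}_{\dR}(\mathcal{X}_K/K)$, with the restriction of the de Rham Chern class of $\ECal$ --- together with the bookkeeping of torsion, which is dealt with by working with $\rat$-coefficients throughout. Neither is a real obstacle: the first is a standard functoriality of cycle class maps (Gros, Berthelot--Illusie), and all the substance of the statement, namely the construction of the lift $\ECal$ of the kernel, is already contained in Theorem \ref{MainLift}. In short, the corollary asserts nothing beyond Theorem \ref{MainLift}: it merely records that an explicitly produced lift of a cycle witnesses the conjecture for the Chern classes of the Fourier--Mukai kernel.
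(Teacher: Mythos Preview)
Your proposal is correct and matches the paper's approach: the substance is entirely in Theorem~\ref{MainLift}, and once the kernel lifts to a perfect complex $\ECal$ on $X_{\can}\times_{W(k)}Y_{\can}$, the Chern classes $c_i(\ECal)$ specialize to $c_i(E)$, which is exactly the conclusion demanded by the conjecture. The paper records this just as tersely, pointing (via Proposition~\ref{padicVarHdgAV} and Proposition~\ref{K3LiftKer}) to the lifting results and to the Chern class compatibility in \S\ref{secChern}; the only minor difference is that for abelian varieties the paper separately observes that the Hodge-filtration hypothesis is automatically satisfied on canonical lifts (via the slope filtration, see the paragraph preceding Proposition~\ref{padicVarHdgAV}), whereas you deduce it from the existence of the lift---either route is fine, and yours is arguably cleaner since it applies uniformly to both cases.
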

\begin{Rem}
    Again, the K3 surface case of this claim is a corollary of a result in \cite{LO}. In the abelian variety case, we in fact prove a slightly stronger statement (Proposition \ref{padicVarHdgAV}).
\end{Rem}

Moreover, we describe the relative Fourier-Mukai partners of such canonical lifts as follows (the case of ordinary K3 surfaces was essentially already known by \cite{TanK3}):
\begin{Th}[Relative Fourier-Mukai partners of canonical lifts; Propositions \ref{FMPofCanLiftAV}, \ref{FMPofCanLiftK3}]\label{MainFMPofCanLift}
 Let $k$ be an algebraically closed field of characteristic $p>0$.
Let $X$ be an ordinary abelian variety (resp.\ an ordinary K3 surface in which case we assume $p>2$). Then,
the restriction to the special fiber defines a bijection
\[
\FMP(X_{\can}/W(k))\longrightarrow \FMP(X/k),
\] where $\FMP(X_{\can}/W(k))$ denotes the set of (isomorphism classes of) $W(k)$-relative Fourier-Mukai partners of $X_{\can}$, and $\FMP(X/k)$ is similarly defined.

In particular, $\FMP(X_{\can}/W(k))$ is finite. 
\end{Th}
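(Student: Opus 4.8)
The plan is to show that restriction to the special fiber, $\ZCal\mapsto\ZCal_0$, and formation of the canonical lift, $Z\mapsto Z_\can$, are mutually inverse bijections between $\FMP(X_\can/W(k))$ and $\FMP(X/k)$. This comes down to three points: (a) $\ZCal_0\in\FMP(X/k)$ whenever $\ZCal\in\FMP(X_\can/W(k))$; (b) $Z_\can\in\FMP(X_\can/W(k))$, with special fiber $Z$, whenever $Z\in\FMP(X/k)$; and (c) every $\ZCal\in\FMP(X_\can/W(k))$ equals $(\ZCal_0)_{\can}$. For (a): let $\ECal$ be the kernel of the relative equivalence and $\mathcal{F}$ a quasi-inverse kernel; since all complexes involved are perfect and flat over $W(k)$, derived base change along $W(k)\to k$ turns the convolution isomorphisms $\ECal*\mathcal{F}\iso\OCal_{\Delta}$ and $\mathcal{F}*\ECal\iso\OCal_{\Delta}$ into the corresponding statements on the special fibers, so $\ECal_0$ defines a Fourier--Mukai equivalence $\Dbcoh(X)\iso\Dbcoh(\ZCal_0)$ and $\ZCal_0\in\FMP(X/k)$. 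For (b): the crystalline realization of a Fourier--Mukai equivalence $\Dbcoh(X)\iso\Dbcoh(Z)$ identifies $H^{*}_{\cris}(X/W(k))$ and $H^{*}_{\cris}(Z/W(k))$ as $F$-isocrystals up to Tate twists, so $Z$ is again ordinary (ordinariness being detected by this structure); since a Fourier--Mukai partner of an abelian variety (resp.\ K3 surface) is of the same type, $Z$ is an ordinary abelian variety (resp.\ ordinary K3 surface) and its canonical lift $Z_\can$ exists. Applying Theorem~\ref{MainLift} to the kernel of an equivalence $\Dbcoh(X)\iso\Dbcoh(Z)$ then produces a relative Fourier--Mukai equivalence with kernel in $\DperfAbs(X_{\can}\times_{W(k)}Z_{\can})$, so $Z_\can\in\FMP(X_\can/W(k))$.

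The heart of the matter is (c). Fix $\ZCal\in\FMP(X_\can/W(k))$ with kernel $\ECal$ and special fiber $Z$, which is ordinary by (b). Consider the de Rham realization $\Phi^{\dR}\colon\bigoplus_i H^{i}_{\dR}(X_\can/W(k))\to\bigoplus_i H^{i}_{\dR}(\ZCal/W(k))$, $\alpha\mapsto q_{*}(p^{*}\alpha\cup\ch^{\dR}(\ECal)\cup\sqrt{\mathrm{Td}})$ with $p,q$ the projections; it is an isomorphism (use the inverse kernel) and, after the standard Mukai/Tate normalization, a morphism of filtered $W(k)$-modules, since the Chern classes of a perfect complex lie in the expected pieces of the Hodge filtration. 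Under the comparison isomorphism between crystalline and de Rham cohomology, $\Phi^{\dR}$ agrees with the crystalline realization built from $\ch^{\cris}(\ECal_0)$, which --- the crystalline Chern classes of a perfect complex being Tate classes --- is Frobenius-compatible after the same normalization, hence preserves the Newton (slope) filtrations. Now, by the theory of the canonical lift (\cite{DelCris}, \cite{KatzST} for abelian varieties; the Deligne--Illusie/Nygaard--Ogus picture, and \cite{TanK3}, for K3 surfaces), a lift of an ordinary abelian variety over $W(k)$ is the canonical one exactly when the Hodge filtration on $H^{1}_{\dR}$ equals the slope-$\geq 1$ part of the $F$-crystal $H^{1}_{\cris}$ --- equivalently, when on each $H^{i}_{\dR}$ the Hodge and slope filtrations coincide, via $H^{i}=\bigwedge^{i}H^{1}$ --- and analogously for $H^{2}$ of an ordinary K3 surface. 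Since $X_\can$ is canonical, $\Phi^{\dR}=\Phi^{\cris}$ carries its Hodge filtration, which equals its slope filtration, to the slope filtration of $\bigoplus_i H^{i}_{\cris}(Z/W(k))$; it also carries it to the Hodge filtration of $\ZCal$. As both filtrations are homogeneous for the cohomological degree, comparing the degree-$1$ (resp.\ degree-$2$) summand shows that $\ZCal$ satisfies the canonical-lift characterization, i.e.\ $\ZCal\iso(\ZCal_0)_{\can}=Z_\can$.

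For abelian varieties (c) also admits a more direct proof: a relative Fourier--Mukai equivalence yields, via the description of Fourier--Mukai partners of abelian varieties in terms of symplectic isomorphisms of $X\times_k\hat X$ (the same description used to prove that Fourier--Mukai partners of abelian varieties are abelian varieties), a relative isomorphism $\ZCal\times_{W(k)}\widehat{\ZCal}\iso X_\can\times_{W(k)}\widehat{X_{\can}}=(X\times_k\hat X)_{\can}$; the right-hand side is a canonical lift, hence so is $\ZCal\times_{W(k)}\widehat{\ZCal}$, and therefore so is its direct factor $\ZCal$ --- because, by Serre--Tate theory (\cite{KatzST}), the deformation functor of a product of ordinary abelian varieties is the product of the deformation functors and the canonical lift is its origin. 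Granting (a)--(c), the map in the statement is a bijection with inverse $Z\mapsto Z_\can$, and finiteness of $\FMP(X_\can/W(k))$ follows from that of $\FMP(X/k)$: this is classical for K3 surfaces (and, for ordinary K3 surfaces in positive characteristic, part of \cite{TanK3}), and for abelian varieties it reduces to finiteness of the set of isomorphism classes of symplectic isomorphisms $X\times_k\hat X\iso Y\times_k\hat Y$.

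I expect (c) to be the main obstacle. One must pin down the Mukai/Tate normalization so that a single realization map is at once an isomorphism of filtered $W(k)$-modules and of $F$-isocrystals --- keeping track, in the K3 case, of the half-Tate twists in the Mukai lattice --- and then match it precisely with the crystalline characterization of the canonical lift from \cite{DelCris} (and its K3 analogue); this is exactly where ordinariness enters essentially. The alternative route for abelian varieties instead relocates the difficulty to proving the relative, over-$W(k)$ form of the symplectic-isomorphism description of Fourier--Mukai partners.
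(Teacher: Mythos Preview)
Your proposal is correct and follows essentially the same approach as the paper: steps (a)--(c) correspond exactly to Remark~\ref{baseChangeFMP}, Propositions~\ref{CanLiftAVKer}/\ref{K3LiftKer} (via Theorem~\ref{MainLift}), and Propositions~\ref{CanFMPAV}/\ref{CanFMPK3}, the last being proved---just as you outline---by comparing the de Rham and crystalline Mukai realizations through Berthelot--Ogus and invoking the Hodge-equals-slope characterization of the canonical lift (Propositions~\ref{CanLiftViaMukai}/\ref{CanLiftViaMukaiK3}). The paper packages that characterization at the level of the full (odd, resp.\ even) Mukai structure, which absorbs your degree-homogeneity concern, and its Tate-twist conventions avoid any half-twists; your alternative Serre--Tate route for abelian varieties is not used.
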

As a sample application of this theorem, we reprove the following special case of a result from \cite{Honigs}:
\begin{Cor}[Special case of {\cite[Theorem 1.1]{Honigs}; Proposition \ref{HonigsApp}}]\label{MainHonigsApp}
  Let $S$ be a hyperelliptic surface over an algebraically closed field $k$ of characteristic $p>3$, and let $A$ be its canonical cover. Additionally assume that $A$ is ordinary.
    Then any Fourier-Mukai partner of $A$ is isomorphic to $A$ or the dual abelian variety $\widehat{A}$.
\end{Cor}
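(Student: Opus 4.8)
The plan is to combine Theorem~\ref{MainFMPofCanLift} with the known structure of hyperelliptic surfaces and their canonical covers, reducing everything to a statement about Fourier-Mukai partners of abelian surfaces over $\C$ (or over a field of characteristic $0$), where a complete classification is available. Recall that a hyperelliptic surface $S$ over $k$ admits an étale Galois cover $A\to S$ by an abelian surface $A$ with $\Gal(A/S)$ a finite group acting freely; this $A$ is (up to isomorphism) the canonical cover, and the hypothesis $p>3$ guarantees that the order of this group is prime to $p$, so the classification of such covers in characteristic $p$ mirrors the classical one. The extra assumption that $A$ is ordinary is exactly what lets us form the canonical lift $A_{\can}$ over $W(k)$ and apply Theorem~\ref{MainFMPofCanLift}.

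First I would invoke Theorem~\ref{MainFMPofCanLift} to obtain a bijection $\FMP(A_{\can}/W(k))\to\FMP(A/k)$; in particular, every Fourier-Mukai partner $B$ of $A$ over $k$ lifts to a $W(k)$-relative Fourier-Mukai partner $B_{\can}$ of $A_{\can}$. By our Theorem~\ref{MainLift} (applied to identify partners with abelian varieties) together with the fact that $A_{\can}$ is an abelian scheme over $W(k)$, $B_{\can}$ is itself an abelian scheme over $W(k)$ whose special fiber is $B$ and whose generic fiber $B_{\can,\eta}$ is a Fourier-Mukai partner, over the fraction field of $W(k)$, of the generic fiber $A_{\can,\eta}$. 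Next I would base-change to $\C$ (via a chosen embedding of an algebraic closure of $\Frac W(k)$ into $\C$) and invoke the classical classification of Fourier-Mukai partners of abelian varieties — due to Orlov and Polishchuk — which over a field of characteristic $0$ identifies $\FMP$ of an abelian variety $A_{\C}$ with the set of abelian varieties $B_{\C}$ such that $B_{\C}\times\widehat{B_{\C}}\cong A_{\C}\times\widehat{A_{\C}}$ as abelian varieties (equivalently, as symplectic data).

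The crux is then to show that, for $A$ the canonical cover of a hyperelliptic surface, this symplectic-isogeny condition forces $B_{\C}\cong A_{\C}$ or $B_{\C}\cong\widehat{A_{\C}}$. Here I would use the specific geometry: the canonical cover of a hyperelliptic surface is a product $E_1\times E_2$ of two elliptic curves (over $\overline{k}$, after the classification), and one of the factors — the one on which the group acts by translation — carries very little automorphism structure, so the possible direct-sum decompositions of $A_{\C}\times\widehat{A_{\C}}$ into a product of an abelian surface and its dual are severely constrained; a short case analysis (organized by the structure of $\Gal(A/S)$, which by Bagnera–de Franchis is one of seven explicit finite groups) shows the only solutions are $A_{\C}$ and $\widehat{A_{\C}}$. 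Finally I would descend: since $B_{\can}$ is an abelian scheme over $W(k)$ with $B_{\can,\eta}$ geometrically isomorphic to $A_{\can,\eta}$ or $\widehat{A_{\can,\eta}}$, and $W(k)$ is a complete DVR with algebraically closed residue field, the special fiber $B$ is isomorphic to $A$ or $\widehat{A}$ respectively. The main obstacle I anticipate is the characteristic-$0$ classification step: one must be careful that the relevant statement is about \emph{Fourier-Mukai partners} (derived equivalence), not merely isogeny, and cite the precise form — for abelian varieties over an algebraically closed field of characteristic $0$, $D^b(A_{\C})\simeq D^b(B_{\C})$ iff there is a symplectic isomorphism $A_{\C}\times\widehat{A_{\C}}\cong B_{\C}\times\widehat{B_{\C}}$ — and then execute the case analysis cleanly; the lifting and descent bookkeeping is comparatively routine given the theorems already established.
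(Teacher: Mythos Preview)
Your strategy --- lift to $W(k)$, base-change to $\C$, apply the characteristic-zero classification, and descend --- is exactly the paper's. Two points need correction, however.

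First, the canonical cover $A$ is not in general a product of two elliptic curves. The Bagnera--de Franchis classification (as used in \cite{Honigs}) yields three possibilities: $A\cong E\times F$; $A$ is a degree-$2$ \'etale cyclic cover of some $E\times F$; or $A$ is a degree-$3$ \'etale cyclic cover of some $E\times F$ with $F$ admitting an order-$3$ automorphism. The paper's argument shows that $(A_{\can})_\C$ inherits the corresponding structure --- by lifting the covering homomorphism and its kernel via the functoriality of canonical lifts for ordinary abelian varieties, and in the third case lifting the order-$3$ automorphism of $F$ to force $\rho((F_{\can})_\C)\in\{2,4\}$ --- and then invokes Sosna's result \cite{Sosna}, which handles precisely these three cases. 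Your proposed direct case analysis via the Orlov--Polishchuk symplectic criterion is viable in principle, but you must work with this trichotomy rather than assume $A$ is a product.

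Second, your descent step is too brief: knowing $(B_{\can})_\C\cong(A_{\can})_\C$ (or its dual) does not immediately yield $B\cong A$ over $k$, since $\C$ is reached from $W(k)$ via an embedding of the fraction field, not as a fiber. The paper handles this by spreading out and applying the specialization argument of \cite[Lemma 6.5]{LO} (stated there for K3 surfaces but valid for abelian varieties by the same proof).
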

Here, a \textbf{hyperelliptic surface} means a smooth projective minimal surface with the properties that the canonical divisor is numerically trivial, $b_2=2$ and the fibers of the Albanese map are (smooth) elliptic curves. Since the canonical bundle of such a surface $S$ has a finite order, by a standard construction we can produce an \'etale cyclic cover $A$ of $S$ which trivializes the canonical bundle, which is called the \textbf{canonical cover of $S$} (\cite[\S7.3]{HuyFMT}). It is known that the canonical cover of a hyperelliptic surface is an abelian surface (see
\cite[\S4]{Honigs} for details).

The proof of Theorem \ref{MainFMPofCanLift} for the case of abelian varieties uses the following fact which is interesting in its own right:
\begin{Th}[Fourier-Mukai partners of abelian varieties; Proposition \ref{FMPAV}]\label{FMPofAVisAV}
   Let $k$ be an algebraically closed field of arbitrary characteristic, and let $X$ be an abelian variety over $k$. Then, any (smooth projective) Fourier-Mukai partner of $X$ is again an abelian variety.
\end{Th}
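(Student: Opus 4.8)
The plan is to exploit two invariants preserved under Fourier--Mukai equivalence: the dimension, and the structure of the (full) group of autoequivalences, or more precisely the action on cohomological invariants. Let $Y$ be a smooth projective variety with $\Dbcoh(Y)\iso\Dbcoh(X)$. First I would recall that a Fourier--Mukai equivalence preserves the dimension and the Kodaira dimension when the canonical bundle is suitably behaved; since $\omega_X\iso\OCal_X$, the variety $Y$ has trivial (or at least torsion) canonical bundle. In fact, for an abelian variety the canonical bundle is trivial and $\omega_X$ is fixed by every autoequivalence, so the same holds for $Y$: one gets $\omega_Y$ torsion, hence (after checking it is actually trivial, e.g.\ via the fact that $\chi(\OCal_Y)$ and the Hilbert polynomial with respect to a Fourier--Mukai-transported polarization are controlled) $Y$ has numerically trivial canonical class. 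By the classification of such varieties, $Y$ is, up to a finite \'etale cover that is a product of an abelian variety with a Calabi--Yau/hyperk\"ahler-type factor, built out of an abelian variety. The crux is to rule out any non-abelian factor.

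The key input I would use is the computation of the graded Hochschild/Hochschild--Kostant--Rosenberg cohomology: $\bigoplus_{p,q} H^q(Y,\wedge^p T_Y)$ is a derived invariant (it is the Hochschild cohomology, up to the HKR isomorphism, valid here since we are in characteristic $0$ for this factor or can reduce to it), and for an abelian variety $X$ of dimension $g$ this ring is an exterior algebra on $H^1(X,\OCal_X)\oplus H^0(X,T_X)$, both of dimension $g$. In particular $\dim H^0(Y,T_Y)=g=\dim Y$, which forces $T_Y$ to be globally generated with enough sections that the Albanese-type or the "$T_Y$ trivial" argument applies: a smooth projective variety of dimension $g$ with $h^0(T_Y)\ge g$ and $\omega_Y$ trivial has trivial tangent bundle, hence (Wang's theorem / the classification of parallelisable varieties in char $0$) is an abelian variety. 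One must be a little careful in positive and mixed characteristic, where HKR can fail; there I would instead argue via the first Hochschild cohomology $HH^1(Y)=H^0(Y,T_Y)\oplus H^1(Y,\OCal_Y)$ being $2g$-dimensional together with the fact that $\Pic^0_Y$ and $\Alb(Y)$ have the expected dimension $g$, and use that a smooth projective variety whose Albanese map is finite onto an abelian variety of the same dimension, and which has trivial canonical bundle, is abelian (the Albanese is then an isomorphism by purity/ramification considerations).

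Concretely the steps are: (1) show $\dim Y=\dim X=:g$ and $\omega_Y$ is numerically trivial, using that Fourier--Mukai equivalence carries the point-like objects and the Serre functor across; (2) compute that the Hochschild homology/cohomology forces $\dim \Alb(Y)=g$ and $\dim\Pic^0_Y=g$, so the Albanese morphism $a_Y\colon Y\to\Alb(Y)$ is a morphism between smooth projective varieties of the same dimension $g$; (3) show $a_Y$ is finite (no positive-dimensional fibres), e.g.\ because a fibre would contribute to $H^0(T_Y)$ relative directions beyond the $g$ already accounted for, or because $\omega_Y$ trivial plus $a_Y^*\omega_{\Alb(Y)}=\omega_{\Alb(Y)}$ trivial forces the relative dualising sheaf to be trivial; (4) conclude $a_Y$ is \'etale (relative dualising sheaf trivial, so unramified in char $0$; in char $p$ rule out inseparability using ordinariness or the fact that derived equivalence preserves the $p$-rank / the action on crystalline cohomology), hence an isomorphism since $\Alb(Y)$ is simply connected in the relevant sense only after noting $Y\to\Alb(Y)$ \'etale of degree forced to be $1$ by comparing $\chi$ or $\pi_1$.

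The main obstacle is Step (3)--(4): controlling the Albanese morphism and showing it is an isomorphism rather than merely an isogeny-like finite cover, and handling the characteristic $p$ subtlety where Hochschild--Kostant--Rosenberg is unavailable and where one must instead feed in the derived invariance of crystalline cohomology together with the dimension count on $\Pic^0$ and $\Alb$. I expect the cleanest route is to invoke that a smooth projective variety with trivial canonical bundle and surjective Albanese map of relative dimension $0$ is automatically an abelian variety once one knows the Albanese is \'etale, and to obtain \'etaleness from triviality of the relative cotangent complex, which follows from the two triviality statements for $\omega_Y$ and $\omega_{\Alb(Y)}$ plus flatness. Everything before that is a formal consequence of the derived invariance of Hochschild (co)homology and of the Serre functor.
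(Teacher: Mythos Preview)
Your proposal has genuine gaps, and the paper's argument is substantially different from yours.

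The paper does not go through the Albanese map or Hochschild cohomology at all. Its key input is the derived invariance of $\Pic^0\times\Aut^0$ as a group scheme (Rouquier, Popa--Schnell), together with the separate equality of dimensions $\dim\Pic^0$ and $\dim\Aut^0$; this holds in every characteristic. For an abelian variety $A$ one has $\Pic^0_A\times\Aut^0_A\cong \widehat{A}\times A$, so for your partner $X$ the identity component $G:=\Aut^0_X$ is an abelian variety of dimension $g=\dim X$. One then studies the orbit map $f:G\to X$, $g\mapsto g\cdot x$. The rigidity lemma forces the stabilizer $S$ to be finite \'etale, so $f$ is finite \'etale surjective. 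Finally, because $G$ is an \emph{abelian} variety, any $g\in S(k)$ commutes with every $h\in G(k)$, hence fixes every point of $X=f(G)$, so $g=\id$; thus $S$ is trivial and $f$ is an isomorphism.

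Your route has real problems that this avoids. In characteristic $p$ HKR is unavailable, and the splitting $HH^1(X)\cong H^0(T_X)\oplus H^1(\OCal_X)$ you invoke \emph{is} an HKR-type statement; without it you cannot read off $h^0(T_X)$, $h^1(\OCal_X)$, or $\dim\Pic^0_X$ separately from Hochschild invariants. Your appeal to ``derived invariance of crystalline cohomology together with the dimension count on $\Pic^0$ and $\Alb$'' is exactly the missing step; the paper supplies it via Popa--Schnell (whose proof the paper notes goes through in positive characteristic). Even granting $\dim\Alb(X)=g$, your steps (3)--(4) are not proofs: triviality of the relative dualizing sheaf does not rule out inseparability in characteristic $p$, your surjectivity/finiteness arguments for the Albanese map are sketches, and abelian varieties are far from simply connected, so ``degree forced to be $1$ by $\pi_1$'' is false as stated (though an \'etale cover of an abelian variety \emph{is} an abelian variety, which would rescue the endgame if you got there). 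The decisive missing idea is to replace the Albanese map by the orbit map of $\Aut^0_X$, which the Rouquier/Popa--Schnell input hands you as an abelian variety of the correct dimension acting on $X$.
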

This result is proved in \cite{Huy} for the case $k=\C$ with the use of Beauville-Bogomolov decomposition, but our proof in \S\ref{secFMPofAV} appears new in positive characteristics. 

We will focus on ordinary abelian varieties and ordinary K3 surfaces in this paper, but there are many other varieties which are now known to admit the theory of canonical lifts. 
For instance, it has been known for some time that (algebraizable) canonical lifts exist for ordinary varieties with trivial tangent bundles
(\cite{MS}). More recently, Brantner and Taelman constructed (algebraizable) canonical lifts for a more general class of Calabi-Yau varieties, so one could ask if the main results from this paper generalize to these varieties. 
Here is the precise statement about the existence of canonical lifts proved by Brantner and Taelman:
\begin{Th}[Brantner Taelman, {\cite[Theorems B,C]{BranTael}}]\label{GeneralCanLift}
Let $k$ be a perfect field of characteristic $p>0.$ Let $X$ be a smooth projective geometrically irreducible Calabi-Yau $k$-variety.
We additionally assume
that $X$ is Bloch-Kato 2-ordinary and that $H^{\dim X}_{\et}(X_{\overline{k}},\integ_p)$ is torsion-free.
Then, $X$ has a canonical lift $X_{\can}$ over $\Spec W(k)$.
\end{Th}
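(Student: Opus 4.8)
The plan is to realize the lift through derived deformation theory, generalizing the canonical-coordinates construction of Deligne recalled in the introduction. First I would encode the deformations of $X$ over Artinian local $W(k)$-algebras with residue field $k$ as a formal moduli problem $\mathrm{Def}_X$. In characteristic $p$ the Lurie--Pridham correspondence between formal moduli problems and differential graded Lie algebras is unavailable, so I would invoke the \emph{partition Lie algebra} formalism of Brantner--Mathew: $\mathrm{Def}_X$ is controlled by a partition Lie algebra whose underlying complex is the (shifted) tangent complex $R\Gamma(X,T_X)$, where $T_X$ is the tangent sheaf. The tangent space to the lifting problem is $H^1(X,T_X)$ and obstructions lie in $H^2(X,T_X)$; the Calabi--Yau hypothesis lets me rewrite both via the trivialization $T_X\simeq\Omega^{\dim X-1}_X$ in terms of Hodge cohomology, which is exactly the data that the Bloch--Kato ordinarity condition constrains.

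The heart of the argument is to produce a \emph{canonical} $W(k)$-point of the resulting formal deformation space. Here I would use the prismatic cohomology of $X$ (equivalently, for the lift, the crystalline/de Rham cohomology) together with its $\Frob$-action. Bloch--Kato $2$-ordinarity is precisely the condition guaranteeing that the relevant Frobenius slopes are integral and that the Hodge filtration on $H^*_{\cris}$ admits a canonical Frobenius-stable splitting in the degrees governing first- and second-order deformations. Transporting this splitting through the Bhatt--Lurie prismatization and the $\mathbb{G}_m$-action on the filtered de Rham complex (the ``filtered circle'') endows the formal deformation space with canonical coordinates, giving it a formal-group structure whose origin is a distinguished lift; this is the direct analogue of the Serre--Tate origin used for ordinary abelian varieties. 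Order by order one obtains a compatible system of lifts over $W_n(k)$, and passing to the limit yields a formal lift $\widehat{X}_{\can}$ over $\mathrm{Spf}\,W(k)$ characterized by its Frobenius-equivariant descent data.

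Finally I would algebraize $\widehat{X}_{\can}$ to an honest projective $W(k)$-scheme $X_{\can}$. The canonical bundle is trivial, so I need a polarization to deform: the obstruction to lifting an ample class lives in $H^2(X,\OCal_X)$, and ordinarity forces $c_1$ of the ample bundle to remain of Hodge type along the canonical coordinates, so the bundle extends over $\widehat{X}_{\can}$. The torsion-freeness of $H^{\dim X}_{\et}(X_{\overline{k}},\integ_p)$ ensures the crystalline--\'etale comparison is well-behaved and that no integral pathology obstructs this extension. Grothendieck's existence theorem (formal GAGA) then algebraizes the formal scheme together with its ample line bundle, producing $X_{\can}$.

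The main obstacle I expect is the second paragraph: showing that the Frobenius-stable Hodge splitting furnished by $2$-ordinarity genuinely integrates, compatibly at every finite level, to a point of the derived deformation functor, rather than merely supplying first-order data. Controlling this requires a fine grip on the partition Lie algebra structure on $R\Gamma(X,T_X)$ in characteristic $p$ and on the $\mathbb{G}_m$-equivariance matching the conjugate filtration; it is exactly at the second-order obstruction in $H^2$ that the hypothesis of $2$-ordinarity, as opposed to mere ordinarity, becomes indispensable.
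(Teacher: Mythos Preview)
The paper does not prove this theorem at all: it is stated purely as a citation of \cite[Theorems B, C]{BranTael}, with no argument given. There is therefore nothing in the present paper to compare your proposal against. Your sketch is a plausible high-level outline of the Brantner--Taelman approach itself (partition Lie algebras controlling derived deformations, a Frobenius splitting of the Hodge filtration under the ordinarity hypothesis yielding canonical coordinates, followed by algebraization), but that belongs to a review of \cite{BranTael}, not of this paper. For the purposes of this paper the theorem is a black box, and you should treat it as such.
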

Motivated by Theorems \ref{MainLift} and \ref{MainFMPofCanLift}, we propose:
\begin{Question}
Let $X,Y$ be two varieties over a perfect field $k$ of characteristic $p>0$, satisfying the assumptions of Theorem \ref{GeneralCanLift}.
If a kernel $E\in\Dbcoh(X\times_kY)$ defines a derived equivalence, 
does $E$ always lift to a perfect complex $\ECal\in\DperfAbs(X_{\can}\times_{W(k)}Y_{\can})$, thereby defining a $W(k)$-relative Fourier-Mukai equivalence between $X_{\can}$ and $Y_{\can}$?
\end{Question}
\begin{Question}\label{MainQuestion2}
Let $X$ be a variety over a perfect field $k$ of characteristic $p>0$, satisfying the assumptions of Theorem \ref{GeneralCanLift}.
Then does 
the restriction map to the special fiber
\[
\FMP(X_{\can}/W(k))\longrightarrow \FMP(X/k)
\] 
 define a bijection?

\end{Question}
\begin{Rem}
    Assuming for a moment that $k$ is algebraically closed, if the answer to Question \ref{MainQuestion2} is yes, it will imply that $\FMP(X_{\can}/W(k))$ is a countable set, as this is known for $\FMP(X/k)$ (\cite[\href{https://stacks.math.columbia.edu/tag/0G11}{Tag 0G11}]{stacks-project}, \cite{Toen}). After completing the draft for this paper, the author found a direct proof of this countability result (\cite{Kurama})\footnote{We are considering only smooth projective schemes over the ring of Witt vectors here, but the main result of \cite{Kurama} is much more general and concerns smooth proper schemes over a noetherian base.}, so it provides some positive evidence toward the question.
\end{Rem}
After completing the draft for this paper, the author was notified of Rienks's recent paper on arXiv (\cite{Rien}) which can be used to prove Theorem \ref{MainLift} under a certain characteristic assumption. (Namely, \cite[Corollary 1.6]{Rien} can be applied given that the characteristic $p>2$ is greater than twice the dimension of the variety).
One feature of the results in \cite{Rien} is that they also apply to Calabi-Yau varieites other than abelian varieties and K3 surfaces, and
the author hopes to address the above two questions in future works using similar techniques.

\subsection*{Organization of the paper}
\S\ref{secRFMT} recalls some basic facts about Fourier-Mukai transforms relative to a base.
\S\ref{secFMPofAV} is devoted to the proof of Theorem \ref{FMPofAVisAV}.
After this, the paper will address Theorem \ref{MainLift} 
(cf.\ Proposition \ref{CanLiftAVKer}, Proposition \ref{K3LiftKer}),
Theorem \ref{MainFMPofCanLift} 
(cf.\ Proposition \ref{FMPofCanLiftAV}, Proposition \ref{FMPofCanLiftK3}) 
and Corollary \ref{MainLiftCor}
(cf.\ Proposition \ref{padicVarHdgAV}). We will first explain the abelian variety case in details (\S\ref{secAV}) and then 
show that an analogous argument works for K3 surfaces rather briefly (\S\ref{secK3}).
Section \ref{secHonigs} will prove Corollary \ref{MainHonigsApp} as an application of Theorem \ref{MainLift}. The appendix \S \ref{secChern} serves to clarify the compatibility between the de Rham and crystalline Chern classes. 
\subsection*{Conventions}
A variety over a field $L$ is a finite type $L$-scheme. 

For a perfect field $k$ of positive characteristic, $W(k)$ denotes its ring of Witt vectors. When $k$ is clear from context, we also write $W\defeq W(k), K\defeq \Frac(W(k))$.
When $k$ is a perfect field of positive characteristic and $X$ is a smooth projective $k$-scheme, its $i$-th crystalline cohomology is denoted by
$H_{\cris}^i(X/W)$. We also define its rational version $H_{\cris}^i(X/K)\defeq H_{\cris}^i(X/W)\tens_{W}K$.

For a $W(k)$-scheme  $\XCal$, its special fiber is denoted by $\XCal_s$, and its generic fiber (resp.\ its geometric generic fiber) is denoted by $\XCal_\eta$ (resp.\ $\XCal_{\overline{\eta}}$).

For a noetherian scheme $X$, $\Dbcoh(X)$ and $\DperfAbs(X)$ denote the bounded derived category of coherent sheaves on $X$ and the category of perfect complexes on $X$. 

For a morphism $X\longrightarrow S$ of schemes, its $i$-th relative de Rham cohomology sheaf is denoted by $\HCal_{\dR}^i(X/S)(=\HCal^i(Rf_*\Omega_{X/S}^\bullet))$. If $S$ is affine, we also write it as $H_{\dR}^i(X/S).$
\subsection*{Acknowledgement}
The author would like to thank his advisor Alexander Perry for continual support, helpful  discussions and suggestions.  
The author thanks Sean Cotner, Hyunsuk Kim, Martin Olsson, Saket Shah and Gleb Terentiuk for helpful discussions and interest. The author also thanks Sean Cotner for suggesting a slight simplification in the proof of Proposition \ref{FMPAV}. The author thanks the reviewer for helpful comments.

During the preparation of this paper, the author was partially supported by NSF grant DMS-2052750.
\section{Relative Fourier-Mukai transform}\label{secRFMT}
We fix a noetherian base scheme $S$ throughout this section. The results from this section are more general than what is needed for the remainder of the paper (in fact we will only need the case of smooth projective schemes over the base $S=\Spec W(k)$ or $S=\Spec k$).

\begin{Def}[Relative Fourier-Mukai transform]
Given two smooth proper $S$-schemes $\XCal,\YCal$ and 
$\ECal\in \DperfAbs
(\XCal\times_S\YCal)$, we define the associated \textbf{relative Fourier-Mukai transform} $\Phi_\ECal:\Dbcoh(\XCal)\longrightarrow\Dbcoh(\YCal)$ by the formula:
\[
\Phi_\ECal(-)=Rq_*(Lp^*(-)\tens^L \ECal)
\] where the maps $p,q$ are the projections $p:\XCal\times_S\YCal\longrightarrow \XCal,q:\XCal\times_S\YCal\longrightarrow\YCal$. The map $q$ is a proper morphism of noetherian schemes, so $Rq_*$ respects the bounded derived categories (even if the relevant schemes have infinite Krull dimensions), and $\Phi_\ECal$ is well-defined. 
$\ECal$ in this context is called the \textbf{kernel} of the relative Fourier-Mukai transform.
\end{Def}
\begin{Rem}
Note that by noetherianity, any perfect complex on $\XCal\times_S\YCal$ is bounded and hence lies in $\Dbcoh(\XCal\times_S\YCal)$ (\cite[\href{https://stacks.math.columbia.edu/tag/08E8}{Tag 08E8}]{stacks-project},
\cite[\href{https://stacks.math.columbia.edu/tag/08JL}{Tag 08JL}]{stacks-project}).
In a more general setting, it's natural to work with $\XCal$-relatively perfect complexes to define Fourier-Mukai transforms (\cite[Proposition 2.7]{RMS}). However, perfect complexes suffice in our setting due to smoothness (\cite[Proposition 3.12]{TLS}).
\end{Rem}
Let $\XCal,\YCal,\ZCal$ be smooth proper $S$-schemes. Given perfect complexes on two fiber products $\ECal\in\DperfAbs(\XCal\times_S\YCal),\FCal\in\DperfAbs(\YCal\times_S\ZCal)$, recall that we have the composition $\FCal\circ\ECal\defeq Rp_{1,3*}(Lp_{1,2}^*\ECal\tens^LLp_{2,3}^*\FCal)\in\DperfAbs(\XCal\times_S\ZCal)$ where $p_{i,j}$ is the projection to the $(i,j)$ components of the product $\XCal\times_S\YCal\times_S\ZCal$ (the composition is a perfect complex by \cite[\href{https://stacks.math.columbia.edu/tag/0DJT}{Tag 0DJT}]{stacks-project} for instance). As the name suggests, the composition $\FCal\circ\ECal$ has the property that $\Phi_{\FCal\circ \ECal}=\Phi_{\FCal}\circ\Phi_{\ECal}.$
\begin{Prop/Def}\label{RelEquiv}
Let $\XCal,\YCal$ be two smooth proper $S$-schemes and choose an object
$\ECal\in \DperfAbs
(\XCal\times_S\YCal)$. The following two conditions are equivalent:
\begin{enumerate}
    \item For each point $s\in S$, the object 
    \[\ECal_s\defeq \ECal|_{(\XCal\times_S\YCal)\times_S\{s\}}\in \DperfAbs(\XCal_s\times_{\kappa(s)}\YCal_s)\] defines an equivalence of categories $\Phi_{\ECal_s}:\Dbcoh(\XCal_s)\longrightarrow\Dbcoh(\YCal_s)$.
    \item There exists another kernel $\FCal\in\DperfAbs(\YCal\times_S\XCal)$ such that 
    \[\ECal\circ \FCal\iso\OCal_{\Delta_\YCal},\quad 
    \FCal\circ \ECal\iso\OCal_{\Delta_{\XCal}}.\]
\end{enumerate}
A functor of form $\Phi_\ECal:\Dbcoh(\XCal)\longrightarrow \Dbcoh(\YCal)$ with $\ECal$ satisfying these equivalent conditions is called an \textbf{$S$-relative Fourier-Mukai equivalence.} 
Two smooth projective $S$-schemes $\XCal,\YCal$ which admit such a kernel are called \textbf{$S$-relative Fourier-Mukai partners}. Given a smooth projective $S$-scheme $\XCal$, we write $\FMP(\XCal/S)$ to denote the set of isomorphism classes of its $S$-relative Fourier-Mukai partners.
\end{Prop/Def}
\begin{proof}
 If we assume (2), the same statement holds after base changing along any $\{s\}\longrightarrow S$. In particular, the functor $\Phi_{\FCal_s}:\Dbcoh(\YCal_s)\longrightarrow\Dbcoh(\XCal_s)$ defines a quasi-inverse to $\Phi_{\ECal_s}$. This implies (1).

 Conversely, assume (1). We define kernels 
 \[\ECal_L\defeq R\HomSh_{\XCal\times_S \YCal}(\ECal,p^\times\OCal_\XCal),\quad
 \ECal_R\defeq R\HomSh_{\XCal\times_S \YCal}(\ECal,q^\times\OCal_\YCal)\] where 
$p^\times$ denotes the right adjoint to $Rp_*$ (and similarly for $q^\times$) as in \cite[\S3]{Neeman}.
Since the structure morphisms are assumed smooth proper, $p^\times\OCal_\XCal$ is locally a line bundle with an appropriate cohomological shift (\cite[Theorem 3.2.1]{Neeman}). Formal computations using Grothendieck duality, $p^\times(-)\iso Lp^*(-)\tens p^\times\OCal_X$ (see \cite[Remark 3.1.5]{Neeman}) and $\id\iso R\HomSh(-,R\HomSh(-,p^\times\OCal_X),p^\times\OCal_X)$ (and the similar results for $q$) show that $\Phi_{\ECal_L}$ and $\Phi_{\ECal_R}$ define the left and right adjoints of $\Phi_\ECal$.
 Also, the unit and counit of the adjunctions exist at the level of kernels (\cite[\S3]{LO}):
 \[
 \OCal_{\Delta_\YCal}
 \longrightarrow \ECal\circ\ECal_R, \quad  \ECal_L\circ \ECal\longrightarrow \OCal_{\Delta_\XCal}.
 \]
 Let $C_1,C_2$ be the cones of these two morphisms. 
 As in \cite[\S3]{LO}, when the base $S$ is a field, $\Phi_\ECal$ defines an equivalence if and only if these two maps are quasi-isomorphisms. Since the formation of these two maps are compatible with the base change of $S$, (1) implies that $C_1$ and $C_2$ vanish for all fibers above $S$. Then, the support of $C_1$ and $C_2$ are empty, i.e. they are trivial. Thus, the unit and counit define quasi-isomorphisms. We then see
 \[
 \ECal_L=\ECal_L\circ\OCal_{\Delta_\YCal}\iso \ECal_L\circ \ECal\circ \ECal_R\iso \OCal_{\Delta_\XCal} \circ\ECal_R=\ECal_R,
 \] so that we obtain (2).
\end{proof}
\begin{Rem}
While the above definition applies to any smooth proper schemes over $S$, we will only consider Fourier-Mukai partners that are  
smooth projective over $S$ in this paper. 
\end{Rem}
\begin{Rem}\label{baseChangeFMP}
Fourier-Mukai equivalence is stable under base change. More precisely, given $\ECal\in\DperfAbs(\XCal\times_S\YCal)$ defining a $S$-relative Fourier-Mukai equivalence, for any morphism $T\longrightarrow S$ of noetherian schemes, the pullback of $\ECal$ to $\XCal_T\times_T\YCal_T$ defines a $T$-relative Fouirier Mukai equivalence between $\XCal_T$ and $\YCal_T$.
\end{Rem}
The following lemma should be well-known, but we include a proof sketch for completeness:
\begin{Lemma}[Fourier-Mukai equivalence is an open condition]\label{liftEquiv} Let $\XCal,\YCal$ be smooth projective over $S$, and choose $\ECal\in\DperfAbs(\XCal\times_S\YCal)$.
If the restriction of $\ECal$ to the fiber over $s\in S$ defines an equivalence, there is an open neighborhood $U\subset S$ of $s$ such that the restriction of $\ECal$ to $\XCal_U\times_U\YCal_U$ defines a $U$-relative Fourier-Mukai equivalence.
\end{Lemma}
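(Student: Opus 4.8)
The plan is to reuse the explicit left/right adjoint kernels $\ECal_L,\ECal_R$ constructed in the proof of Proposition/Definition \ref{RelEquiv}, together with the unit and counit morphisms of kernels
\[
\OCal_{\Delta_\YCal}\longrightarrow \ECal\circ\ECal_R,\qquad \ECal_L\circ\ECal\longrightarrow\OCal_{\Delta_\XCal}
\]
supplied there (following \cite[\S3]{LO}). Let $C_1\in\DperfAbs(\YCal\times_S\YCal)$ and $C_2\in\DperfAbs(\XCal\times_S\XCal)$ be the cones of these two maps. Since $\XCal,\YCal$ are smooth and proper over $S$, the objects $\ECal_L,\ECal_R,\OCal_{\Delta_\XCal},\OCal_{\Delta_\YCal}$, their compositions, and hence $C_1,C_2$, are perfect complexes, so in particular bounded with coherent cohomology, and $\supp{C_1},\supp{C_2}$ are closed.

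The two ingredients I would then invoke are: (a) the formation of $\ECal_L,\ECal_R$, of kernel compositions, of the unit/counit, and therefore of $C_1,C_2$, commutes with base change along any morphism $T\to S$ (as in Remark \ref{baseChangeFMP} and the proof of Proposition/Definition \ref{RelEquiv}); and (b) over a field, $\Phi_\ECal$ is an equivalence if and only if the unit and counit are quasi-isomorphisms, equivalently $C_1,C_2$ vanish (\cite[\S3]{LO}). Applying (a) and (b) over the residue field $\kappa(s)$, the hypothesis that $\Phi_{\ECal_s}$ is an equivalence forces $C_1|_{(\YCal\times_S\YCal)_s}=0$ and $C_2|_{(\XCal\times_S\XCal)_s}=0$. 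Since a perfect complex whose derived restriction to the fiber over $s$ vanishes has support disjoint from that fiber, this says precisely that $s$ lies in neither of the subsets $g_1(\supp{C_1})\subseteq S$ and $g_2(\supp{C_2})\subseteq S$, where $g_1\colon\YCal\times_S\YCal\to S$ and $g_2\colon\XCal\times_S\XCal\to S$ are the structure maps. These maps are proper (as $\XCal,\YCal$ are proper over $S$), so $g_1(\supp{C_1})$ and $g_2(\supp{C_2})$ are closed in $S$.

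Now set $U\defeq S\setminus\big(g_1(\supp{C_1})\cup g_2(\supp{C_2})\big)$, an open neighborhood of $s$. Over $U$ both cones vanish, so the restricted unit and counit are isomorphisms of kernels, and the same formal manipulation as at the end of the proof of Proposition/Definition \ref{RelEquiv},
\[
\ECal_L|_U\iso \ECal_L|_U\circ\OCal_{\Delta_\YCal}\iso \ECal_L|_U\circ\ECal|_U\circ\ECal_R|_U\iso \OCal_{\Delta_\XCal}\circ\ECal_R|_U\iso \ECal_R|_U,
\]
exhibits $\ECal_L|_U$ as a two-sided inverse kernel to $\ECal|_U$. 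Hence $\ECal|_U$ satisfies condition (2) of Proposition/Definition \ref{RelEquiv}, i.e.\ it defines a $U$-relative Fourier-Mukai equivalence, as desired.

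The step I expect to be the crux is (b), the field-case criterion that $\Phi_\ECal$ is an equivalence exactly when the two cones vanish; this I would simply cite from \cite[\S3]{LO} (it is already used in the proof of Proposition/Definition \ref{RelEquiv}). Everything else — that cone formation is compatible with base change, that a perfect complex vanishing on a fiber has support missing that fiber, and that properness turns the bad locus into a closed subset of $S$ — is routine, though it must be checked carefully. A semicontinuity argument applied directly to the relevant $\Ext$-sheaves would be an alternative, but this kernel-level argument is shorter and reuses the machinery already in place.
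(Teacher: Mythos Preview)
Your proposal is correct and follows essentially the same argument as the paper: form the cones $C_1,C_2$ of the kernel-level unit and counit, use \cite[\S3]{LO} to see they vanish on the fiber over $s$, and take $U$ to be the complement of the (closed, by properness) image of their supports. Your version is in fact slightly more careful in distinguishing the structure maps $g_1\colon\YCal\times_S\YCal\to S$ and $g_2\colon\XCal\times_S\XCal\to S$ (the paper writes a single $\pi\colon\XCal\times_S\YCal\to S$, which is a minor slip), and in spelling out why condition (2) of Proposition/Definition \ref{RelEquiv} holds over $U$.
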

\begin{proof}
As in Proposition/Definition \ref{RelEquiv}, 
we look at the exact triangles
 \[
 \OCal_{\Delta_\YCal}
 \longrightarrow \ECal\circ\ECal_R\longrightarrow C_1\longrightarrow , \quad  \ECal_L\circ \ECal\longrightarrow \OCal_{\Delta_\XCal}\longrightarrow C_2\longrightarrow .
 \]
 Again, by \cite[\S3]{LO}, the restrictions to fibers $(C_1)_s,(C_2)_s$ vanish due to our hypothesis. Then, $\supp{C_1},\supp{C_2}$
 avoid $\pi\inv(s)$ where $\pi:\XCal\times_S\YCal\longrightarrow S$ is the structure map. 
 These supports are closed, so that by properness, $\pi(\supp{C_1}\cup\supp{C_2})\subset S$ is a closed subset not containing the point $s$. We can set our open neighborhood to be $U=S\setminus \pi(\supp{C_1}\cup\supp{C_2})$.
\end{proof}
\begin{Cor}\label{LiftEquiv}
    Let $\XCal,\YCal$ be smooth proper $S$-schemes, where $S$ is the spectrum of a noetherian local ring with the unique closed point $s\in S$. If the restriction of a perfect complex $\ECal\in\DperfAbs(\XCal\times_S\YCal)$ to the fiber above $s$ defines a Fourier-Mukai equivalence, $\ECal$ defines a Fourier-Mukai equivalence relative to $S$.
\end{Cor}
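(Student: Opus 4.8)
The plan is to deduce this directly from Lemma \ref{liftEquiv}, once one makes explicit an elementary topological fact: since $S$ is the spectrum of a noetherian local ring, the only open subset of $S$ containing the closed point $s$ is $S$ itself. Indeed, a closed subset $V(I)\subset S$ fails to contain $s$ exactly when $I$ is not contained in the maximal ideal, i.e.\ when $I$ is the unit ideal, so that $V(I)=\emptyset$. Hence, granting that Lemma \ref{liftEquiv} applies, the open neighborhood $U$ it produces must equal $S$, and $\ECal$ defines an $S$-relative Fourier-Mukai equivalence.

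The only subtlety is that Lemma \ref{liftEquiv} was phrased for smooth \emph{projective} $S$-schemes, whereas here $\XCal,\YCal$ are merely smooth \emph{proper}. Inspecting that proof, together with the proof of Proposition/Definition \ref{RelEquiv}, one checks that projectivity is never used: the adjoint kernels $\ECal_L,\ECal_R$ and the unit/counit triangles $\OCal_{\Delta_\YCal}\to\ECal\circ\ECal_R\to C_1\to$ and $\ECal_L\circ\ECal\to\OCal_{\Delta_\XCal}\to C_2\to$ require only smoothness and properness of the structure morphisms (via Grothendieck duality and \cite[Theorem 3.2.1]{Neeman}), and the conclusion that $\pi(\supp{C_1}\cup\supp{C_2})\subset S$ is closed uses only properness of $\pi$. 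So I would simply re-run that argument in the present setting: by the field case of \cite[\S3]{LO}, the hypothesis that $\ECal_s$ defines an equivalence forces $(C_1)_s=(C_2)_s=0$, so $\supp{C_1}\cup\supp{C_2}$ is disjoint from the fiber $\pi^{-1}(s)$; by properness its image in $S$ is then a closed subset avoiding $s$, which by the local hypothesis must be empty; hence $C_1=C_2=0$.

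With the unit and counit now isomorphisms, the formal manipulation from the proof of Proposition/Definition \ref{RelEquiv} yields $\ECal_L\iso\ECal_L\circ\OCal_{\Delta_\YCal}\iso\ECal_L\circ\ECal\circ\ECal_R\iso\OCal_{\Delta_\XCal}\circ\ECal_R\iso\ECal_R$, so $\FCal\defeq\ECal_R$ is a two-sided inverse kernel and $\ECal$ satisfies condition (2) of Proposition/Definition \ref{RelEquiv}, i.e.\ defines an $S$-relative Fourier-Mukai equivalence. I do not expect a genuine obstacle here; the only points requiring care are recording the topological observation that collapses "open neighborhood of $s$" to "all of $S$", and confirming that the earlier arguments do not secretly use projectivity.
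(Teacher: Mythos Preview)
Your proposal is correct and follows the same approach the paper intends: the corollary is stated without proof because it is meant to be an immediate consequence of Lemma \ref{liftEquiv} together with the observation that the only open neighborhood of the closed point in $\Spec$ of a local ring is the whole space. You were more careful than the paper in flagging and resolving the projective-versus-proper discrepancy between the lemma and the corollary; your verification that the proof of Lemma \ref{liftEquiv} only uses properness is accurate.
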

\section{Fourier-Mukai partners of abelian varieties}\label{secFMPofAV}
Let $k$ be an algebraically closed field of arbitrary characteristic. All varieties in this section are defined over $k$.
\begin{Prop}[Adaptation of {\cite[Th\'eor\'eme 4.18]{Rou}} and {\cite[Corollary B]{PS}} to all characteristics]\label{RouPopaSch}
    Let $X$ and $Y$ be derived equivalent smooth projective connected varieties over $k$. Then, we have an isomorphism of group schemes $\Pic^0_{X/k}\times\Aut^0_{X/k}\iso \Pic^0_{Y/k}\times\Aut^0_{Y/k}$. 
    Moreover, we have
$\dim\Pic^0_{X/k}=\dim\Pic^0_{Y/k},\dim\Aut^0_{X/k}=\dim\Aut^0_{Y/k}.$
Here, $\Aut_{X/k}^0$ is the identity component of the group scheme of automorphisms of $X$ over $k$ (and similarly for $Y$).
\end{Prop}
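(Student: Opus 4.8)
The plan is to derive the statement by combining exactly the two results it is attributed to. First I would recall Rouquier's theorem. For a smooth projective connected variety $Z/k$, the group of exact autoequivalences of $\Dbcoh(Z)$ is represented by a group scheme, locally of finite type over $k$, whose identity component is $\Pic^0_{Z/k}\times\Aut^0_{Z/k}$ --- it acts by tensoring with degree-$0$ line bundles and by pushforward along automorphisms, and the product is direct because the connected group scheme $\Aut^0_{Z/k}$ can act on the proper group scheme $\Pic^0_{Z/k}$ only through group automorphisms, hence trivially. Given a derived equivalence $\Phi\colon\Dbcoh(X)\to\Dbcoh(Y)$, conjugation $\Psi\mapsto\Phi\circ\Psi\circ\Phi^{-1}$ is an isomorphism of the groups of autoequivalences, and \cite[Th\'eor\'eme 4.18]{Rou} says that this isomorphism is algebraic, i.e.\ an isomorphism of the associated group schemes. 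Passing to identity components gives the first assertion, $\Pic^0_{X/k}\times\Aut^0_{X/k}\iso\Pic^0_{Y/k}\times\Aut^0_{Y/k}$.

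Next I would extract the dimension statements. Taking dimensions in the isomorphism just obtained yields the single relation
\[
\dim\Pic^0_{X/k}+\dim\Aut^0_{X/k}=\dim\Pic^0_{Y/k}+\dim\Aut^0_{Y/k}.
\]
To separate the two summands I would invoke \cite[Corollary B]{PS}: derived-equivalent smooth projective varieties have Picard schemes whose identity components have the same dimension (in fact isogenous reduced Picard varieties), so that $\dim\Pic^0_{X/k}=\dim\Pic^0_{Y/k}$. Subtracting this from the displayed equality gives $\dim\Aut^0_{X/k}=\dim\Aut^0_{Y/k}$, completing the proof.

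There is no substantial computation here, so the only real point to attend to is that both inputs remain valid in positive characteristic. Rouquier's construction of the group scheme of autoequivalences, and the identification of its identity component, are characteristic-free. For the Picard dimension one cannot simply count global $1$-forms as in characteristic $0$, since $\Pic_{X/k}$ and $\Aut_{X/k}$ may be non-reduced; one must instead work with the (possibly non-reduced) Picard scheme directly, and it is precisely this characteristic-independent form of the invariance that we take from \cite[Corollary B]{PS}. Checking that the cited statements hold in this generality --- rather than proving anything genuinely new --- is thus the main thing the argument has to handle.
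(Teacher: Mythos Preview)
Your proposal is correct and matches the paper's approach exactly: the paper does not give a proof but simply attributes the statement to \cite[Th\'eor\`eme 4.18]{Rou} and \cite[Corollary B]{PS}, with a remark that although \cite{PS} works over $\C$, the same proof carries over to positive characteristic (noting, as you do, that one cannot argue via $h^{1,0}$ since $\Pic^0$ may be non-reduced). Your write-up is just a slightly more explicit unpacking of how the two inputs combine.
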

\begin{Rem}
The first statement of the proposition is due to \cite{Rou}, and the rest of the proposition is the adaptation of \cite[Corollary B]{PS}.
    The paper \cite{PS} works over the complex numbers and proves $h^0(X,\Omega^1_X)=h^0(Y,\Omega^1_Y)$ in Corollary B as a restatement of $\dim\Pic^0_{X/k}=\dim\Pic^0_{Y/k}$. However, their proof of $\dim\Pic^0_{X/k}=\dim\Pic^0_{Y/k}$ works in all characteristics, so our proposition as stated is still true in any characteristic. In positive characteristics, the Hodge number $h^{1,0}(X)$ can differ from the dimension of $\Pic_{X/k}^0$, and in fact Addington and Bragg constructed derived equivalent smooth projective threefolds $X,Y$ over $\overline{\mathbf{F}}_3$ with $h^0(X,\Omega^1_X)\neq h^0(Y,\Omega^1_Y)$ in \cite{AddBragg}.
\end{Rem}
The following result was known for $k=\C$ (\cite[Proposition 3.1]{Huy},\cite{PS}) but appears new in positive characteristics:
\begin{Prop}\label{FMPAV}
    Let $A$ be an abelian variety. If $X$ is another smooth projective variety derived equivalent to $A$, then $X$ must be an abelian variety.
\end{Prop}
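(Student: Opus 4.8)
The plan is to leverage Proposition~\ref{RouPopaSch} to pin down the structure of $X$ from that of $A$. Since $A$ is an abelian variety, the group scheme $\Aut^0_{A/k}$ is just $A$ acting on itself by translations, which has dimension $\dim A$, and $\Pic^0_{A/k} = \widehat{A}$, the dual abelian variety, also of dimension $\dim A$. Thus Proposition~\ref{RouPopaSch} gives an isomorphism of group schemes $\widehat{A}\times A \iso \Pic^0_{X/k}\times \Aut^0_{X/k}$, and in particular $\dim\Aut^0_{X/k}=\dim A = \dim X$ (the last equality because derived equivalent smooth projective varieties have the same dimension --- this follows from the fact that Fourier--Mukai equivalences preserve the dimension, e.g.\ via Hochschild homology or the Serre functor). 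So $\Aut^0_{X/k}$ is a connected group scheme of dimension equal to $\dim X$, acting faithfully on the projective variety $X$.

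The key step is then: a smooth projective variety $X$ of dimension $n$ with $\dim\Aut^0_{X/k}=n$ must be an abelian variety. First, $\Aut^0_{X/k}$ is a connected smooth (in char.\ $p$ one should be slightly careful, but $\Aut^0_{X/k}$ is at least a group scheme, and after passing to $\Aut^0_{X/k,\red}$ which is a group since $k$ is perfect and algebraically closed, we get a smooth connected group variety) group variety $G$ of dimension $n$ acting on $X$ with finite stabilizers --- indeed, since $\dim G = \dim X$ the orbit of a general point is dense, hence open, so $X$ is an equivariant compactification of a homogeneous space $G/H$ with $H$ finite. I would then argue that $G$ must be an abelian variety: by Chevalley's structure theorem $G$ is an extension of an abelian variety by a connected affine group, and if the affine part were nontrivial it would contain $\multGpSch$ or $\addGpSch$, whose action on the projective $X$ would have a fixed point/non-proper orbit, contradicting that the action is "almost transitive" with finite stabilizers on a proper variety (more precisely, one shows $X$ contains a dense orbit isomorphic to $G/H$ which, being open in the proper $X$ and itself a torsor under a group of the same dimension, forces $G$ to be proper, hence an abelian variety). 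Then $X$ contains a dense open subset isomorphic to an abelian variety $B=G/H$, and a smooth projective variety containing a dense open abelian variety is itself that abelian variety: the complement $X\setminus B$, if nonempty, would be a divisor (since $B$ is affine... no --- rather, one uses that translations on $B$ extend to automorphisms of $X$ by density, and an abelian variety has no non-trivial rational self-maps that fail to extend; concretely, $X$ is a smooth projective equivariant compactification of the abelian variety $B$ acting on itself, and such a compactification must be trivial because $B$ acts transitively on $B$ with no boundary --- any boundary point would have to be moved around by all of $B$, but the boundary is a proper closed subset of dimension $<n$ that is $B$-stable, forcing it to be empty since $B$ acts transitively on the $n$-dimensional $B$).

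The main obstacle I anticipate is the argument that $\Aut^0_{X/k}$ being a group variety of dimension $\dim X$ forces it to be proper (an abelian variety), handled carefully in positive characteristic where $\Aut^0_{X/k}$ need not be reduced a priori and where one cannot simply cite complex-analytic uniformization as in \cite{Huy}. Specifically one must rule out a unipotent or torus factor: the cleanest route is to observe that any $\multGpSch$ or $\addGpSch$ inside $\Aut^0_{X/k}$ acts on the projective $X$ with a fixed point (by Borel fixed point for $\multGpSch$, or because $\addGpSch \subset \multGpSch$-type arguments / properness for $\addGpSch$), but combined with the finite-stabilizer/dense-orbit property this is contradictory --- a $1$-parameter subgroup with a fixed point cannot act with finite stabilizers on the dense orbit. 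Once $G$ is known to be an abelian variety with a dense orbit $G/H \subseteq X$, finiteness of $H$ plus properness of $X$ makes $X \iso G/H$ itself an abelian variety (an \'etale quotient of an abelian variety by a finite subgroup scheme of translations), and since $G/H$ is already proper and of dimension $n=\dim X$, it equals $X$. I would also double check at the outset, perhaps citing \cite{PS} or a standard reference, that derived equivalence of smooth projective varieties preserves dimension, which is needed to conclude $\dim\Aut^0_{X/k}=\dim X$ rather than merely $\dim\Aut^0_{X/k}=\dim A$.
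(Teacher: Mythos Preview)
Your overall framework --- invoke Proposition~\ref{RouPopaSch} and study the action of $\Aut^0_{X/k}$ on $X$ --- matches the paper's, but the proposal has a genuine gap at what you call the ``key step.'' The assertion that a smooth projective $X$ with $\dim\Aut^0_{X/k}=\dim X$ must be an abelian variety is \emph{false}. Take a curve $C$ of genus $\geq 2$, a degree-one line bundle $L$ on $C$ (so $h^0(L)=1$ and $h^0(L^{-1})=0$), and set $X=\mathbb{P}_C(\OCal_C\oplus L)$. Since $\Aut^0(C)$ is trivial, $\Aut^0_{X/k}$ acts fibrewise and is identified with $\Aut(\OCal_C\oplus L)/\multGpSch$, a connected solvable group of dimension $2$; thus $\dim\Aut^0_{X/k}=2=\dim X$, yet $X$ is a ruled surface, not abelian. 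Your sub-claim ``$\dim G=\dim X$ gives a dense orbit'' already fails here: every orbit lies in a fibre of the ruling and has dimension $\leq 1$. So the Chevalley/Borel-fixed-point detour cannot be salvaged from the dimension equality alone, and the ``finite stabilisers contradicted by a $\multGpSch$- or $\addGpSch$-fixed point'' step has no traction (a fixed point on the boundary of a dense orbit is not a contradiction in any case).

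What you are not using is that Proposition~\ref{RouPopaSch} gives an isomorphism of \emph{group schemes} $\Pic^0_{X/k}\times\Aut^0_{X/k}\iso A\times\widehat{A}$, not merely an equality of dimensions. This single observation does all the work you are attempting via Chevalley: it exhibits $G\defeq\Aut^0_{X/k}$ as a connected closed subgroup scheme of the abelian variety $A\times\widehat{A}$, hence itself an abelian variety --- and the reducedness worry in characteristic $p$ evaporates because $A\times\widehat{A}$ is smooth. The paper then finishes cleanly: with $G$ complete, the rigidity lemma forces the identity component of the stabiliser of any $x\in X(k)$ to be trivial, so the orbit map $G\to X$ is finite \'etale and (by dimension) surjective; finally, commutativity of $G(k)$ shows that any element of the stabiliser fixes every $k$-point of $X$ and is therefore trivial, giving $G\iso X$.
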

\begin{proof}
    Let $g=\dim A=\dim X$, where we note that the dimension is derived-invariant. The first half of the proposition above shows $\Pic^0_{X/k}\times \Aut_{X/k}^0\iso A\times {A}^{\vee}$, from which we see that $G\defeq\Aut_{X/k}^0$ is an abelian variety.
     The second half says that $\Aut_{X/k}^0$ has dimension $g$.
    
    Choose a rational point $x\in X(k)$ and consider the action map
    \[f:G\longrightarrow X;\quad g\mapsto g\cdot x.\]
    We consider the stabilizer $S\inj G$ which is the fiber product $S\defeq \{x\}\times_XG$ (closed immersion is a monomorphism, so $h_S\inj h_G$ defines a subfunctor, and it's easy to see that it represents the subgroup functor of stabilizers).
    
    Note that $f$ acts on $T$-points as follows. Given a $T$-linear automorphism 
    \[(\sigma: X_T\iso X_T)\in G(T),\]
    its image $f(\sigma)\in X(T)$ is the composition 
    \[
    \sigma\circ x_T:T\longrightarrow X_T\longrightarrow X_T
    \] viewed as an element in $\Hom_T(T,X_T)=\Hom_k(T,X)$.

    Thus, the inclusion $S^0\inj G$ corresponds to some $S^0$-linear isomorphism $\tau:X\times S^0\longrightarrow X\times S^0 $ such that the composition \[ 
    \pi\circ \tau\circ \iota:
    \{x\}\times S^0\inj X\times S^0\longrightarrow X\times S^0 \longrightarrow X\] is the constant map to $x$, where $\pi$ is the first projection, and $\iota$ is the obvious inclusion map. By the rigidity lemma (\cite[Proposition 6.1]{GIT}), the map 
    \[\pi\circ \tau:
     X\times S^0\longrightarrow X\times S^0 \longrightarrow X\]
     factors through the projection to $X$. On the other hand, the restriction of the map $\pi\circ \tau$ to $X\times \{e\}$ is the identity, so $\tau=\id_{X\times S^0}$. In view of the functor that $G$ represents, the inclusion $S^0\inj G$ factors through the unit section $e_G.$
     This shows $S^0=e_G$, so that $S$ is finite \'etale over $k$. By translation, the fiber of $f$ above any $k$-point on $X$ is finite. As $f$ is projective, it is then a finite map. By dimension reasons, it must be surjective. 
     
    Since the stabilizer $S$ is finite \'etale over $k$, the action map $G\longrightarrow X$ is also finite \'etale. 
     We claim that $S(k)$ is a singleton. 
     Choose $g\in S(k)$. We know that $f$ is surjective on $k$-points, so for any rational point $y\in X(k)$, there is some $h_y\in G(k)$ with $h_y\cdot x=y.$
     Since $g$ fixes the $k$-rational point $x$, we see \[g\cdot y=g h_y\cdot x=h_y g\cdot x= h_y\cdot x=y,
     \] where we used that $G(k)$ is abelian. This shows that $g\in G(k)=\Aut_{X/k}^\circ(k)$ corresponds to a $k$-automorphism $X\longrightarrow X$ which fixes every $k$-point. It follows that $g$ is trivial. Thus, $S(k)$ is the trivial group, and $S$ is the trivial group scheme over $k$. This shows that the action map $G\longrightarrow X$ is in fact an isomorphism.
\end{proof}
\begin{Rem}
The last paragraph of the proof can be replaced by an argument using the following fact (whose proof in arbitrary characteristics was communicated to the author by S.\ Cotner): If $f:X\longrightarrow Y$ is a finite \'etale morphism of smooth projective connected varieties, the induced map $\Alb(X)\longrightarrow \Alb(Y)$ is surjective on tangent spaces. However, our current proof appears simpler in arbitrary characteristics, given the work involved in justifying this fact. The alternative proof goes as follows.

The action map $G\longrightarrow X$ is finite \'etale, so the induced map of Albanese varieties $\Alb(G)=G\longrightarrow X\longrightarrow\Alb(X)$ is surjective on tangent spaces. Since $\dim\Alb(X)=\dim(((\Pic^0_{X/k})_{\red})^\vee)=\dim\Pic^0_{X/k}=g$, the three varieties $X,G,\Alb(X)$ have the same dimension. It follows that the map $X\longrightarrow\Alb(X)$ is \'etale and projective, hence surjective and finite as well. Thus, $X$ is an abelian variety.
\end{Rem}
\begin{Cor}\label{FMPASch}
Assume $\character(k)=p.$
    Let $\XCal,\ACal$ be relatively Fourier-Mukai equivalent smooth projective $W(k)$-schemes. If $\ACal$ is an abelian scheme, so is $\XCal$.
\end{Cor}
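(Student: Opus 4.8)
The plan is to test the given relative equivalence on the special fibre, apply Proposition~\ref{FMPAV} there, and then propagate the resulting abelian-variety structure from the special fibre to all of $\XCal$. First I would restrict the $W(k)$-relative Fourier--Mukai equivalence to the closed point $s\in\Spec W(k)$: by condition~(1) of Proposition/Definition~\ref{RelEquiv} its kernel restricts on $s$ to a Fourier--Mukai equivalence $\Dbcoh(\XCal_s)\iso\Dbcoh(\ACal_s)$. Since $\ACal$ is an abelian scheme, $\ACal_s$ is an abelian variety over $k$; and since $\XCal$ is smooth projective over $W(k)$, the fibre $\XCal_s$ is smooth projective over $k$, and it is connected because over the algebraically closed field $k$ the number of connected components of a smooth projective variety --- equivalently $\dim_k H^0$ of its structure sheaf --- is a derived invariant. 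Hence Proposition~\ref{FMPAV} applies and shows $\XCal_s$ is an abelian variety over $k$; I fix its identity $0\in\XCal_s(k)$.

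Next I would upgrade this to the statement that $\XCal$ itself is an abelian scheme, invoking the well-known principle that a smooth projective $W(k)$-scheme whose special fibre is an abelian variety is automatically an abelian scheme. Concretely: as $\XCal\to\Spec W(k)$ is smooth and $W(k)$ is complete local, the point $0$ lifts to a section $e\in\XCal(W(k))$; for $n\geq1$ put $\XCal_n\defeq\XCal\times_{W(k)}(W(k)/p^n)$. By the deformation theory of abelian varieties each $\XCal_n$ carries a structure of abelian scheme with identity $e\bmod p^n$ reducing to the one on $\XCal_{n-1}$, and this structure is unique, since two such structures on $\XCal_n$ are intertwined by the identity morphism, which fixes the identity section and is therefore a homomorphism by the rigidity lemma \cite[Proposition~6.1]{GIT}. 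These structures are compatible in $n$ and thus give a formal abelian scheme $\widehat\XCal=\colim_n\XCal_n$ over $\operatorname{Spf}W(k)$; since $\XCal$, hence also $\XCal\times_{W(k)}\XCal$ and its triple self-product, is projective over $W(k)$, Grothendieck's existence theorem algebraizes the formal multiplication and inverse to $W(k)$-morphisms $m\colon\XCal\times_{W(k)}\XCal\to\XCal$ and $\imath\colon\XCal\to\XCal$, and injectivity of the same bijection on Hom-sets forces the group axioms --- which hold after $p$-adic completion --- to hold on the nose. Finally $\XCal$ is integral, being smooth over the regular ring $W(k)$ and hence regular, with no connected component avoiding the connected special fibre $\XCal_s$ (properness over $W(k)$); having the section $e$, all its fibres are then geometrically connected, so $\XCal$ is an abelian scheme.

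The one substantive obstacle is the deformation-theoretic input used above: if one prefers not to quote ``a smooth projective lift of an abelian variety over $W(k)$ is an abelian scheme'' as a black box, one has to prove that the forgetful map from abelian-scheme deformations of $(\XCal_s,0)$ to smooth-proper-scheme deformations is an isomorphism of functors --- injectivity is the rigidity argument above, surjectivity (that a scheme-theoretic deformation of an abelian variety admits a compatible group law) is the classical content --- and then algebraize as above; everything specific to Fourier--Mukai theory is confined to the first step and is supplied by Proposition~\ref{FMPAV}. An alternative is to argue through N\'eron models: Proposition~\ref{FMPAV} also makes $\XCal_\eta$ and $\XCal_{\overline{\eta}}$ abelian varieties, smooth proper base change makes the Galois action on $H^1_{\et}(\XCal_{\overline{\eta}},\integ_\ell)$ unramified, so $\XCal_\eta$ has good reduction by N\'eron--Ogg--Shafarevich; but identifying $\XCal$ with the N\'eron model of $\XCal_\eta$ is not obviously simpler than the deformation argument.
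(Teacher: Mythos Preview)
Your argument is correct and follows the same approach as the paper's proof: restrict to the special fibre and apply Proposition~\ref{FMPAV}, then use the standard deformation theory of abelian varieties (\cite[\S6.3]{GIT}) to obtain a formal abelian scheme structure on $\widehat{\XCal}$, and finally algebraize via Grothendieck's existence theorem (\cite[Th\'eor\`eme 5.4.1]{EGA3}). The paper's proof is essentially a two-line citation of these ingredients, whereas you spell out the lifting of the identity section, the rigidity argument for uniqueness of the group law on each $\XCal_n$, the algebraization of the multiplication and inverse, and the geometric connectedness of the fibres; this is all fine and arguably clarifies points the paper leaves implicit (e.g.\ the connectedness of $\XCal_s$ needed to invoke Proposition~\ref{RouPopaSch}).
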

\begin{proof}
    Proposition \ref{FMPAV} implies that the special fiber $\XCal_s$ is an abelian variety. By standard deformation theory, we see that the formal completion $\widehat{\XCal}$ has an abelian formal scheme structure (cf.\ \cite[\S6.3]{GIT}).
    By \cite[Th\'eor\`eme 5.4.1]{EGA3}, the abelian formal scheme structure algebraizes, 
     so that $\XCal$ is an abelian scheme.
\end{proof}
\section{Abelian variety case}\label{secAV}
\subsection{Relative de Rham cohomology of abelian schemes}
Let $S$ be a locally noetherian scheme, and let $(F^\bullet,M)$ be a locally free coherent $\OCal_S$-module with a decreasing filtration (with non-negative indices). 
Then, the exterior power $\bigwedge^m M$ has the following filtration. 
\[
F^k\bigwedge^mM\defeq\sum_{i_1+i_2+...+i_m=k}F^{i_1}M\wedge F^{i_2}M\wedge...\wedge F^{i_s}M \subset \bigwedge^mM.
\]
Let $\ACal$ be an abelian scheme over a locally noetherian scheme $S$. Let $f:\ACal\longrightarrow S$ be the structure morphism.
\begin{Lemma}[{\cite[Proposition 2.5.2]{BBM}}]\label{BBMLemma}
\leavevmode
    \begin{enumerate}
        \item For each non-negative integer $m$, cup product defines an isomorphism of coherent $\OCal_S$-modules
\[
\cup_m:\bigwedge^m\HCal_{\dR}^1(\ACal/S)\longrightarrow \HCal_{\dR}^m(\ACal/S).
\] 
\item Hodge to de Rham spectral sequence for $f:\ACal\longrightarrow S$ degenerates at the $E_1$-page. 
\item The coherent sheaves $\HCal^m_{\dR}(\ACal/S),R^if_*(\Omega^j_{\ACal/S})$ are locally free and their formation is compatible with (underived) base change.
    \end{enumerate}
\end{Lemma}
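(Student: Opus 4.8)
The plan is to reduce all three assertions to the case $S=\Spec k$ with $k$ a field and then globalize, exploiting that $Rf_*\Omega^\bullet_{\ACal/S}$ is a perfect complex whose formation commutes with derived base change. First I would dispose of the Hodge sheaves in part (3). Since $\ACal\to S$ is a group scheme, translations trivialize the relative cotangent bundle, so the canonical map $f^*\omega\to\Omega^1_{\ACal/S}$ is an isomorphism, where $\omega\defeq e^*\Omega^1_{\ACal/S}$ is locally free of rank $g$ (the relative dimension) and $e$ is the identity section. Hence $\Omega^j_{\ACal/S}\iso f^*\bigwedge^j\omega$, and the projection formula gives $R^if_*\Omega^j_{\ACal/S}\iso R^if_*\OCal_\ACal\tens_{\OCal_S}\bigwedge^j\omega$, so it suffices to know that $R^1f_*\OCal_\ACal$ is locally free of rank $g$ with base-change-compatible formation and that cup product induces isomorphisms $\bigwedge^iR^1f_*\OCal_\ACal\iso R^if_*\OCal_\ACal$. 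This is the relative form of Mumford's computation of the cohomology of the structure sheaf of an abelian variety over a field; one quotes it, or deduces it from the field case by cohomology and base change once one observes that $\dim_{\kappa(s)}H^i(\ACal_s,\OCal)=\binom gi$ is constant.

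For part (2) and the de Rham part of (3) I would argue uniformly. Since $f$ is smooth and proper and $\Omega^\bullet_{\ACal/S}$ is a bounded complex of vector bundles, $Rf_*\Omega^\bullet_{\ACal/S}$ is perfect on $S$, and flat base change along $\{s\}\inj S$ identifies the degree-$m$ cohomology of its derived fiber with $H^m_{\dR}(\ACal_s/\kappa(s))$. By the standard fact that a perfect complex whose derived fibers have locally constant cohomology dimension in every degree splits locally into its cohomology sheaves (which are then locally free and base-change compatible), it is enough to prove, over a field, that $\dim_kH^m_{\dR}(A/k)=\binom{2g}m$ for all $m$; this simultaneously yields that $\HCal^m_{\dR}(\ACal/S)$ is locally free and base-change compatible, and that the Hodge--de Rham spectral sequence degenerates at $E_1$, since degeneration is equivalent to the fiberwise identity $\operatorname{rank}\HCal^m_{\dR}=\sum_{i+j=m}\operatorname{rank}R^if_*\Omega^j_{\ACal/S}$, i.e.\ $\binom{2g}m=\sum_{i+j=m}\binom gi\binom gj$.

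To settle the field case, I would first record that $\dim_kH^1_{\dR}(A/k)=2g$, via the canonical identification of $H^1_{\dR}(A/k)$ with the Lie algebra of the universal vectorial extension of the dual abelian variety $\widehat A$ (an extension of $\widehat A$, of dimension $g$, by a vector group of rank $\dim_kH^1(\widehat A,\OCal_{\widehat A})=g$), valid in all characteristics. The group law on $A$ makes $H^\bullet_{\dR}(A/k)$ a graded-commutative, graded-cocommutative connected Hopf algebra (Künneth for de Rham cohomology is used here) in which $H^1_{\dR}$ is primitive, so cup product defines a Hopf-algebra map $\bigwedge^\bullet H^1_{\dR}(A/k)\to H^\bullet_{\dR}(A/k)$ that is the identity in degree $1$; since a nonzero Hopf ideal in a connected graded Hopf algebra contains a nonzero primitive of positive degree while the positive-degree primitives of the exterior Hopf algebra $\bigwedge^\bullet H^1_{\dR}$ all lie in degree $1$, this map is injective. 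Thus $\dim_kH^m_{\dR}(A/k)\ge\binom{2g}m$, while the Hodge--de Rham spectral sequence gives $\dim_kH^m_{\dR}(A/k)\le\sum_{i+j=m}\binom gi\binom gj=\binom{2g}m$; equality in every degree forces degeneration and makes each $\cup_m$ an isomorphism. Finally, over $S$ the map $\cup_m\colon\bigwedge^m\HCal^1_{\dR}(\ACal/S)\to\HCal^m_{\dR}(\ACal/S)$ is a morphism of locally free sheaves of equal rank that is an isomorphism on every fiber, hence an isomorphism by Nakayama, proving (1).

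I expect the real work to be in keeping everything uniform in the characteristic: the input $\dim_kH^1_{\dR}(A/k)=2g$ and the Hopf-algebra argument for injectivity of $\bigwedge^\bullet H^1_{\dR}\to H^\bullet_{\dR}$ are exactly what make the spectral sequence degenerate independently of $p$, and the passage from constant fiberwise cohomology dimensions to locally free, base-change-compatible cohomology sheaves for the perfect complex $Rf_*\Omega^\bullet_{\ACal/S}$ is where part (3) genuinely uses more than the field case. All of this is, in any event, recorded in \cite[\S2.5]{BBM}.
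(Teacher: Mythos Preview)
The paper does not prove this lemma; it is simply quoted from \cite[Proposition 2.5.2]{BBM} with no argument given. Your sketch is a correct outline of the standard proof (reduction to the field case via perfect complexes and cohomology-and-base-change, the identification $\dim_k H^1_{\dR}(A/k)=2g$, and the Hopf-algebra injectivity argument forcing $E_1$-degeneration and the exterior-algebra structure), and you yourself note at the end that this is what is recorded in \cite[\S2.5]{BBM}. So there is nothing further to compare.
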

Each de Rham cohomology sheaf $\HCal_{\dR}^i(\ACal/S)$ is equipped with the Hodge filtration, obtained from the naive truncation of the de Rham complex. As we will show next, this filtration has a rather simple description.
If we equip $\bigwedge^m\HCal_{\dR}^1(\ACal/S)$ with the exterior power filtration as above, we see from
\cite[\href{https://stacks.math.columbia.edu/tag/0FM7}{Tag 0FM7}]{stacks-project} that the map $\cup_m$ is a morphism of filtered finite locally free $\OCal_S$-modules. We claim:
\begin{Prop}\label{HdgFilWedg}
For each non-negative integer $m$, cup product defines an isomorphism of filtered $\OCal_S$-modules
\[
\cup_m:\bigwedge^m\HCal_{\dR}^1(\ACal/S)\longrightarrow \HCal_{\dR}^m(\ACal/S).
\] 
\end{Prop}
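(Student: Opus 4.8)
The plan is to check that the cup product isomorphism $\cup_m$ from Lemma~\ref{BBMLemma}(1) is compatible with the Hodge filtrations on both sides, where $\bigwedge^m\HCal_{\dR}^1(\ACal/S)$ carries the exterior power filtration induced by the Hodge filtration on $\HCal_{\dR}^1(\ACal/S)$. Since both filtrations are filtrations by finite locally free subsheaves (using Lemma~\ref{BBMLemma}(3) and the degeneration in part (2)), and since $\cup_m$ is already known to be an isomorphism of the underlying sheaves as well as a filtered morphism (by the cited Tag 0FM7), it suffices to show that $\cup_m$ induces an isomorphism on each filtration step $F^k$, or equivalently on each graded piece $\mathrm{gr}_F^k$. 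As everything in sight is compatible with base change, I would reduce to checking this on geometric fibers, i.e. to the case $S=\Spec L$ for $L$ an algebraically closed field.

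First I would unwind the graded pieces. On the de Rham side, $E_1$-degeneration gives $\mathrm{gr}_F^k\HCal_{\dR}^m(\ACal/S)\iso R^{k}f_*\Omega^{m-k}_{\ACal/S}\iso H^{m-k}(\ACal,\Omega^k_{\ACal/S})$ (reindexing according to the convention for the naive-truncation filtration); for an abelian scheme $\Omega^k_{\ACal/S}\iso\bigwedge^k f^*\omega$ where $\omega = f_*\Omega^1_{\ACal/S}$ is the codifferentials, so this is $\bigwedge^k\omega\tens_{\OCal_S} H^{m-k}(\ACal,\OCal_\ACal)\iso \bigwedge^k\omega\tens\bigwedge^{m-k}R^1f_*\OCal_\ACal$. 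On the exterior-power side, $F^1\HCal_{\dR}^1=\omega$ and $\mathrm{gr}_F^0\HCal_{\dR}^1 = R^1f_*\OCal_\ACal$, and a direct computation with the exterior power filtration shows $\mathrm{gr}_F^k\bigwedge^m\HCal_{\dR}^1\iso \bigwedge^k\omega\tens\bigwedge^{m-k}R^1f_*\OCal_\ACal$. So the two graded pieces are abstractly the same sheaf; the point is that $\mathrm{gr}(\cup_m)$ realizes this identification, which follows because on graded pieces the cup product on $\HCal_{\dR}^\bullet$ restricts to the cup product on $H^\bullet(\ACal,\Omega^\bullet_{\ACal/S})$ (the multiplicativity of the Hodge-de Rham spectral sequence), and the latter is an isomorphism $\bigwedge^k\omega\tens\bigwedge^{m-k}R^1f_*\OCal\to H^{m-k}(\ACal,\Omega^k)$ by the classical computation of the Hodge cohomology of an abelian scheme (which is exactly the graded/associated version of Lemma~\ref{BBMLemma}(1), or can be cited from \cite{BBM} or \cite{BerthelotOgus}).

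Concretely, the key steps in order are: (i) reduce to geometric fibers by base-change compatibility; (ii) observe $\cup_m$ is a filtered morphism of finite locally free modules, so bijectivity of $\mathrm{gr}(\cup_m)$ implies strictness and hence the filtered isomorphism claim, and conversely that it suffices to check surjectivity on each $F^k$; (iii) identify the graded pieces on both sides as $\bigwedge^k\omega\tens\bigwedge^{m-k}R^1f_*\OCal_\ACal$; (iv) invoke multiplicativity of the Hodge-de Rham spectral sequence to see $\mathrm{gr}(\cup_m)$ is the cup product on Hodge cohomology; (v) cite (or recompute) that this cup product is an isomorphism for abelian schemes. The main obstacle is step~(iv): making precise that the de Rham cup product is compatible with the Hodge filtration and induces the expected map on graded pieces. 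In $E_1$-degenerate situations this is standard, but it requires care to pin down the indexing conventions (whether $F^k$ of $\HCal^m$ sits in $H^{m-k}(\Omega^k)$ or $H^k(\Omega^{m-k})$) so that the exterior power filtration on $\bigwedge^m\HCal^1_{\dR}$ matches on the nose; once the bookkeeping is fixed, the argument is formal. An alternative to (iv)–(v) that sidesteps multiplicativity: argue inductively on $m$, using that $F^1\HCal^1_{\dR}=\omega$ maps into $F^1\HCal^m_{\dR}$ and a dimension count of the subspaces $F^k\HCal^m_{\dR}$ against the images $\cup_m(F^k\bigwedge^m\HCal^1_{\dR})$, all dimensions being computable from Lemma~\ref{BBMLemma}. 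I expect the spectral-sequence-multiplicativity route to be cleanest.
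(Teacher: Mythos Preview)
Your proposal is correct. Both you and the paper reduce to the field case via base change and Nakayama, noting that $\cup_m$ is already a filtered morphism of locally free modules, so it suffices to check something on fibers. The difference is in what gets checked there. Your primary route identifies the map $\mathrm{gr}(\cup_m)$ explicitly as the cup product on Hodge cohomology, using multiplicativity of the Hodge--de~Rham spectral sequence, and then invokes the known computation of that cup product for abelian varieties. The paper instead does a pure dimension count: over a field, both $\mathrm{gr}^i\bigwedge^m H^1_{\dR}$ and $\mathrm{gr}^i H^m_{\dR}$ have dimension $\binom{g}{i}\binom{g}{m-i}$, so the injective filtered map $\cup_m$ must be a filtered isomorphism. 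This is precisely your ``alternative'' route, and it is lighter: it avoids having to make the multiplicativity statement precise (which, as you note, requires some bookkeeping with indexing conventions). Your approach has the advantage of actually identifying what the induced map on associated gradeds \emph{is}, but for the statement at hand the dimension count suffices and is shorter.
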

\begin{proof}
It suffices to show that this induces a surjective map on each filtered piece.
By Lemma \ref{HdgFilField} below, this is known if $S$ is the spectrum of a field. 
The general case can be reduced to the field case by the base change of Hodge filtration (Lemma \ref{HdgFilBaseChange} below) and Nakayama's lemma.
\end{proof}

Here are the lemmas used in the proof of the proposition:
\begin{Lemma}\label{HdgFilBaseChange}
    The coherent sheaves $\HCal^i(Rf_*\Omega_{\ACal/S}^{\geq j})$ are finite locally free $\OCal_S$-modules and the morphisms $\HCal^i(Rf_*\Omega_{\ACal/S}^{\geq j})\longrightarrow\HCal^i(Rf_*\Omega_{\ACal/S}^{\bullet})
     $ are injective, thereby inducing isomorphisms $\HCal^i(Rf_*\Omega_{\ACal/S}^{\geq j})\iso Fil^j\HCal^i(Rf_*\Omega_{\ACal/S}^{\bullet})\subset \HCal^i(Rf_*\Omega_{\ACal/S}^{\bullet})$. Moreover, the formations of the sheaves $\HCal^i(Rf_*\Omega_{\ACal/S}^{\geq j})$ and the Hodge filtrations on $\HCal^i(Rf_*\Omega_{\ACal/S}^{\bullet})$ are compatible with (underived) base change.
\end{Lemma}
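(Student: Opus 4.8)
The plan is to analyze the naive (``stupid'') truncations $\Omega_{\ACal/S}^{\geq j}$ of the de Rham complex. These sit in short exact sequences of complexes $0\to\Omega_{\ACal/S}^{\geq j+1}\to\Omega_{\ACal/S}^{\geq j}\to\Omega_{\ACal/S}^{j}[-j]\to 0$, hence after applying $Rf_*$ in distinguished triangles
\[
Rf_*\Omega_{\ACal/S}^{\geq j+1}\longrightarrow Rf_*\Omega_{\ACal/S}^{\geq j}\longrightarrow Rf_*\Omega_{\ACal/S}^{j}[-j]\longrightarrow Rf_*\Omega_{\ACal/S}^{\geq j+1}[1].
\]
Since $\Omega_{\ACal/S}^{\geq j}=0$ once $j$ exceeds the relative dimension, I would run a descending induction on $j$, the inputs being exactly the two facts supplied by Lemma \ref{BBMLemma}: the relative Hodge--to--de Rham spectral sequence $E_1^{p,q}=R^qf_*\Omega_{\ACal/S}^{p}\Rightarrow\HCal_{\dR}^{p+q}(\ACal/S)$ degenerates at $E_1$, and the sheaves $R^qf_*\Omega_{\ACal/S}^{p}$ are finite locally free with formation compatible with base change.

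The crucial point is that $E_1$-degeneration forces the connecting maps
\[
\partial\colon R^{i-j}f_*\Omega_{\ACal/S}^{j}=\HCal^{i}\bigl(Rf_*\Omega_{\ACal/S}^{j}[-j]\bigr)\longrightarrow\HCal^{i+1}\bigl(Rf_*\Omega_{\ACal/S}^{\geq j+1}\bigr)
\]
of the long exact sequences to vanish. Indeed, composing $\partial$ with the projection onto the top graded piece $\HCal^{i+1}(Rf_*\Omega_{\ACal/S}^{\geq j+1})\to\HCal^{i+1}(Rf_*\Omega_{\ACal/S}^{j+1}[-j-1])=R^{i-j}f_*\Omega_{\ACal/S}^{j+1}$ recovers $d_1$, which is zero; since the source $R^{i-j}f_*\Omega_{\ACal/S}^{j}$ is locally free it is (locally on $S$) projective, so $\partial$ lifts along $\HCal^{i+1}(Rf_*\Omega_{\ACal/S}^{\geq j+2})\to\HCal^{i+1}(Rf_*\Omega_{\ACal/S}^{\geq j+1})$, and iterating — the successive lifts reading off the higher differentials $d_2,d_3,\dots$, all zero — the finiteness of the filtration yields $\partial=0$. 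Each triangle therefore degenerates to a short exact sequence
\[
0\longrightarrow\HCal^{i}(Rf_*\Omega_{\ACal/S}^{\geq j+1})\longrightarrow\HCal^{i}(Rf_*\Omega_{\ACal/S}^{\geq j})\longrightarrow R^{i-j}f_*\Omega_{\ACal/S}^{j}\longrightarrow 0
\]
with locally free right-hand term; this sequence is locally split, so by descending induction the middle term is locally free, and composing the inclusions down to $j=0$ shows that $\HCal^{i}(Rf_*\Omega_{\ACal/S}^{\geq j})\to\HCal^{i}(Rf_*\Omega_{\ACal/S}^{\bullet})=\HCal_{\dR}^{i}(\ACal/S)$ is injective with image the naive-filtration abutment, i.e.\ $Fil^{j}\HCal_{\dR}^{i}(\ACal/S)$.

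For base change, let $u\colon S'\to S$ be any morphism, set $\ACal'=\ACal\times_S S'$ and let $g\colon\ACal'\to\ACal$, $f'\colon\ACal'\to S'$. Since $f$ is flat the Cartesian square is Tor-independent, so base change for $Rf_*$ gives $Lu^*Rf_*\Omega_{\ACal/S}^{\geq j}\iso Rf'_*Lg^*\Omega_{\ACal/S}^{\geq j}$; and $\Omega_{\ACal/S}^{\geq j}$ is a complex of locally free sheaves whose terms' formation commutes with base change, so $Lg^*\Omega_{\ACal/S}^{\geq j}=\Omega_{\ACal'/S'}^{\geq j}$ termwise. Having shown that every cohomology sheaf $\HCal^{i}(Rf_*\Omega_{\ACal/S}^{\geq j})$ is locally free, $Lu^*$ commutes with taking cohomology of $Rf_*\Omega_{\ACal/S}^{\geq j}$, whence $\HCal^{i}(Rf'_*\Omega_{\ACal'/S'}^{\geq j})=u^*\HCal^{i}(Rf_*\Omega_{\ACal/S}^{\geq j})$. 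Finally $Fil^{j}\HCal_{\dR}^{i}(\ACal/S)\hookrightarrow\HCal_{\dR}^{i}(\ACal/S)$ has locally free cokernel (a successive extension of the $R^{i-p}f_*\Omega_{\ACal/S}^{p}$ with $p<j$), so it stays injective after $u^*$, and compatibility with the base-change isomorphisms just constructed identifies its image with $Fil^{j}\HCal_{\dR}^{i}(\ACal'/S')$.

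The only step that is not purely formal is the vanishing of the connecting maps $\partial$ — equivalently, the strictness of the naive filtration — which is the standard reformulation of $E_1$-degeneration; making the ``lift repeatedly and read off $d_r$'' argument precise is a routine but slightly fiddly diagram chase in the spectral sequence of a filtered complex, and one may instead simply quote a reference for this equivalence. Granting it, local freeness, injectivity and base-change compatibility all follow formally from the short exact sequences above together with the local freeness and base-change compatibility of the Hodge sheaves $R^qf_*\Omega_{\ACal/S}^{p}$.
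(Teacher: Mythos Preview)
Your proof is correct and follows essentially the same strategy as the paper's: both use the $E_1$-degeneration and local freeness of the $R^qf_*\Omega^p_{\ACal/S}$ from Lemma \ref{BBMLemma} to exhibit each $\HCal^i(Rf_*\Omega_{\ACal/S}^{\geq j})$ as a successive extension of locally free Hodge sheaves, then deduce local freeness, injectivity, and base-change compatibility formally. The paper packages this via a direct comparison of the filtered-complex spectral sequences for $\Omega_{\ACal/S}^{\geq j}$ and $\Omega_{\ACal/S}^{\bullet}$ (noting that the former degenerates because it injects into the latter on every page) rather than unpacking the connecting maps of the triangles as you do; one small remark is that your appeal to projectivity to lift $\partial$ is unnecessary, since by the inductive hypothesis the maps $\HCal^{i+1}(Rf_*\Omega_{\ACal/S}^{\geq j+2})\to\HCal^{i+1}(Rf_*\Omega_{\ACal/S}^{\geq j+1})$ are already injective and the factorization is automatic and unique.
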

\begin{proof}
    
    We endow $\Omega^{\geq j}_{\ACal/S}$ with a filtration similarly to the naive filtration on $\Omega^{\bullet}_{\ACal/S}$. Then, we have an analogue of the Hodge to de Rham spectral sequence which computes cohomology sheaves of $Rf_*(\Omega^{\geq j}_{\ACal/S})$. (More precisely, we first choose a double complex injective resolution of $\Omega^{\geq j}_{\ACal/S}$ and use its totalization to obtain a complex of injective objects $I^\bullet$ quasi-isomorphic to $\Omega^{\geq j}_{\ACal/S}$. $I^\bullet$  remembers the filtered structure because totalizing one of the two naive filtrations on the double complex lets us resolve the filtered pieces of $\Omega^{\geq j}_{\ACal/S}$. This way, $I^\bullet$ defines a filtered complex of injectives. The filtered complex spectral sequence of $f_*(I^\bullet)$ with the induced filtration is what we needed.)
    The morphism $\Omega^{\geq j}_{\ACal/S}\longrightarrow\Omega^{\bullet}_{\ACal/S}$ respects the filtration, so that we can consider the induced morphism of spectral sequences. The maps on the $E_1^{r,s}$ terms are $R^sf_*(Gr^r(\Omega_{\ACal/S}^{\geq j}))\longrightarrow R^sf_*(Gr^r(\Omega^{\bullet}_{\ACal/S}))$ (which are either the identity or the inclusion of the zero sheaf). By Lemma \ref{BBMLemma}, both of these spectral sequences degenerate at the $E_1$-page, so this description also applies to the $E_{\infty}$-page.
    The nonzero terms of the former spectral sequence have the form $R^sf_*(\Omega^r_{\ACal/S})$ which are locally free by Lemma \ref{BBMLemma}. It follows that $\HCal^i(Rf_*\Omega^{\geq j}_{\ACal/S})$ can be written as a successive extension of locally free coherent modules. It then must be flat, hence locally free by induction.  

    For the injectivity of $\HCal^i(Rf_*\Omega_{\ACal/S}^{\geq j})\longrightarrow\HCal^i(Rf_*\Omega_{\ACal/S}^{\bullet})
     $, we look at the map between the spectral sequences mentioned above. As noted above, the map of the spectral sequences at the $E_\infty$-page are injective, so the various $Gr$'s of the map $\HCal^i(Rf_*\Omega_{\ACal/S}^{\geq j})\longrightarrow\HCal^i(Rf_*\Omega_{\ACal/S}^{\bullet})
     $ are injective. By descending induction on $s$ and four lemma, we see that $Fil^s\HCal^i(Rf_*\Omega_{\ACal/S}^{\geq j})\longrightarrow Fil^s\HCal^i(Rf_*\Omega_{\ACal/S}^{\bullet})
     $ is injective for any $s$.

    For the base change statement, consider base-changing along the map $g:S'\longrightarrow S$.
By the spectral sequence $E_2^{r,s}=L^rg^*\HCal^s(Rf_*\Omega^{\geq j}_{\ACal/S})\implies \HCal^{r+s}(Lg^*Rf_*\Omega^{\geq j}_{\ACal/S})$ and the local-freeness proved at the beginning, we see $g^*\HCal^i(Rf_*\Omega^{\geq j}_{\ACal/S})\iso \HCal^{i}(Lg^*Rf_*\Omega^{\geq j}_{\ACal/S})$.
 The formations of $Rf_*\Omega_{\ACal/S}^\bullet,Rf_*\Omega_{\ACal/S}^{\geq j}$ are compatible with (derived) base change by \cite[\href{https://stacks.math.columbia.edu/tag/0FM0}{Tag 0FM0}]{stacks-project} and its proof, so we are done.
\end{proof}
\begin{Lemma}[Field case]\label{HdgFilField}
    Let $A$ be an abelian variety over a field $k$ (which can be non-algebraically closed and of any characteristic). Then, the cup product map $\bigwedge^mH^1_{\dR}(A/k)\longrightarrow H^m_{\dR}(A/k)$ induces an isomorphism of filtered vector spaces.
\end{Lemma}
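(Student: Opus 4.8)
The plan is to reduce the statement to the well-known degeneration and cup-product facts for abelian varieties over a field (Lemma \ref{BBMLemma}), and then to track the Hodge filtration through the cup-product isomorphism $\cup_m$. Since $\cup_m\colon\bigwedge^m H^1_{\dR}(A/k)\to H^m_{\dR}(A/k)$ is already known to be an isomorphism of vector spaces, and is a morphism of filtered vector spaces by \cite[\href{https://stacks.math.columbia.edu/tag/0FM7}{Tag 0FM7}], it suffices to check that it is strictly compatible with the filtrations, e.g.\ by comparing dimensions of the graded pieces on both sides. First I would record the description of the Hodge filtration on $H^1_{\dR}(A/k)$: we have a two-step filtration $0\subset F^1 H^1_{\dR}(A/k)\subset F^0 H^1_{\dR}(A/k)=H^1_{\dR}(A/k)$ with $F^1 H^1_{\dR}(A/k)=H^0(A,\Omega^1_{A/k})$ and $\mathrm{Gr}^0 H^1_{\dR}(A/k)=H^1(A,\OCal_A)$; these are vector spaces of dimension $g=\dim A$ each, and the whole space has dimension $2g$. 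This is exactly $E_1$-degeneration of the Hodge--de Rham spectral sequence in degree $1$, which is part of Lemma \ref{BBMLemma}(2).

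Next I would compute the two sides of the comparison at the level of graded pieces. On the left, the exterior-power filtration on $\bigwedge^m H^1_{\dR}(A/k)$ has $\mathrm{Gr}^k\bigl(\bigwedge^m H^1_{\dR}(A/k)\bigr)\iso \bigwedge^k F^1 H^1_{\dR}(A/k)\otimes \bigwedge^{m-k}\mathrm{Gr}^0 H^1_{\dR}(A/k)$, which has dimension $\binom{g}{k}\binom{g}{m-k}$. On the right, the Hodge filtration on $H^m_{\dR}(A/k)$ has graded pieces $\mathrm{Gr}^k H^m_{\dR}(A/k)=H^{m-k}(A,\Omega^k_{A/k})$, and since $\Omega^1_{A/k}$ is free of rank $g$ we get $H^{m-k}(A,\Omega^k_{A/k})\iso \bigwedge^k H^0(A,\Omega^1_{A/k})\otimes H^{m-k}(A,\OCal_A)$, of dimension $\binom{g}{k}\cdot\binom{g}{m-k}$, where $\dim H^{m-k}(A,\OCal_A)=\binom{g}{m-k}$ again follows from Lemma \ref{BBMLemma} (the cup-product isomorphism $\bigwedge^{m-k}H^1(A,\OCal_A)\iso H^{m-k}(A,\OCal_A)$ and $\dim H^1(A,\OCal_A)=g$). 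So the graded dimensions agree in every degree $k$.

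Finally I would conclude: $\cup_m$ is an isomorphism of vector spaces carrying $F^k$ into $F^k$, hence induces injections $\mathrm{Gr}^k\bigl(\bigwedge^m H^1_{\dR}(A/k)\bigr)\hookrightarrow \mathrm{Gr}^k H^m_{\dR}(A/k)$ for each $k$; since the total dimensions of source and target coincide and the graded dimensions coincide in each degree, each of these injections must be an isomorphism, and a filtered morphism of finite-dimensional filtered vector spaces which is an isomorphism on each graded piece is an isomorphism of filtered vector spaces (equivalently, $F^k(\mathrm{source})=\cup_m^{-1}(F^k(\mathrm{target}))$). The main point to get right is the identification of the graded pieces of the exterior-power filtration with the ``expected'' tensor products of graded pieces of $H^1_{\dR}$ — this is the standard fact that for a filtered vector space with filtration of length two the induced filtration on $\bigwedge^m$ is the one whose associated graded is $\bigoplus_k \bigwedge^k\mathrm{Gr}^1\otimes\bigwedge^{m-k}\mathrm{Gr}^0$ — together with the Künneth-type computation of $H^{m-k}(A,\Omega^k_{A/k})$ from Lemma \ref{BBMLemma}; neither is difficult, but one should state them carefully. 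I expect no serious obstacle; the only subtlety is making sure the map $\cup_m$ on graded pieces is literally the tensor product of the two cup-product maps $\bigwedge^k F^1 H^1_{\dR}\to H^0(\Omega^k)$ and $\bigwedge^{m-k}\mathrm{Gr}^0 H^1_{\dR}\to H^{m-k}(\OCal)$, which follows from multiplicativity of the Hodge--de Rham spectral sequence.
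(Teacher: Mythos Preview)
Your approach is essentially the same as the paper's: reduce to a dimension count on the graded pieces, using $E_1$-degeneration and the known Hodge numbers of an abelian variety. The paper cites \cite[\S13 Corollary 2]{Mumford} for $\dim H^{m-k}(A,\OCal_A)=\binom{g}{m-k}$ rather than deducing it from Lemma~\ref{BBMLemma}; note that your derivation of this via ``the cup-product isomorphism $\bigwedge^{m-k}H^1(A,\OCal_A)\iso H^{m-k}(A,\OCal_A)$ follows from Lemma~\ref{BBMLemma}'' is slightly circular, since extracting that statement from the de Rham cup-product isomorphism amounts to knowing the $\mathrm{Gr}^0$ case of what you are proving.

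One genuine slip in your final paragraph: it is \emph{not} true that a filtered morphism which is an isomorphism on underlying vector spaces automatically induces injections on graded pieces (take $V=W=k^2$, $F^1V=\langle e_1\rangle$, $F^1W=W$, $\phi=\id$; then $\mathrm{Gr}^0\phi$ is the zero map $k\to 0$). The correct argument, which you have all the ingredients for, is simply: $\cup_m$ is injective, hence restricts to an injection $F^k(\text{source})\hookrightarrow F^k(\text{target})$ for each $k$; since the graded dimensions agree in every degree, so do the filtered dimensions, and an injection between finite-dimensional spaces of equal dimension is an isomorphism. This is exactly how the paper phrases it (``it suffices to compare the dimensions of the filtered pieces'').
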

\begin{proof}
By Lemma \ref{BBMLemma} and the subsequent discussions, it suffices to compare the dimensions of the filtered pieces, or equivalently the dimensions of the graded pieces. Let $g=\dim A.$ Since $\dim H^1_{\dR}(A/k)=2g, \dim Fil^1H^1_{\dR}(A/k)=g,$ we see that $Gr^i(\bigwedge^mH^1_{\dR}(A/k))=\binom{g}{i}\binom{g}{m-i}$.
    By 2.5.2 \cite{BBM}, the Hodge to de Rham spectral sequence degenerates at the $E_1$ page, so 
    \[Gr^i(H^m_{\dR}(A/k))=\dim H^{m-i}(A,\Omega^i)=\binom{g}{i}\dim H^{m-i}(A,\OCal_A)=\binom{g}{i}\binom{g}{m-i},\] where the last equality used 
    \cite[\S13 Corollary 2]{Mumford}.
\end{proof}
\subsection{Grothendieck-Messing theory via Mukai structures}
Grothendieck-Messing theory describes lifts of abelian varieties via the data of Hodge filtrations.
In this subsection, specializing to the case of canonical lifts, we rewrite the theory in terms of the Mukai $F$-isocrystals. We assume throughout the subsection that $k$ is an algebraically closed field of characteristic $p>0$, and we will use the notations $W=W(k),K=\Frac W$. We briefly recall the notions of $F$-(iso)crystals.
\begin{Def}
\begin{enumerate}
    \item
    An \textbf{$F$-crystal} $(M,F)$ over $k$ consists of a finite free $W$-module $M$ and an injective $\sigma$-linear map $F:M\longrightarrow M$, where $\sigma:W\longrightarrow W$ is the Witt vector Frobenius operator.
    \item An \textbf{$F$-isocrystal} $(V,F)$ over $k$ consists of a finite dimensional $K$-vector space $V$ and an injective $\sigma$-linear map $F:V\longrightarrow V$.
    \item When $(M,F:M\longrightarrow M)$ is an $F$-crystal over $k$, for each non-positive integer $i\in\integ_{\leq 0}$ , its \textbf{$i$-th Tate twist} $M(i)$ is the $F$-crystal $(M,p^{-i}F:M\longrightarrow M)$.
    \item When $(V,F:V\longrightarrow V)$ is an $F$-isocrystal over $k$, for each integer $i\in\integ$ , its \textbf{$i$-th Tate twist} $V(i)$ is the $F$-isocrystal $(V,p^{-i}F:V\longrightarrow V)$. 
    \end{enumerate}
\end{Def}
\begin{Ex}
    If $X$ is a smooth proper $k$-variety, for each $i$, the crystalline cohomology modulo torsion $H_{\cris}^i(X/W)/(\rm torsion)$ with its Frobenius action is an $F$-crystal over $k$, and $H^i_{\cris}(X/K)$ is an $F$-isocrystal over $k$, where the injectivity condition of $F$-(iso)crystals follows from Poincar\'e duality.
\end{Ex}

We first note the following $F$-crystal analogue of Proposition \ref{HdgFilWedg} (which one can essentially deduce from this proposition by the compatibility of cup product and Berthelot-Ogus comparison isomorphism):
\begin{Lemma}\label{FCrysWedg}
    Let $A$ be an abelian variety over a perfect field $k$ of characteristic $p>0$. For each non-negative integer $m$, the cup product induces an isomorphism
    \[
\cup_m:\bigwedge^mH_{\cris}^1(A/W)\longrightarrow H_{\cris}^m(A/W)
    \] of $F$-crystals.
\end{Lemma}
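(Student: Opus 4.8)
The plan is to deduce this $F$-crystal statement from the already-established filtered de Rham isomorphism (Proposition \ref{HdgFilWedg}, or rather its field version Lemma \ref{HdgFilField}) together with the compatibility of crystalline cohomology with cup products. First I would recall that for a smooth proper $k$-variety, the crystalline cohomology $H^i_{\cris}(A/W)$ carries a natural cup-product structure, and that for an abelian variety it is torsion-free (this follows, e.g., from the fact that $H^1_{\cris}(A/W)$ is the Dieudonné module of the $p$-divisible group, hence finite free over $W$, and the de Rham–crystalline comparison together with Lemma \ref{BBMLemma} shows all $H^m_{\cris}(A/W)$ are finite free of the expected rank). Thus the cup-product map
\[
\cup_m:\bigwedge^m H^1_{\cris}(A/W)\longrightarrow H^m_{\cris}(A/W)
\]
is a $W$-linear map between finite free $W$-modules of equal rank $\binom{2g}{m}$, and it is visibly compatible with the Frobenius operators on both sides (Frobenius is a ring map on $\bigoplus_i H^i_{\cris}(A/W)$, so $F(x\wedge y)=F(x)\wedge F(y)$ under the identification; here one must be slightly careful that the wedge power of an $F$-crystal is given the Frobenius $F(v_1\wedge\cdots\wedge v_m)=Fv_1\wedge\cdots\wedge Fv_m$, which is exactly the convention making $\cup_m$ Frobenius-equivariant).

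The remaining point is that $\cup_m$ is an \emph{isomorphism} of $W$-modules. For this I would reduce modulo $p$, or better, invoke the de Rham picture: by crystalline base change there is a canonical isomorphism $H^i_{\cris}(A/W)\tens_W k\iso H^i_{\dR}(A/k)$ (using that the cohomology is torsion-free and that $\Spec k\hookrightarrow\Spec W$ is a PD-thickening, cf.\ the Berthelot–Ogus comparison), and this isomorphism is compatible with cup products. Hence the reduction mod $p$ of $\cup_m$ is identified with the de Rham cup-product map $\bigwedge^m H^1_{\dR}(A/k)\to H^m_{\dR}(A/k)$, which is an isomorphism by Lemma \ref{HdgFilField} (even an isomorphism of filtered vector spaces, though here we only need that it is an isomorphism of vector spaces). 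A $W$-linear map of finite free $W$-modules of the same rank that becomes an isomorphism after $\tens_W k$ is itself an isomorphism by Nakayama's lemma (surjectivity mod $p$ gives surjectivity over $W$ since the target is finite, and then a surjection between finite free modules of the same rank over a local — even an integral domain — ring is injective). Therefore $\cup_m$ is an isomorphism of $W$-modules, and since we already checked Frobenius-equivariance, it is an isomorphism of $F$-crystals.

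The main obstacle, such as it is, is bookkeeping rather than substance: one must pin down the correct Frobenius structure on the exterior power $\bigwedge^m H^1_{\cris}(A/W)$ so that the statement is not vacuous or false — namely the divided/undivided Frobenius conventions on Tate twists versus wedge powers — and one must make sure the comparison isomorphism between crystalline and de Rham cohomology is compatible with cup products (this is standard but worth a citation, e.g.\ to Berthelot–Ogus or the appendix \S\ref{secChern} of this paper, which addresses precisely the compatibility of de Rham and crystalline Chern classes and the underlying multiplicative comparison). Once these compatibilities are in hand, the argument is exactly the "finite free, equal rank, isomorphism mod $p$, apply Nakayama" pattern, parallel to how Proposition \ref{HdgFilWedg} was reduced to Lemma \ref{HdgFilField}. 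I would therefore present the proof as: (i) both sides are finite free $W$-modules of rank $\binom{2g}{m}$ with Frobenius, and $\cup_m$ is Frobenius-equivariant by multiplicativity of crystalline cup product; (ii) mod $p$, $\cup_m$ becomes the de Rham cup-product map, which is an isomorphism by Lemma \ref{HdgFilField}; (iii) Nakayama's lemma upgrades this to an isomorphism over $W$, hence an isomorphism of $F$-crystals.
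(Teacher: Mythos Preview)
Your proposal is correct and follows essentially the same approach that the paper indicates in its parenthetical remark: deduce the $F$-crystal isomorphism from the de Rham version via the compatibility of the Berthelot--Ogus comparison with cup products, then upgrade by Nakayama. The paper points to Proposition~\ref{HdgFilWedg} rather than Lemma~\ref{HdgFilField}, but the field case you invoke is exactly what is needed after reduction modulo $p$, so there is no substantive difference.
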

Specializing to the case of an ordinary abelian variety $A$ of dimension $g$ over $k$,
we recall from \cite{DelCris} that we have a unique direct sum decomposition
$H_{\cris}^1(A/W)=P_1\oplus P_0$, where these $W$-submodules are defined by $P_1\defeq H_{\cris}^1(A/W)^{F=p}\tens_{\integ_p}W\iso W(-1)^{\oplus g}$ and $P_0\defeq H_{\cris}^1(A/W)^{F=1}\tens_{\integ_p}W\iso W^{\oplus g}$. To see this, we can use ordinarity to produce the unique lift of the conjugate filtration (\cite[Proposition 1.3.2(iii)]{DelCris}) and then argue that it uniquely splits (\cite[Remarque 1.3.4]{DelCris}). Here, we note that the algebraic closedness of $k$ implies that any unit-root $F$-crystal is trivial (\cite[Proposition 1.2.2]{DelCris}, \cite[Proposition 4.1.1]{KatzPAdic}).
\begin{Def}[{Slope filtration}]
Let $(H,F)$ be an $F$-crystal with the property that $H=\bigoplus_{s\in\integ_{\geq0}} H^{F=p^s}\tens_{\integ_p}W$. Then $H$ has the \textbf{slope filtration} $Fil_{\slope}^*$ given by 
\[Fil^i_{\slope}H\defeq \bigoplus_{s\geq i} H^{F=p^s}\tens_{\integ_p}W\subset H.\]
By inverting $p$ in the slope filtration, we also obtain a \textbf{``rational'' slope filtration} on the induced $F$-isocrystal $(H[1/p],F[1/p])$.
\end{Def}
\begin{Ex}\label{ExHdgFil}
Let $A$ be an ordinary abelian variety over $k$.
The slope filtration $Fil_{\slope}^*$ on $H_{\cris}^1(A/W)$ is the descending filtration defined by \[
Fil^0_{\slope}H_{\cris}^1(A/W)=H_{\cris}^1(A/W),\]
\[Fil^1_{\slope}H_{\cris}^1(A/W)=P_1,\]
\[Fil^2_{\slope}H_{\cris}^1(A/W)=0.
\]
The mod $p$ reduction of the slope filtration coincides with the Hodge filtration of $H_{\dR}^1(A/k)$ in view of \cite[(1.3.1.2)]{DelCris} (this description of the Hodge filtration boils down to Mazur's theorem, as explained in \cite[1.3.1]{DelCris}).
\end{Ex}
Lemma \ref{FCrysWedg} immediately implies the following:
\begin{Prop}\label{SlpFilWedg}
Let $A$ be an ordinary abelian variety over $k$.
For a non-negative integer $m$, if we endow $H_{\cris}^m(A/W)$ with the slope filtration and $\bigwedge^mH_{\cris}^1(A/W)$ with the exterior power slope filtration, the isomorphism in Lemma \ref{FCrysWedg}
    defines an isomorphism of filtered $W$-modules:
    \[
\cup_m:\bigwedge^mH_{\cris}^1(A/W)\longrightarrow H_{\cris}^m(A/W).
    \] 
\end{Prop}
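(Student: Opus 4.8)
The plan is to deduce the statement straight from Lemma \ref{FCrysWedg}. The key point is that the slope filtration on an $F$-crystal $H$ of the ``completely decomposed'' type $H=\bigoplus_s H^{F=p^s}\tens_{\integ_p}W$ is intrinsic to the $F$-crystal structure, so any isomorphism of such $F$-crystals automatically carries the slope filtration of the source onto the slope filtration of the target. Since Lemma \ref{FCrysWedg} tells us that $\cup_m$ is an isomorphism of $F$-crystals, all that remains is to check, on the source $\bigwedge^m H_{\cris}^1(A/W)$, that the exterior power slope filtration agrees with the slope filtration attached to its own $F$-crystal structure.

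First I would use the ordinary decomposition $H^1\defeq H_{\cris}^1(A/W)=P_0\oplus P_1$ recalled before Example \ref{ExHdgFil}, with $P_0=T_0\tens_{\integ_p}W$, $P_1=T_1\tens_{\integ_p}W$, $T_0=(H^1)^{F=1}$, $T_1=(H^1)^{F=p}$. Then $\bigwedge_W^m H^1=\bigoplus_{i+j=m}(\bigwedge_W^i P_1)\tens_W(\bigwedge_W^j P_0)$, and one computes $\bigwedge_W^i P_1\iso(\bigwedge_{\integ_p}^i T_1)\tens_{\integ_p}W$ with Frobenius acting as $p^i$ times a Frobenius-semilinear automorphism, while $\bigwedge_W^j P_0$ is a unit-root crystal. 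Hence $\bigwedge_W^m H^1$ is again of completely decomposed type, with $(\bigwedge_W^m H^1)^{F=p^i}\tens_{\integ_p}W=(\bigwedge_W^i P_1)\tens_W(\bigwedge_W^{m-i}P_0)$, and its intrinsic slope filtration is $Fil_{\slope}^k(\bigwedge_W^m H^1)=\bigoplus_{i\geq k}(\bigwedge_W^i P_1)\tens_W(\bigwedge_W^{m-i}P_0)$.

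Next I would match this with the exterior power slope filtration. By Example \ref{ExHdgFil} the slope filtration on $H^1$ itself is $Fil_{\slope}^0=H^1\supset Fil_{\slope}^1=P_1\supset Fil_{\slope}^2=0$, so a wedge $Fil_{\slope}^{i_1}H^1\wedge\cdots\wedge Fil_{\slope}^{i_m}H^1$ with $i_1+\cdots+i_m\geq k$ is nonzero only when each $i_l\in\{0,1\}$ and at least $k$ of the $i_l$ equal $1$; the span of all such wedges is exactly $\bigoplus_{i\geq k}(\bigwedge_W^i P_1)\tens_W(\bigwedge_W^{m-i}P_0)$. This is precisely $F^k\bigwedge_W^m H^1$ in the notation of the proposition (the exterior power of the slope filtration on $H^1$), so the exterior power slope filtration on the source coincides with its intrinsic slope filtration. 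Combining the two previous paragraphs with functoriality of slope filtrations for isomorphisms of completely decomposed $F$-crystals shows that $\cup_m$ is filtered, and as $\cup_m$ is already an isomorphism this makes it an isomorphism of filtered $W$-modules.

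I do not anticipate a genuine obstacle. The only mildly delicate bookkeeping is verifying that $(\bigwedge_W^m H^1)^{F=p^i}$ is computed factorwise as $(\bigwedge_{\integ_p}^i T_1)\tens_{\integ_p}(\bigwedge_{\integ_p}^{m-i}T_0)$ rather than being larger; this holds because a finite direct sum of Tate-twisted trivial crystals has its $F=p^i$ part computed summand by summand. If one prefers to avoid even this, the cleanest alternative is to invert $p$ first: the rational slope decomposition is tautologically functorial for $F$-isocrystals, giving the filtered isomorphism after $\tens_W K$, and one then recovers the integral statement by intersecting the rational slope pieces with the lattices $\bigwedge_W^m H^1$ and $H_{\cris}^m(A/W)$, using that $\cup_m$ already matches these lattices.
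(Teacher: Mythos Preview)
Your proposal is correct and is exactly the unpacking of what the paper has in mind: the paper simply writes ``Lemma \ref{FCrysWedg} immediately implies the following'' with no further argument, and your proof supplies the routine verification (intrinsicness of the slope filtration on completely decomposed $F$-crystals, plus the check that the exterior power filtration on $\bigwedge^m H^1$ agrees with its intrinsic slope filtration) that makes this immediacy explicit.
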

The classical Grothendieck-Messing theory describes the canonical lift as follows:
\begin{Prop}[{\cite[2.1.3]{DelCris}}]
    Let $A$ be an ordinary abelian variety over $k$, and let $\ACal$ be a lift of $A$ over $W$. 
    Then, $\ACal$ is the canonical lift if and only if the Hodge filtration on $H_{\cris}^1(A/W)$ induced via the Berthelot-Ogus comparison isomorphism $H_{\cris}^1(A/W)\iso H_{\dR}^1(\ACal/W)$ coincides with the slope filtration.
\end{Prop}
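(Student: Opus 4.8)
The plan is to deduce this from Grothendieck--Messing deformation theory combined with the Serre--Tate description of the canonical lift. The mechanism is that Grothendieck--Messing, together with Serre--Tate, sets up a bijection between lifts $\ACal$ of $A$ over $W$ and direct summands $F\subset H_{\cris}^1(A/W)$ that reduce modulo $p$ to the Hodge filtration of $H_{\dR}^1(A/k)$; here one uses the identification, via Berthelot--Ogus, of $H_{\cris}^1(A/W)$ with the evaluation on $W$ of the Dieudonn\'e crystal of $A[p^\infty]$, under which the summand attached to a given lift $\ACal$ is precisely the Hodge filtration of $H_{\dR}^1(\ACal/W)$. Granting this, it suffices to prove that the canonical lift is the one whose associated summand is the slope filtration $P_1=H_{\cris}^1(A/W)^{F=p}\tens_{\integ_p}W$; the biconditional in the statement then follows at once from injectivity of this correspondence.

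First I would assemble the correspondence carefully. By Serre--Tate, deformations of $A$ over $W$ coincide with deformations of the $p$-divisible group $A[p^\infty]$; since $W$ is $p$-adically complete and $(p)$ carries the standard divided powers, one applies Grothendieck--Messing over each nilpotent thickening $W/p^{n+1}\to W/p^n$ and passes to the limit to obtain lifts of the Hodge filtration inside $\mathbb{D}(A[p^\infty])(W)\iso H_{\cris}^1(A/W)$. By Example \ref{ExHdgFil} --- that is, Mazur's theorem in the form \cite[(1.3.1.2)]{DelCris} --- the slope filtration $P_1$ is a direct summand of $H_{\cris}^1(A/W)$ whose mod-$p$ reduction is the Hodge filtration, so it is a legitimate Grothendieck--Messing datum and hence corresponds to a well-defined lift $\ACal_0$ of $A$ over $W$.

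It then remains to identify $\ACal_0$ with the canonical lift. Using the ordinary decomposition $H_{\cris}^1(A/W)=P_1\oplus P_0$ into its slope-$1$ part $P_1\iso W(-1)^{\oplus g}$ and its unit-root part $P_0\iso W^{\oplus g}$, the $p$-divisible group $G$ over $W$ whose Dieudonn\'e module carries the Hodge filtration $P_1$ is exactly the one for which the filtration is compatible with the slope splitting, so that $G$ splits as the product of its multiplicative part (matching $P_1$, which carries $Fil^1$) and its \'etale part (matching $P_0$, on which $Fil^1$ vanishes); this split lift is the Serre--Tate canonical lift, which coincides with Deligne's canonical lift sitting at the origin of the canonical coordinates on the formal moduli space (the matching of the two constructions being part of \cite{DelCris}; see also \cite{KatzST}). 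Conversely, any lift whose Hodge filtration equals the slope filtration has Grothendieck--Messing datum $P_1$ and hence equals $\ACal_0$ by injectivity, so both implications follow. I expect the work to lie essentially entirely in the bookkeeping: the main obstacle is not a hard estimate but gluing the three equivalences --- Serre--Tate, Grothendieck--Messing, and Berthelot--Ogus --- compatibly, and making the limit passage from the $W/p^n$-level statement to one over $W$ itself, while fixing once and for all the normalization of ``canonical lift'' one takes as the definition.
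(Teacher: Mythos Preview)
Your argument is correct and is essentially the standard proof, combining Serre--Tate with Grothendieck--Messing and Mazur's theorem to identify the canonical lift via the slope filtration. However, the paper itself gives no proof of this proposition: it is simply quoted as \cite[2.1.3]{DelCris}, so there is no independent argument in the paper to compare against. What you have written is, in outline, the content of that reference (together with \cite{KatzST}), so your proposal supplies precisely the justification the paper defers to the literature.
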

In the sequel, we will reformulate this theory in terms of Mukai $F$-isocrystals. 
The following simple lemma implies that we may work with Hodge filtrations on $F$-isocrystals instead of $F$-crystals:
\begin{Lemma}
    Let $M$ be a finite free $W$-submodule. Let $F_1,F_2\subset M$ be two submodules such that they induce the same submodules inside $M\tens_{W}K$ and $M\tens_{W}k$. Then, $F_1=F_2$. 
\end{Lemma}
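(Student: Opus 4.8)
The plan is to prove the two inclusions $F_2\subseteq F_1$ and $F_1\subseteq F_2$ by a Nakayama argument that invokes each hypothesis exactly once; by symmetry it suffices to explain the first. I would set $N\defeq (F_1+F_2)/F_1$, a finitely generated $W$-module. Tensoring the defining sequence $0\to F_1\to F_1+F_2\to N\to 0$ with $K$ and using that $F_1\otimes_W K=F_2\otimes_W K$ inside $M\otimes_W K$, the first hypothesis forces $N\otimes_W K=0$, so $N$ is torsion; over the discrete valuation ring $W$ this means $N=0$ if and only if $N/pN=0$. Unwinding the latter, $N/pN=0$ is precisely the assertion that $F_2\subseteq F_1+p(F_1+F_2)$.

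To reach that inclusion I would take $x\in F_2$ and let $\overline{x}$ denote its image in $M\otimes_W k=M/pM$. The second hypothesis says that the images of $F_1$ and $F_2$ in $M/pM$ coincide, so there is $y\in F_1$ with $\overline{x}=\overline{y}$, i.e. $x-y\in pM$. Since also $x-y\in F_1+F_2$, this gives $x-y\in pM\cap(F_1+F_2)$, and hence $x=y+(x-y)\in F_1+\big(pM\cap(F_1+F_2)\big)$.

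The hard part—and the main obstacle—will be to improve $pM\cap(F_1+F_2)$ to $p(F_1+F_2)$, i.e. to divide $x-y$ by $p$ inside $F_1+F_2$ rather than merely inside $M$; this is exactly the step where the integral structure enters, beyond the rational and mod-$p$ data. It amounts to checking that $F_1+F_2$ is saturated in $M$ (equivalently, that $M/(F_1+F_2)$ is torsion-free). In the setting where the lemma is used the $F_i$ are pieces of Hodge/slope filtrations, hence direct summands of $M$ sharing the common rationalization $V\defeq F_1\otimes_W K=F_2\otimes_W K$; each $F_i$ therefore equals its saturation $V\cap M$, so $F_1+F_2=V\cap M$ is saturated. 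Granting this, $x-y\in p(F_1+F_2)\subseteq F_1+p(F_1+F_2)$, whence $N/pN=0$, $N=0$, and $F_2\subseteq F_1$; the symmetric argument yields $F_1\subseteq F_2$, so $F_1=F_2$. I note that once one records that each $F_i$ is saturated with rationalization $V$, the identity $F_i=V\cap M$ already gives $F_1=F_2$ outright; the Nakayama argument above is simply the form of the proof that uses both stated hypotheses literally, and it makes transparent where the saturation of the filtration pieces is the one fact that must be supplied.
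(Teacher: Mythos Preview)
Your diagnosis is correct: the lemma as stated is false, and you have put your finger on exactly the missing hypothesis. A counterexample is $M=W$, $F_1=pW$, $F_2=p^2W$: both induce all of $K$ rationally and the zero subspace modulo $p$, yet $F_1\neq F_2$.

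The paper's own proof has the same gap, at the step you call the ``hard part''. After reducing to $F_1\subset F_2$ and setting $Q=F_2/F_1$, the paper claims that the long exact sequence for $(-)\otimes^L_W k$ together with flatness of $F_2$ forces $Q$ to be flat. But flatness of $F_2$ only yields
\[
0\longrightarrow \mathrm{Tor}_1^W(Q,k)\longrightarrow F_1\otimes_W k\longrightarrow F_2\otimes_W k\longrightarrow Q\otimes_W k\longrightarrow 0,
\]
and to conclude $Q\otimes_W k=0$ one needs the middle map to be surjective. The hypothesis on images in $M\otimes_W k$ gives this only if $F_2\otimes_W k\hookrightarrow M\otimes_W k$, i.e.\ if $F_2$ (which after the reduction is $F_1+F_2$) is saturated in $M$---precisely the condition you isolated.

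Your proposed fix is also the right one. In the only application of the lemma (the corollary recasting the canonical-lift criterion rationally), the two submodules are the Hodge and slope filtration pieces of $H^1_{\cris}(A/W)$, both direct summands. As you note, once both $F_i$ are saturated with common rationalization $V$, the identity $F_i=V\cap M$ already gives $F_1=F_2$ outright, and the mod-$p$ hypothesis becomes superfluous. If one prefers a version of the lemma that genuinely uses both hypotheses, it suffices to assume that at least one $F_i$ is saturated; then $F_1+F_2$ coincides with that $F_i$, and either your Nakayama argument or the paper's long exact sequence goes through.
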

\begin{proof}
    Instead of comparing the pair $(F_1,F_2)$, we may compare the pairs $(F_1,F_1+F_2)$ and $(F_2,F_1+F_2)$, so that we can assume $F_1\subset F_2$. Consider the obvious short exact sequence $0\longrightarrow F_1\longrightarrow F_2\longrightarrow Q\longrightarrow0$. Applying $(-)\tens_{W}K$, we see that $Q$ is $p$-torsion. If we consider the long exact sequence for $(-)\tens^L_{W}k$ and use the flatness of $F_2$, we see that $Q$ must be flat as well.
    Thus, $Q=0.$ 
\end{proof}
\begin{Cor}\label{rationalGM}
 Let $A$ be an ordinary abelian variety over $k$, and let $\ACal$ be a lift of $A$ over $W$. 
 Then, $\ACal$ is the canonical lift if and only if the Hodge filtration on $H_{\cris}^1(A/K)$ induced via the isomorphism $H_{\cris}^1(A/K)\iso H_{\dR}^1(\ACal_{\eta}/K)$ coincides with the rational slope filtration.
\end{Cor}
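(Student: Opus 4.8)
The plan is to reduce the rational assertion to its integral counterpart, the Grothendieck--Messing description \cite[2.1.3]{DelCris} recalled above, using the lemma just proved. First I would observe that the displayed isomorphism $H^1_{\cris}(A/K)\iso H^1_{\dR}(\ACal_\eta/K)$ is the base change to $K$ of the Berthelot--Ogus isomorphism $H^1_{\cris}(A/W)\iso H^1_{\dR}(\ACal/W)$ (flat base change along $W\to K$ on the de Rham side), so the Hodge filtration on $H^1_{\cris}(A/K)$ is obtained from the Hodge filtration on $H^1_{\cris}(A/W)$ by inverting $p$; likewise the rational slope filtration is the slope filtration with $p$ inverted. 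Hence the forward implication is immediate: if $\ACal$ is the canonical lift then by \cite[2.1.3]{DelCris} the Hodge and slope filtrations coincide inside $H^1_{\cris}(A/W)$, and therefore also after $\tens_W K$.

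For the converse I would apply the preceding lemma with $M=H^1_{\cris}(A/W)$, with $F_1$ the Hodge filtration $Fil^1 H^1_{\dR}(\ACal/W)$ transported to $M$ along Berthelot--Ogus, and $F_2=Fil^1_{\slope}M=P_1$. By hypothesis $F_1$ and $F_2$ induce the same subspace of $M\tens_W K$, so to invoke the lemma it remains to check that they induce the same subspace of $M\tens_W k=H^1_{\dR}(A/k)$. On the slope side this reduction is the Hodge filtration of $H^1_{\dR}(A/k)$ by Example \ref{ExHdgFil} (Mazur's theorem, via \cite[(1.3.1.2)]{DelCris}). On the Hodge side, the mod-$p$ reduction of the Berthelot--Ogus isomorphism is the standard identification $M\tens_W k\iso H^1_{\dR}(A/k)$, and the Hodge filtration on $H^1_{\dR}(\ACal/W)$ reduces mod $p$ to the Hodge filtration on $H^1_{\dR}(A/k)$ by the base-change compatibility of de Rham cohomology and its Hodge filtration for abelian schemes (Lemma \ref{BBMLemma}, Lemma \ref{HdgFilBaseChange}). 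Thus $F_1\tens_W k=F_2\tens_W k$ as well; the lemma then gives $F_1=F_2$, i.e.\ the Hodge filtration equals the slope filtration inside $H^1_{\cris}(A/W)$, and \cite[2.1.3]{DelCris} tells us $\ACal$ is the canonical lift.

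The only step requiring genuine care is the compatibility of the Berthelot--Ogus comparison with reduction mod $p$ (and with flat base change to $K$); this is standard, but I would pin down an explicit reference rather than leave it implicit. Granting that, together with the lemma above, the argument is purely formal.
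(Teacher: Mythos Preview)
Your proposal is correct and follows essentially the same route as the paper: the paper's proof is the one-liner ``This follows from the previous lemma, Example \ref{ExHdgFil} and the base change property of Hodge filtration (Lemma \ref{HdgFilBaseChange}),'' and you have simply unpacked exactly these three ingredients. Your caveat about pinning down the compatibility of the Berthelot--Ogus isomorphism with reduction mod $p$ is reasonable but does not change the argument.
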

\begin{proof}
    This follows from the previous lemma, Example \ref{ExHdgFil} and the base change property of Hodge filtration (Lemma \ref{HdgFilBaseChange}).
\end{proof}
Next, we introduce the Mukai structure on cohomologies.
The main motivation for considering this structure comes from the fact that this is preserved under Fourier-Mukai equivalences (e.g.\ see the proof of Proposition \ref{CanFMPAV}).
Recall that when $(V,F^\bullet)$ is a filtered vector space, for any integer $i$, its $i$-th Tate twist $V(i)$ is the filtered vector space $(V,F_{(i)}^\bullet)$ with $F_{(i)}^jV=F^{i+j}V$.
\begin{Def}[Mukai structure on the odd de Rham and crystalline cohomologies, cf.\ \cite{LO}]
\leavevmode
Let $A$ be an abelian variety over $k$ as before. 
\begin{enumerate}
    \item 
We call the $F$-isocrystal $\bigoplus_{i=0}^{g-1}H_{\cris}^{2i+1}(A/K)(i)$ the \textbf{odd crystalline realization, or the odd Mukai $F$-isocrystal} of $A$.
\item Additionally assume that $A$ is ordinary for a moment. Then the odd crystalline realization $\bigoplus_{i=0}^{g-1}H_{\cris}^{2i+1}(A/K)(i)$ also carries the structure of a filtered vector space if we endow it with the rational slope filtration. Here, there is no ambiguity about the Tate twist $(i)$ (One interpretation shifts the filtration, and the other changes the $F$-isocrystal structure) because both interpretations yield the same filtration.
\item 
    Let $\ACal$ be a lift of $A$ over $W$. We call the filtered $K$-vector space $\bigoplus_{i=0}^{g-1}H_{\dR}^{2i+1}(\ACal_{\eta}/K)(i)$ the \textbf{odd de Rham realization, or the odd Mukai de Rham structure} of $\ACal_{\eta}$.
\end{enumerate}
\end{Def}
\begin{Rem}
    Both realizations carry the Poincar\'e pairing structures. 
    On one side, there is a perfect Poincar\'e pairing coming from the cup product on the crystalline cohomology $H_{\cris}^*(A/W)$ (\cite{Berth}; however the Tate twists used here requires the rational structure), and on the other side, we have the Poincar\'e pairing respecting the 
    Hodge filtration (\cite[\href{https://stacks.math.columbia.edu/tag/0FM7}{Tag 0FM7}]{stacks-project}).
    The Berthelot-Ogus comparison isomorphism is compatible with cup product, 
    so the pairing structure on these realizations are compatible. 
    The usual Poincar\'e pairing for odd de Rham cohomologies takes the form
    \[H_{\dR}^{2i+1}(\ACal_\eta/K)\tens H_{\dR}^{2(g-i-1)+1}(\ACal_\eta/K)\longrightarrow K(-g)
    \] for each $i$. By taking an alternating sum of these maps with Tate twists,
    we obtain
    \[\bigoplus_{i=0}^{g-1} \left(H_{\dR}^{2i+1}(\ACal_\eta/K)(i)\tens H_{\dR}^{2(g-i-1)+1}(\ACal_\eta/K)(g-i-1)\right)\longrightarrow K(-1).
    \] From this map, we obtain the Poincar\'e pairing on the odd Mukai de Rham structure:
\[
\left(\bigoplus_{i=0}^{g-1}H_{\dR}^{2i+1}(\ACal_{\eta}/K)(i)\right)
\tens \left(\bigoplus_{i=0}^{g-1}H_{\dR}^{2i+1}(\ACal_{\eta}/K)(i)\right)
\longrightarrow K(-1).
\]     
    The case of Mukai $F$-isocrystals is similar.
\end{Rem}
The key reformulation of the Grothendieck-Messing theory (for canonical lifts) in terms of the Mukai structure is the following:
\begin{Prop}\label{CanLiftViaMukai}
    Let $\ACal$ be a lift of $A$ over $W$. $\ACal$ is the canonical lift if and only if the Berthelot-Ogus comparison isomorphism for the (odd) crystalline and de Rham realiztions identify the slope filtration and the Hodge filtration.
\end{Prop}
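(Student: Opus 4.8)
The plan is to deduce this from the degree-one case, Corollary \ref{rationalGM}, by propagating filtration identifications through cup products, Tate twists, and direct sums.

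First I would treat the forward direction. Assume $\ACal$ is the canonical lift. By Corollary \ref{rationalGM}, the Berthelot--Ogus comparison isomorphism $H^1_{\cris}(A/K)\iso H^1_{\dR}(\ACal_\eta/K)$ carries the rational slope filtration onto the Hodge filtration. A filtered isomorphism of vector spaces induces a filtered isomorphism of exterior powers (equipped with the exterior power filtrations), and the Berthelot--Ogus isomorphism is compatible with cup product; hence, for each $m$, the induced map $\bigwedge^m H^1_{\cris}(A/K)\iso\bigwedge^m H^1_{\dR}(\ACal_\eta/K)$ carries the exterior power slope filtration onto the exterior power Hodge filtration. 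Now I would invoke the rationalization of Proposition \ref{SlpFilWedg} to identify the former, under $\cup_m$, with the slope filtration on $H^m_{\cris}(A/K)$, and Proposition \ref{HdgFilWedg} together with Lemma \ref{HdgFilField} to identify the latter, under $\cup_m$, with the Hodge filtration on $H^m_{\dR}(\ACal_\eta/K)$. So the degree-$m$ comparison isomorphism identifies slope and Hodge filtrations. Specializing to $m=2i+1$, Tate-twisting by $(i)$ (which shifts the filtration by $i$ on both sides and therefore preserves the identification, there being no ambiguity as noted in the Remark above), and taking the direct sum over $0\le i\le g-1$ then yields the identification on the odd Mukai realizations, since those filtrations are by definition the direct sums of the twisted ones.

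Next I would treat the converse. Suppose the comparison isomorphism identifies slope and Hodge filtrations on the odd Mukai realizations. It is by construction the direct sum of the degree-wise comparison isomorphisms $H^{2i+1}_{\cris}(A/K)(i)\iso H^{2i+1}_{\dR}(\ACal_\eta/K)(i)$, and both Mukai filtrations are direct sums of the corresponding summand filtrations; since a direct-sum decomposition is unique, identifying the filtrations on the total space forces the identification on each summand. Reading off the $i=0$ summand, the comparison isomorphism $H^1_{\cris}(A/K)\iso H^1_{\dR}(\ACal_\eta/K)$ carries the rational slope filtration onto the Hodge filtration, so $\ACal$ is the canonical lift by Corollary \ref{rationalGM}.

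The only real work sits in the forward direction: one must check that Berthelot--Ogus intertwines the cup-product structures in every degree, so that the degree-$m$ comparison is genuinely the $m$-th exterior power of the degree-one comparison, and that under $\cup_m$ the exterior-power slope and Hodge filtrations coincide with the intrinsic slope and Hodge filtrations. These are precisely Lemma \ref{FCrysWedg}, the rational form of Proposition \ref{SlpFilWedg}, and Proposition \ref{HdgFilWedg}, so I anticipate no genuine obstacle beyond assembling this compatibility carefully and keeping track of the Tate twists.
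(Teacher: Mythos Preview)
Your proposal is correct and follows essentially the same route as the paper: reduce to the degree-one statement (Corollary \ref{rationalGM}) by restricting to the $H^1$-summand in one direction, and propagate from $H^1$ to all degrees via the exterior-power descriptions of the filtrations (Propositions \ref{HdgFilWedg} and \ref{SlpFilWedg}) in the other. The paper's proof is terser but the logical content is the same; your explicit bookkeeping of Tate twists and direct sums is a reasonable elaboration.
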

\begin{proof}
Suppose that the two filtrations are identified.
    The Berthelot-Ogus comparison isomorphism respects the direct sum decomposition by construction, so restricting to $H^1$ lets us apply Corollary \ref{rationalGM}, and we see $\ACal$ is the canonical lift.

    Conversely, if $\ACal$ is the canonical lift, Corollary \ref{rationalGM} shows the filtrations on $H^1$ are identified. Proposition \ref{HdgFilWedg} and Proposition \ref{SlpFilWedg} extend this to the whole Mukai structures.
\end{proof}
\subsection{Proofs of the main results for abelian varieties}
\begin{Prop}\label{ord D inv}
Let $A$ and $B$ be abelian varieties over an algebraically closed field of positive characteristic. In any of the following situations, if $A$ is ordinary (resp.\ supersingular), so is $B$.
\begin{enumerate}
    \item $B$ is isogenous to $A$.
    \item $B$ is derived equivalent to $A$.
    \item $B$ is a sub-abelian variety of $A$.
\end{enumerate}
\end{Prop}
\begin{proof}
We recall that the Hodge polygon of an abelian variety is determined by its dimension. Since the Newton polygon is isogeny-invariant, the case of an isogeny follows from \cite[III Th\'eor\`eme 3.2]{Ch}.

If $B$ is derived equivalent to $A$, there exists an isomorphism of abelian varieties $A\times \widehat{A}\iso B\times \widehat{B}$ (see Proposition \ref{RouPopaSch}). 
In particular, we have 
\[H^1_{\cris}(A/W)\oplus H^1_{\cris}(\widehat{A}/W)\iso H^1_{\cris}(B/W)\oplus H^1_{\cris}(\widehat{B}/W)\] 
as $F$-crystals.
Since an abelian variety is isogenous to its dual abelian variety, they have the same Newton polygons. Thus, the isomorphism of $F$-crystals above shows that $A$ and $B$ have the same Newton polygons. This treats the second case.

Finally, if $B$ is a sub-abelian variety of $A,$ then
 $B\times (A/B) $ and $A$ are isogenous.
Since the Newton polygon of $A$ is the same as the Newton polygon of $B\times (A/B)$, $B$ must be ordinary (resp.\ supersingular) in order for $A$ to be ordinary (resp.\ supersingular). 
\end{proof}
The key ingredient for Theorem \ref{MainLift} in the abelian variety case is the classification of point objects on abelian varieties. This classification is a variant of the following result in characteristic $0$:
\begin{Prop}[{\cite[Theorem 1.1]{DO}} over an algebraically closed field]\label{DO-point-obj-classification}
Let $L$ be an algebraically closed field of characteristic 0, and let $X$ be an abelian variety over $L$ of dimension $d$. Let $F\in\Dbcoh(X)$ be an object. Then, the following conditions are equivalent:
\begin{enumerate}
    \item 
    $
\Ext^{<0}(F,F)=0,\quad \Ext^0(F,F)=L,\quad\dim \Ext^1(F,F)\leq d.
$
\item There is a sub-abelian variety $i:Y\inj X$, a rational point $x\in X(L)$, a simple semi-homogeneous vector bundle $V$ on $Y$, and an integer $r$ such that $F\iso (t_x)_*i_*V[r]$, where $t_x:X\longrightarrow X$ is the translation map by $x$. 
\end{enumerate}
\end{Prop}
In our variant of this result, the condition $\dim \Ext^1(F,F)\leq d$ will be replaced by a condition on the dimension of a certain group scheme, which we will construct now.
Let $L$ be an algebraically closed field of any characteristic, and let $X$ be an abelian variety over $L$ of dimension $d$. 
Let $s\Dcal$ be the moduli stack of simple relatively perfect universally gluable complexes
for the morphism $X\longrightarrow \Spec L$. This is an algebraic stack quasi-separated and locally of finite type over $L$ (\cite{Lie}, \cite[\href{https://stacks.math.columbia.edu/tag/0DPX}{Tag 0DPX}]{stacks-project}),
and it's a $\multGpSch$-gerbe over the coarse space $sD$ which is an algebraic space quasi-separated and locally of finite type over $L$ (\cite[Corollaire 10.8]{CA}). 
Given a simple gluable complex $F\in\Dbcoh(X)$, it defines an $L$-point $[F]:\Spec L\longrightarrow s\Dcal$, or an $L$-point $[F]:\Spec L\longrightarrow sD$.
Similarly to \cite[5.8]{DO}, we consider the action map of $X\times_L\widehat{X}$ on $sD$ denoted as $\mu_F:X\times_L\widehat{X}\longrightarrow sD$ which acts on the $L$-points as $(x,L)\mapsto
(t_x^*F)\tens L$. We define the stabilizer $S_F$ by the cartesian square:
\[
\xymatrix{
S_F\ar[r]\ar[d]&\Spec L\ar[d]^{[F]}\\
X\times_L\widehat{X}\ar[r]^-{\mu_F}& sD
}
\]
By the following lemma, $S_F$ is a closed subgroup variety of $X\times_L\widehat{X}$:
\begin{Lemma}
Let $L$ be an algebraically closed field. 
    Let $M$ be an algebraic space quasi-separated and locally of finite type over $L$. Then, any section $s:\Spec L\longrightarrow M$ defines a closed immersion.
\end{Lemma}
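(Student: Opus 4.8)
The plan is to verify that $s$ is a closed immersion after an \'etale base change on $M$, reducing the problem to an elementary statement about $L$-points of finite-type $L$-schemes. Two general features of $s$ will be used. First, $s$ is a monomorphism, being a section of the structure morphism $M\to\Spec L$; hence every base change of $s$ is a monomorphism, and in particular injective on underlying points. Second, since $\Spec L$ is quasi-compact and $M$ is quasi-separated over $L$, the morphism $s$ is quasi-compact: quasi-separatedness makes the diagonal of $M/\Spec L$ quasi-compact, and this forces any morphism from a quasi-compact space into $M$ to be quasi-compact. Since being a monomorphism, being quasi-compact, and being a closed immersion are all stable under base change, and the last is fpqc-local on the target, it suffices to produce an \'etale cover of $M$ over which $s$ becomes a closed immersion.

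To do this I would choose a scheme $U$ with an \'etale surjection $U\to M$ and cover $U$ by affine opens $U_\alpha$, so that $\{U_\alpha\to M\}$ is an \'etale cover and each $U_\alpha$ is of finite type over $L$. Set $V_\alpha\defeq\Spec L\times_M U_\alpha$; this is \'etale over $\Spec L$, hence a scheme, and it is quasi-compact, being quasi-compact over the affine scheme $U_\alpha$ as a base change of the quasi-compact morphism $s$. A quasi-compact scheme \'etale over the algebraically closed field $L$ is a \emph{finite} disjoint union $\coprod_{i=1}^{n_\alpha}\Spec L$. As $V_\alpha\to U_\alpha$ is a monomorphism, it is injective on points, so it picks out $n_\alpha$ pairwise distinct points $u_1,\dots,u_{n_\alpha}\in U_\alpha$, each an $L$-point and therefore, by the Nullstellensatz, a closed point with residue field $L$. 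Hence $\{u_1,\dots,u_{n_\alpha}\}$ is a closed subset of $U_\alpha$, and $V_\alpha\to U_\alpha$ is precisely the closed immersion of the corresponding reduced finite subscheme. By fpqc descent of closed immersions along $\{U_\alpha\to M\}$, it follows that $s$ is a closed immersion.

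The one genuinely delicate point is the claim that $s$ is quasi-compact: this is exactly what guarantees that each $V_\alpha$ is a finite, rather than infinite, disjoint union of copies of $\Spec L$, and it is indispensable --- for a section of a non-quasi-separated algebraic space, such as the quotient $\mathbb{A}^1_L/\integ$ by a translation action, the fibers $V_\alpha$ can be infinite discrete sets of points, which are not Zariski-closed, and the conclusion fails. The remaining ingredients --- representability and \'etaleness of an atlas after base change, the classification of schemes \'etale over a field, the Nullstellensatz, and fpqc descent of the property of being a closed immersion --- are standard.
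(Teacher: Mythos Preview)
Your proof is correct and follows essentially the same approach as the paper: both reduce via \'etale descent to affine finite-type charts $U_\alpha$, use quasi-compactness of $s$ (from quasi-separatedness of $M$) to conclude that $V_\alpha$ is a finite disjoint union of copies of $\Spec L$, and then observe that such a map into a finite-type $L$-scheme is a closed immersion. Your write-up is somewhat more detailed than the paper's---you make explicit the monomorphism property, the reason $s$ is quasi-compact, and the role of the Nullstellensatz, and you include a helpful counterexample showing the necessity of quasi-separatedness---but the underlying argument is the same.
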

\begin{proof}
We choose an \'etale cover $U\longrightarrow M$ by a scheme and prove that the base-changed map $s_U:U\times_M\Spec L\longrightarrow U$ is a closed immersion.
By further covering $U$ by Zariski opens, we assume $U$ is affine and of finite type over $L$. The morphism $s$ is quasi-compact, 
so $U\times_M\Spec L$ is a quasi-compact \'etale scheme over $L$. Then, $U\times_M\Spec L$ is a disjoint union of finitely many copies of $\Spec L$. Since $U$ is a finite type $L$-scheme, the map $U\times_M\Spec L\longrightarrow U$ must be a closed immersion.
\end{proof}
With this preparation, we can now state our variant of Proposition \ref{DO-point-obj-classification} in all characteristics:
\begin{Prop}[Variant of \cite{DO}]\label{DOmainResultVariant}
Let $L$ be an algebraically closed field, and let $X$ be an abelian variety of dimension $d$ over $L$.
Let $F\in\Dbcoh(X)$ be an object satisfying
\[
\Ext^{<0}(F,F)=0,\quad \Ext^0(F,F)=L,\quad\dim S_F\geq d.
\]
Then there is a sub-abelian variety $i:Y\inj X$, a rational point $x\in X(L)$, a simple semi-homogeneous vector bundle $V$ on $Y$, and an integer $r$ such that $F\iso (t_x)_*i_*V[r]$.
\end{Prop}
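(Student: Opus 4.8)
The plan is to reproduce the argument of \cite{DO} essentially verbatim. The observation that makes this possible is that, once the stabilizer $S_F$ is known to be large, the only step of \cite{DO} that genuinely uses $\character L=0$ is \cite[6.1]{DO}, and that step is exactly the implication ``$\dim\Ext^1(F,F)\le d\ \Rightarrow\ \dim S_F\ge d$'' --- which is precisely what we have built directly into our hypotheses. In positive characteristic that implication can fail, since $S_F$ need not be reduced, so $\dim S_F$ can be strictly smaller than $\dim T_e S_F$, even though the latter is always $\ge d$ as soon as $\dim\Ext^1(F,F)\le d$, because the derivative $d\mu_F$ at the identity maps $\mathrm{Lie}(X\times_L\widehat X)$, of dimension $2d$, into $\Ext^1(F,F)$.

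First I would pass from $S_F$ to the abelian subvariety $K\defeq\bigl((S_F)^0\bigr)_{\red}\subseteq X\times_L\widehat X$: since $L$ is algebraically closed (hence perfect), the reduced identity component of the finite-type group scheme $S_F$ is a smooth connected closed subgroup scheme, and being moreover proper (closed in the abelian variety $X\times_L\widehat X$) it is an abelian subvariety; by hypothesis $\dim K=\dim S_F\ge d$. By construction the $L$-points of $K$ are the pairs $(a,\mathcal L)$ with $t_a^*F\iso F\tens\mathcal L$ (signs as in \cite{DO}), and so $\mathrm{Supp}(F)$ is stable under translation by the abelian subvariety $\operatorname{pr}_X(K)\subseteq X$.

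Next, following \cite{DO}, I would reduce to the case where $\mathrm{Supp}(F)$ is a translate of an abelian subvariety and, after translating, equals an abelian subvariety $i\colon Y\inj X$; here simplicity ($\Ext^0(F,F)=L$) rules out a support consisting of several translated components, while $\Ext^{<0}(F,F)=0$ propagates the analysis through the reduction. Writing $F_0$ for the translate of $F$ supported on $Y$, the largeness of its stabilizer forces $F_0\iso i_*(V[r])$ for a coherent sheaf $V$ on $Y$ which is locally free and semi-homogeneous in the sense of Mukai, and simplicity of $F$ then makes $V$ a \emph{simple} semi-homogeneous vector bundle. Reinstating the translation $t_x$ recorded in the reduction yields $F\iso (t_x)_*i_*V[r]$, as claimed.

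The steps of \cite{DO} that require attention in positive characteristic are \cite[3.9]{DO} and \cite[6.2]{DO}; as \cite[1.3]{DO} already indicates, these go through once one uses Mukai's (characteristic-free) theory of semi-homogeneous bundles, base-change and semicontinuity statements valid over an arbitrary field, and the algebraicity of $s\Dcal$ and $sD$ recalled above. The conceptual gap, \cite[6.1]{DO}, is sidestepped by the hypothesis $\dim S_F\ge d$; accordingly I expect the main obstacle to be not any single point but the bookkeeping of checking that each auxiliary lemma of \cite{DO} can be restated without reference to the characteristic, and that the stabilizer $S_F$ --- whose existence and algebraicity were arranged in the discussion preceding the statement --- can be substituted for the various orbit-dimension computations of \cite{DO}.
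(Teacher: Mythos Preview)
Your proposal is correct and matches the paper's approach: the paper gives no proof beyond the sentence ``essentially the same argument as \cite{DO} shows,'' together with the preceding remark that the characteristic-$0$ step \cite[6.1]{DO} is exactly the one bypassed by imposing $\dim S_F\ge d$ directly. Your write-up is a faithful elaboration of that strategy, including the passage to the reduced identity component of $S_F$ and the observation that \cite[3.9, 6.2]{DO} go through in arbitrary characteristic as indicated in \cite[1.3]{DO}.
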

\begin{proof}
This proposition can be proved by a minor modification of the proof of one implication of Proposition \ref{DO-point-obj-classification} in \cite{DO}. As mentioned in \cite[1.3]{DO}, the characteristic 0 assumption in this proof can be dropped except for the section \cite[6.1]{DO}. In this step, de Jong and Olsson use the condition $\Ext^1(F,F)\leq d$ as well as the characteristic assumption to deduce that $\dim S_F\geq d$, and this is the only place where the assumption $\Ext^1(F,F)\leq d$ is used, so the rest of the proof goes through if we instead assume $\dim S_F\geq d$.
\end{proof}
\begin{Rem}
Let us briefly compare the conditions  
$\dim S_F\geq d$ and $\Ext^1(F,F)\leq d$ for an object $F\in \Dbcoh(X)$ such that $\Ext^{<0}(F,F)=0$ and $ \Ext^0(F,F)=L$. First, we notice that the former condition is stronger than the latter by the combination of Proposition \ref{DOmainResultVariant} and the  implication $(2)\implies(1)$ of Proposition \ref{DO-point-obj-classification} (which holds in all characteristics).  
As discussed in \cite[6.1]{DO}, the tangent space of $S_F$ at the origin is the kernel of a certain map $\kappa_F:HH^1(X)\to \Ext^1(F,F)$. Since the dimension of $HH^1(X)$ is $2d$, the condition $\Ext^1(F,F)\leq d$ implies that the dimension of the tangent space is at least $ d.$ In characteristic 0, the group variety $S_F$ is reduced and hence smooth, so this would imply $\dim S_F\geq d$, making the two assumptions equivalent.
\end{Rem}

We return to an abelian variety over $k$ and apply the above proposition:

\begin{Lemma}\label{liftPointLike}
   Let $A$ be an ordinary abelian variety over $k$ of dimension $d$. Any object $F\in\Dbcoh(A)$ satisfying \[
\Ext^{<0}(F,F)=0,\quad \Ext^0(F,F)=L,\quad\dim S_F\geq d.
\] is a coherent sheaf on $A$ (up to a shift) and lifts to a $W$-flat coherent sheaf $\widetilde{F}$ on the canonical lift $A_{\can}$ (up to a shift).
\end{Lemma}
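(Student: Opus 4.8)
The plan is to apply Proposition~\ref{DOmainResultVariant} to $F$ and then to lift the resulting pieces one at a time. Since $A$ is ordinary, $F$ satisfies exactly the hypotheses of that proposition, so there are a sub-abelian variety $i\colon Y\inj A$, a point $x\in A(k)$, a simple semi-homogeneous vector bundle $V$ on $Y$ and an integer $r$ with $F\iso (t_x)_*i_*V[r]$. In particular $F[-r]\iso(t_x)_*i_*V$ is a coherent sheaf, which gives the first assertion; it remains to produce a $W$-flat coherent sheaf $\widetilde F$ on $A_{\can}$ with $\widetilde F\tens_W k\iso F[-r]$.

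First I reduce the lifting problem along the two exact functors $(t_x)_*$ and $i_*$. The point $x$ lifts to some $\tilde x\in A_{\can}(W)$ because $A_{\can}$ is smooth over $W$ and $W$ is henselian; let $t_{\tilde x}\colon A_{\can}\iso A_{\can}$ be the corresponding translation. The abelian subvariety $Y$ is ordinary by Lemma~\ref{ordAV}, and by the Serre--Tate functoriality of the canonical lift (\cite{DelCris}) the homomorphism $i$ lifts to a homomorphism $i_{\can}\colon Y_{\can}\to A_{\can}$ of abelian schemes over $W$; a standard rigidity argument (the kernel of $i_{\can}$ is finite flat over $W$ with trivial special fiber, hence trivial, so $i_{\can}$ is a finite monomorphism) shows that $i_{\can}$ is a closed immersion lifting $i$. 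Both $t_{\tilde x}$ and $(i_{\can})_*$ are exact, preserve $W$-flatness and commute with the reduction to the special fiber, so it suffices to lift $V$ to a vector bundle $\widetilde V$ on $Y_{\can}$.

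To lift $V$, I invoke the classical structure theorem of Mukai for simple semi-homogeneous vector bundles: $V\iso \pi_*L$ for some isogeny $\pi\colon Z\to Y$ and some line bundle $L$ on $Z$. Then $Z$ is again ordinary (Lemma~\ref{ordAV}), $\pi$ lifts by Serre--Tate to an isogeny $\pi_{\can}\colon Z_{\can}\to Y_{\can}$, and $L$ lifts to a line bundle $\widetilde L$ on $Z_{\can}$: the $\Pic^0$-part of $L$ lifts because $\widehat{Z_{\can}}$ is the canonical lift of $\widehat{Z}$ and $\widehat{Z_{\can}}(W)\surj \widehat{Z}(k)=\Pic^0(Z)$, while the N\'eron--Severi class of $L$ corresponds to the symmetric homomorphism $\phi_L\colon Z\to\widehat{Z}$, which lifts (again by Serre--Tate) to a symmetric homomorphism $Z_{\can}\to\widehat{Z_{\can}}$ of the form $\phi_{\widetilde L}$; correcting $\widetilde L$ by a lift of the resulting $\Pic^0$-discrepancy produces a line bundle on $Z_{\can}$ restricting to $L$. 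Now set $\widetilde V\defeq(\pi_{\can})_*\widetilde L$; since $\pi_{\can}$ is finite locally free and $\widetilde L$ is a line bundle, $\widetilde V$ is locally free on $Y_{\can}$, hence $W$-flat, and $\widetilde V\tens_W k\iso\pi_*L\iso V$ because $R\pi_*=\pi_*$ commutes with base change for the finite flat map $\pi$. Then $\widetilde F\defeq (t_{\tilde x})_*(i_{\can})_*\widetilde V$ is $W$-flat and coherent on $A_{\can}$ with $\widetilde F\tens_W k\iso F[-r]$, as required.

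I expect the main obstacle to be the reduction to line bundles together with the input that every line bundle on an ordinary abelian variety lifts to its canonical lift (equivalently, $\mathrm{NS}(Z_{\can})\to\mathrm{NS}(Z)$ is surjective); this is where the special geometry of the canonical lift enters, through Serre--Tate functoriality for homomorphisms, and some care is needed, e.g.\ for $p=2$, in translating between symmetric homomorphisms and line bundles. If one wished to bypass Mukai's theorem, one could instead try to lift $V$ directly by deformation theory, but then one would have to verify the vanishing of the obstruction in $H^2(Y,\ShHom(V,V))$ against the Kodaira--Spencer class of the deformation $Y_{\can}$, which is less transparent.
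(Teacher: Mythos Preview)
Your argument is correct and follows the same strategy as the paper: apply Proposition~\ref{DOmainResultVariant}, invoke Mukai's structure theorem $V\iso\pi_*L$, and lift everything via Serre--Tate functoriality of canonical lifts. The only organizational difference is that the paper composes first and lifts the single finite morphism $t_a\circ\iota\circ\pi$ in one stroke, then concludes by a tor-independent base change argument; this sidesteps your verification that $i_{\can}$ is a closed immersion (your sketch for this is fine, though ``finite flat'' for the kernel is quickest seen by choosing a retraction up to isogeny so that $\ker(i_{\can})\subset Y_{\can}[n]$). For the line bundle, the paper simply cites \cite[Appendix]{MS} rather than your explicit $\phi_L$/$\Pic^0$ decomposition, which also avoids the $p=2$ subtlety you flag.
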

\begin{proof}
By Proposition \ref{DOmainResultVariant}, we know that there are a sub-abelian variety $\iota:B\inj A$, a rational point $a\in A(k)$, an integer $m$ and a simple semi-homogeneous vector bundle $G$ on $B$ such that $F=(t_a)_*\iota_*G[m]$. 
    By \cite[Theorem 5.8]{Mukai}, there is some isogeny $\pi:C\surj B$ and a line bundle $L$ on $C$ such that $G\iso \pi_*L.$
By Lemma \ref{ord D inv}, $B$ and $C$ are ordinary. Hence, $A,B,C$ have canonical lifts $A_{\can},B_{\can},C_{\can}$, and the morphisms between them also lift canonically (\cite[Appendix]{MS}). Additionally, $L$ lifts to a line bundle $L_{\can}$ on $C_{\can}$, which is even unique if we require $L_{\can}^{\tens p}\iso \Frob^*L_{\can}$, where $\Frob:C_{\can}\longrightarrow C_{\can}$ is the Frobenius lift compatible with the Witt vector Frobenius on $W$ via the unit section $e_{C_{\can}}:\Spec W\longrightarrow C_{\can}$ (\cite[Appendix]{MS}). Observe that the canonical lift of $ t_a\circ\iota\circ \pi$ is finite by upper-semicontinuity of fiber dimensions.
We have the two Cartesian squares:
\[
\xymatrix{
C\ar[rr]^-{t_a\circ\iota\circ \pi}\ar[d]^\alpha&&A\ar[d]^\beta\ar[r]&\Spec k\ar[d]\\
C_{\can}\ar[rr]^-{(t_a\circ\iota\circ \pi)_{\can}}&&A_{\can}\ar[r]&\Spec W
}
\]
The outer square and the right square are tor-independent since the canonical lifts are flat over $W$. By \cite[Lemma 3.62]{Jiang}, it follows that the left square is tor-independent. We claim that $\widetilde{F}\defeq((t_a\circ\iota\circ \pi)_{\can})_*L_{\can}[m]$ lifts $F$. Indeed, by the tor-independent base change on the left square,
we have 
\begin{align*}
L\beta^*((t_a\circ\iota\circ \pi)_{\can})_*L[m]
&=L\beta^*R((t_a\circ\iota\circ \pi)_{\can})_*L[m]\\
&\iso R(t_a\circ\iota\circ \pi)_*L\alpha^*L_{\can}[m]\\
&=(t_a\circ\iota\circ \pi)_*L[m]\\
&=F.
\end{align*}
\end{proof}
In particular, this applies to kernels of Fourier-Mukai equivalences:
\begin{Prop}[Lifting of kernels]\label{CanLiftAVKer}
Let $A$ and $B$ ordinary abelian varieties over $k$. Let $E\in\DperfAbs(A\times_kB)$ be a kernel defining a Fourier-Mukai equivalence. Then, there exists a kernel $\ECal\in\DperfAbs(A_{\can}\times_W B_{\can})$ which lifts $E$ and accordingly defines a $W$-relative Fourier-Mukai equivalence between $A_{\can}$ and $B_{\can}$.
\end{Prop}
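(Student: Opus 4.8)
We regard $E$ as an object of $\Dbcoh(Z)$, where $Z\defeq A\times_k B$; this is legitimate since a perfect complex on the noetherian scheme $Z$ is bounded and coherent. The plan is to verify that $E$ is a ``point object'' in the sense of Lemma \ref{liftPointLike}, apply that lemma to obtain a lift, and then invoke Corollary \ref{LiftEquiv}. Before doing so, note two preliminaries. First, $Z$ is again an ordinary abelian variety: $H^1_{\cris}(Z/W)\iso H^1_{\cris}(A/W)\oplus H^1_{\cris}(B/W)$ as $F$-crystals, so the Newton (and Hodge) polygon of $Z$ is the concatenation of those of $A$ and $B$, and ordinarity is inherited. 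Second, the canonical lift of a product is the product of the canonical lifts, $Z_{\can}\iso A_{\can}\times_W B_{\can}$: this follows from Corollary \ref{rationalGM}, since $H^1_{\dR}((A_{\can}\times_W B_{\can})_\eta/K)\iso H^1_{\dR}(A_{\can,\eta}/K)\oplus H^1_{\dR}(B_{\can,\eta}/K)$ carries the direct-sum Hodge filtration, which matches the direct-sum rational slope filtration on $H^1_{\cris}(Z/K)$ precisely because it does so on each summand.

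Next I would verify the hypotheses of Lemma \ref{liftPointLike} for $E$ on the ordinary abelian variety $Z$, of dimension $\dim Z=\dim A+\dim B=2\dim A$ (recall dimension is a derived invariant). Since $E$ is the kernel of a derived equivalence $\Phi_E\colon\Dbcoh(A)\longrightarrow\Dbcoh(B)$, there is an isomorphism of graded $k$-vector spaces $\Ext^\bullet_Z(E,E)\iso HH^\bullet(A)$ with the Hochschild cohomology of $A$, a standard consequence of $E_R\circ E\iso\OCal_{\Delta_A}$. As $A$ is connected, $HH^{<0}(A)=0$ and $\dim HH^0(A)=\dim H^0(A,\OCal_A)=1$, whence $\Ext^{<0}_Z(E,E)=0$ and $\Ext^0_Z(E,E)=k$; and since $T_A$ is trivial of rank $\dim A$, the space $HH^1(A)\iso H^0(A,T_A)\oplus H^1(A,\OCal_A)$ has dimension $2\dim A=\dim Z$, so $\dim\Ext^1_Z(E,E)=\dim Z$. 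It remains to bound $\dim S_E$ from below. By definition, $S_E\subset Z\times_k\widehat Z$ is the stabilizer of the point $[E]$ under the action $\mu_E$ of $Z\times_k\widehat Z$ on the coarse moduli space $sD$ of simple gluable complexes on $Z$; its orbit therefore has dimension $\dim(Z\times_k\widehat Z)-\dim S_E=2\dim Z-\dim S_E$ and embeds locally into $sD$, whose tangent space at $[E]$ is $\Ext^1_Z(E,E)$ (the $\multGpSch$-gerbe $s\Dcal\to sD$ does not alter the tangent space of the coarse space). Hence $2\dim Z-\dim S_E\leq\dim\Ext^1_Z(E,E)=\dim Z$, i.e.\ $\dim S_E\geq\dim Z$, which is the last hypothesis of Lemma \ref{liftPointLike}.

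Lemma \ref{liftPointLike} then yields a lift $\ECal$ of $E$ which, up to shift, is a $W$-flat coherent sheaf on $Z_{\can}\iso A_{\can}\times_W B_{\can}$. The scheme $A_{\can}\times_W B_{\can}$ is smooth over the regular ring $W$, hence regular and noetherian, so every coherent sheaf on it is a perfect complex; thus $\ECal\in\DperfAbs(A_{\can}\times_W B_{\can})$, and $\ECal_s\iso E$ by construction. Finally, $\Spec W$ is the spectrum of a noetherian local ring with unique closed point $s$, and $E$ defines a Fourier--Mukai equivalence by hypothesis, so Corollary \ref{LiftEquiv} shows that $\ECal$ defines a $W$-relative Fourier--Mukai equivalence between $A_{\can}$ and $B_{\can}$.

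The step I expect to be the main obstacle is the lower bound $\dim S_E\geq\dim Z$: it combines the Hochschild-cohomology identification $\Ext^\bullet_Z(E,E)\iso HH^\bullet(A)$ for kernels of equivalences with a dimension count for the orbit of $[E]$ inside the (possibly singular) moduli space $sD$, using also that passing to the coarse space of the $\multGpSch$-gerbe $s\Dcal$ leaves tangent spaces unchanged. An alternative that bypasses this count is to invoke the structure theory of derived equivalences of abelian varieties (Orlov's theorem and its positive-characteristic refinements): $\Phi_E$ conjugates the $2\dim A$-dimensional group $A\times_k\widehat A$ of translation-and-$\Pic^0$-twist autoequivalences of $\Dbcoh(A)$ onto the analogous group for $B$, and reading this off at the level of kernels exhibits a $2\dim A$-dimensional subgroup of $Z\times_k\widehat Z$ fixing $E$, hence contained in $S_E$. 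The identification $Z_{\can}\iso A_{\can}\times_W B_{\can}$ is a comparatively minor point, settled above.
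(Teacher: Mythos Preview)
Your overall strategy coincides with the paper's: verify the three hypotheses of Lemma~\ref{liftPointLike} for $E$ on $Z=A\times_kB$, then apply Corollary~\ref{LiftEquiv}. The first two hypotheses are checked the same way, via $\Ext^\bullet_Z(E,E)\iso HH^\bullet(A)$. The preliminary identification $Z_{\can}\iso A_{\can}\times_W B_{\can}$ is fine and is tacitly used in the paper as well.

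The genuine difference is in how you establish $\dim S_E\geq\dim Z$. The paper does \emph{not} use your orbit--stabilizer/tangent-space count; instead it invokes Orlov's structure theorem directly: by \cite[Corollary~2.13]{Orlov}, a $k$-point $(a,b,\alpha,\beta)\in A\times B\times\widehat A\times\widehat B$ lies in the stabilizer of $E$ (after reflecting the $\widehat B$-coordinate) if and only if $f_E(a,\alpha)=(b,\beta)$, so $(S_E)_{\red}$ is identified with the graph of the isomorphism $f_E\colon A\times\widehat A\iso B\times\widehat B$ and hence has dimension $2g=\dim Z$. This is precisely the ``alternative'' you sketch in your final paragraph. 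The paper's route is cleaner because it avoids any discussion of the tangent space of $sD$, the behavior of orbits in algebraic spaces, and the computation of $HH^1(A)$.

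Your main argument is essentially correct but has one soft spot. You write $HH^1(A)\iso H^0(A,T_A)\oplus H^1(A,\OCal_A)$ without a characteristic hypothesis, yet the HKR isomorphism in the form you invoke (and as cited in the paper, cf.\ Remark~\ref{NonUniqueLift}) requires $p>\dim A$. For abelian varieties this is not fatal: translating the diagonal to $\{0\}\times A$ via $(a,b)\mapsto(a-b,b)$ and applying K\"unneth gives
\[
HH^n(A)\;\iso\;\bigoplus_{p+q=n}\Ext^p_A(k_0,k_0)\otimes H^q(A,\OCal_A)
\;\iso\;\bigoplus_{p+q=n}\textstyle\bigwedge^p T_{A,0}\otimes H^q(A,\OCal_A),
\]
hence $\dim HH^1(A)=2g$ in any characteristic. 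With this fix, your dimension count goes through. The remaining ingredients you use---that orbits of the smooth group $Z\times_k\widehat Z$ in the algebraic space $sD$ are locally closed of the expected dimension, and that $T_{[E]}sD\iso\Ext^1_Z(E,E)$ is unaffected by passing from the $\multGpSch$-gerbe $s\Dcal$ to its coarse space---are standard, though the paper sidesteps them entirely by the more direct Orlov argument.
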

\begin{proof}
By Corollary \ref{LiftEquiv}, it suffices to check that $E$ satisfies the conditions of Lemma \ref{liftPointLike}.
Since $E$ defines an equivalence of categories $\Dbcoh(A)\iso \Dbcoh(B)$, for each $i$, we have
\[
\Ext_{A\times_kB}^i(E,E)
\iso \Ext_{A\times_kA}^i(\OCal_{\Delta_A},\OCal_{\Delta_A})
=HH^i(A).
\]
This vanishes for $i<0$ and has dimension 1 for $i=0$ (Note that these do not require the HKR isomorphisms to hold), so the first two conditions are satisfied.
For the last condition, we are looking at the stabilizer of $E$ in $(A\times_k B)\times_k(\widehat{A}\times_k\widehat{B})$.
\cite[Corollary 2.13]{Orlov} shows that a rational point
$(a,b,\alpha,\beta)\in(A\times_k B\times_k \widehat{A}\times_k \widehat{B})(k)$ satisfies 
$f_E(a,\alpha)=(b,\beta)$ if and only if $(a,b,\alpha,-\beta)$ is in the stabilizer of $E$, where $f_E:A\times_k\widehat{A}\iso B\times_k\widehat{B}$ is the isomorphism as in \cite[Theorem 2.10]{Orlov}. In other words, the inversion of $(S_F)_{\red}$ on the $\widehat{B}$-component coincides with the graph $\Gamma_{f_E}$ of the map $f_E$.
This has dimension $2g$, so we are done.
\end{proof}
\begin{Rem}
The proposition implies in particular that if $A$ and $B$ are derived equivalent ordinary abelian varieties over $k$, then $A_{\can,\overline{\eta}},B_{\can,\overline{\eta}}$ are also derived equivalent.

We can prove this weaker statement in the following different fashion. By \cite[Theorem 2.19]{Orlov}, we have an ``isometric'' isomorphism of abelian varieties $A\times \widehat{A}\iso B\times\widehat{B}$ (see \cite{Orlov} for the definition of being ``isometric.''
See also Proposition \ref{RouPopaSch}). Observe that the dual of the canonical lift is the canonical lift of the dual (see Example \ref{ExDual} below).
Then, the isomorphism $A\times \widehat{A}\iso B\times\widehat{B}$ lifts uniquely to an isomorphism of abelian schemes $A_{\can}\times\widehat{A_{\can}}\iso B_{\can}\times \widehat{B_{\can}}$ (which can be proved either by Serre-Tate theory or by deformation theory of schemes with Frobenius lifts as in \cite[Appendix]{MS}). This isomorphism is automatically ``isometric'' because group homomorphisms lift uniquely to the canonical lifts.
On the geometric generic fibre, we obtain an isometric isomorphism 
$A_{\can,\overline{\eta}}\times\widehat{A_{\can,\overline{\eta}}}\iso B_{\can,\overline{\eta}}\times \widehat{B_{\can,\overline{\eta}}}$. By \cite[Theorem 4.13]{Orlov} or \cite[\S15]{Polish}, we see $\Dbcoh(A_{\can,\overline{\eta}})\iso \Dbcoh(B_{\can,\overline{\eta}})$. 
\end{Rem}
\begin{Rem}\label{NonUniqueLift}
In contrast to the K3 surface case below (Proposition \ref{K3LiftKer}), we do not expect that we can uniquely lift kernels. For demonstration, let $A,B,E$ be as in Proposition \ref{CanLiftAVKer} and assume $p>\dim A$ for a moment. 
Then, as in \cite{LODef}, the set of deformations of $E$ to $(A_{\can}\times_WB_{\can})\times_WW_2(k)$ is a torsor under 
\[\Ext^1_{A\times_k B}(E,E\tens^L pW/p^2W)\iso \Ext^1_{A\times_k B}(E,E) \tens_k pW/p^2W\iso \Ext^1_{A\times_k B}(E,E).\]
Since $E$ defines an equivalence $\Dbcoh(A)\iso\Dbcoh(B)$, we have 
\[\Ext^1_{A\times_k B}(E,E)\iso \Ext^1_{A\times_k A}(\OCal_\Delta,\OCal_\Delta)=HH^1(A).\]
By the characteristic assumption, we have an HKR isomorphism \[HH^1(A)\iso H^1(A,\OCal_A)\oplus H^0(A,T_A)\]
which has dimension $2\dim A$ (\cite[Corollary 0.6]{Amnon}).
\end{Rem}
Next, we discuss an application to the $p$-adic variational Hodge conjecture. The conjecture predicts the following:
\begin{Conj}[$p$-adic variational Hodge conjecture (\cite{Emerton})]
Let $\XCal$ be a smooth proper $W(k)$-scheme, where $k$ is a perfect field of characteristic $p>0$. Suppose that $Z_0$ is a codimension $n$ cycle in $\XCal\times_{W(k)} k$ such that its cycle class \[cl(Z_0)\in H_{\cris}^{2n}(\XCal\times_{W(k)}k/W(k))\tens \Frac(W(k))\iso H_{\dR}^{2n}(\XCal/W(k))\tens\Frac(W(k))\] lies in the $n$-th piece of the Hodge filtration. Then, there exists a cycle $Z\subset \XCal$ which specializes to $Z_0$ (in the rational Chow group).
\end{Conj}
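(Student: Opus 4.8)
The statement is the full $p$-adic variational Hodge conjecture, which is open in general; accordingly I can only describe the natural deformation-theoretic strategy and isolate precisely where it breaks down. The plan is to reduce the global problem of producing a cycle $Z\subset\XCal$ to an infinitesimal lifting problem, using the crystalline cycle class together with the Berthelot--Ogus comparison to turn the Hodge-filtration hypothesis into a statement about vanishing of deformation obstructions, and then to algebraize the resulting formal object.

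First I would filter $W$ by its thickenings $W_m\defeq W/p^mW$ and attempt to build a compatible system of lifts $Z_m$ of $Z_0$ to $\XCal\times_W W_m$. The crystalline class $cl(Z_0)$ lives in $H^{2n}_{\cris}(\XCal\times_W k/W)\tens\Frac(W)$, and under the identification $H^{2n}_{\cris}(\XCal\times_W k/W)\iso H^{2n}_{\dR}(\XCal/W)$ the hypothesis $cl(Z_0)\in F^n$ is exactly the infinitesimal criterion for the cycle class to remain of Hodge type at each thickening --- equivalently, the condition that the obstruction to lifting the class one further step vanishes. If these obstructions genuinely vanish at every stage, a limiting and algebraization argument (in the style of the Grothendieck existence theorem, as used for abelian schemes in Corollary \ref{FMPASch}) would produce a formal, then algebraic, object over $\XCal$ carrying the prescribed class.

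The codimension-one case is a useful model and is in fact known. For $n=1$ a class in $F^1$ is the first crystalline Chern class of a line bundle on $\XCal_s$; the obstruction to deforming a line bundle lives in $H^2(\XCal_s,\OCal)$, which is the bottom Hodge piece $Gr^0 H^2$, and the Berthelot--Ogus comparison together with the hypothesis $c_1\in F^1$ forces this obstruction to vanish. Hence the line bundle lifts to $\XCal$ and its associated divisor specializes to $Z_0$. This is the $p$-adic Lefschetz $(1,1)$ theorem, and it shows that the obstruction-vanishing mechanism works cleanly whenever there is an honest geometric object --- a line bundle --- available to deform.

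The main obstacle is the higher-codimension case, and it is where the strategy genuinely fails. For $n>1$ there is no line bundle to deform, so even granting that the crystalline and Hodge data force all infinitesimal obstructions to vanish, one must still \emph{produce an actual algebraic cycle} (or a coherent sheaf whose Chern character realizes the class) on the total space. Converting cohomological Hodge data into an algebraic cycle is precisely the unsolved core of the conjecture --- the $p$-adic analogue of the principal difficulty in the classical Hodge conjecture --- and no general method is known. This is exactly why the present paper establishes not the conjecture itself but only the special case in which $Z_0$ is a Chern class $c_i(E)$ of a Fourier--Mukai kernel that lifts to the canonical lifts (Corollary \ref{MainLiftCor}, Proposition \ref{padicVarHdgAV}): there the liftability of the kernel, supplied by Theorem \ref{MainLift}, furnishes the algebraic object that the general deformation strategy cannot conjure on its own.
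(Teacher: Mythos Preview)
The statement in question is a \emph{Conjecture}, not a theorem, and the paper does not prove it; it merely records the conjecture as context before establishing the special case Proposition~\ref{padicVarHdgAV}. Your proposal correctly identifies this: you explain the natural deformation-theoretic strategy, observe that it succeeds for $n=1$ (the $p$-adic Lefschetz $(1,1)$ theorem), isolate the genuine obstruction in higher codimension (producing an actual cycle from Hodge data), and accurately point out that the paper sidesteps this by working only with Chern classes of objects it has already lifted. There is therefore nothing to compare against, and your discussion is an appropriate response to being asked to ``prove'' an open conjecture.
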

When $\XCal=A_{\can}$ for an ordinary abelian variety $A$, the condition in the conjecture regarding the Hodge filtration is vacuous. To see this, by the compatibility of Chern classes with cycle classes
(\cite[(16)]{GrothCh}),
it suffices to check that the crystalline Chern classes always land in the middle Hodge filtration under the identification $H_{\cris}^*(A/W)\iso H_{\dR}^*(A_{\can}/W)$. 
By the proof of Proposition \ref{CanLiftViaMukai}, the isomorphism identifies the Hodge filtration with the slope filtration, but the $i$-th crystalline Chern class always lands in the $i$-th filtered piece of the slope filtration (\cite[\S2.2]{LO}), so we have verified the condition. Lemma \ref{liftPointLike} yields the following special case of this conjecture (which in particular applies to kernels of Fourier-Mukai equivalences by the proof of Proposition \ref{CanLiftAVKer}):
\begin{Prop}\label{padicVarHdgAV}
Let $X$
be an ordinary abelian variety of dimension $d$ over $k$. Let $F\in\Dbcoh(X)$ be an object satisfying
\[
\Ext^{<0}(F,F)=0,\quad \Ext^0(F,F)=k,\quad\dim S_F\geq d.
\] We consider its Chern class $c_i(F)\in\CH^{i}(X)_\rat$ for each $i\geq0$. If we view $X$ as the special fiber of the $W(k)$-scheme $X_{\can}$, the $p$-adic variational Hodge conjecture holds for the class $c_i(F)$.
\end{Prop}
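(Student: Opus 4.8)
The plan is to read the statement off from Lemma~\ref{liftPointLike} together with the discussion immediately preceding the proposition. First I would invoke Lemma~\ref{liftPointLike}: up to a shift $[r]$, the object $F$ is a coherent sheaf $F_0$ on $X$, and $F_0$ lifts to a $W$-flat coherent sheaf $\widetilde{F_0}$ on $X_{\can}$. Put $\widetilde F\defeq\widetilde{F_0}[r]\in\DperfAbs(X_{\can})$; since $\widetilde{F_0}$ is $W$-flat, derived restriction along the closed immersion $j:X\inj X_{\can}$ gives $Lj^*\widetilde F\iso F$.

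Next I would produce the required cycle on $X_{\can}$. As Chern classes are compatible with pullback and factor through $K_0$, the identity $j^*[\widetilde F]=[F]$ in $K_0(X)$ yields $j^*c_i(\widetilde F)=c_i(F)$ in $\CH^i(X)_\rat$; that is, the class $c_i(\widetilde F)\in\CH^i(X_{\can})_\rat$ specializes to $c_i(F)$. So $c_i(\widetilde F)$ is exactly the lift of $c_i(F)$ whose existence the $p$-adic variational Hodge conjecture predicts, provided the conjecture's hypothesis applies to $c_i(F)$.

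Finally I would check that this hypothesis --- that the (crystalline, equivalently de Rham) cycle class of $c_i(F)$ lands in the $i$-th piece of the Hodge filtration --- is automatic, which is precisely the content of the paragraph before the statement. Using the compatibility of crystalline cycle classes with crystalline Chern classes (\cite[(16)]{GrothCh}) and of the crystalline with the de Rham Chern classes under the Berthelot--Ogus isomorphism (Appendix~\S\ref{secChern}), it is enough to know that the $i$-th crystalline Chern class of $F_0$ lies in $Fil^i_{\slope}$ of the slope filtration on $H^{2i}_{\cris}(X/K)$ (\cite[\S2.2]{LO}); and the proof of Proposition~\ref{CanLiftViaMukai}, extended from $H^1$ to $H^{2i}$ by the wedge-power identifications of Propositions~\ref{HdgFilWedg} and \ref{SlpFilWedg}, identifies that slope filtration with the Hodge filtration $Fil^i$ on $H^{2i}_{\dR}(X_{\can}/W)\tens K$ under the comparison $H^{2i}_{\cris}(X/K)\iso H^{2i}_{\dR}(X_{\can}/W)\tens K$. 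Hence $c_i(F)$ automatically lies in $Fil^i$, the hypothesis is satisfied, and the previous paragraph finishes the proof.

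The only genuine work is bookkeeping: matching the Chow-theoretic, crystalline and de Rham Chern- and cycle-class formalisms together with their filtrations --- which is exactly why Appendix~\S\ref{secChern} is included. The step carrying the real content, namely the existence of the $W$-flat lift $\widetilde F$, is already packaged into Lemma~\ref{liftPointLike}, so no further deformation-theoretic input is needed; I expect the compatibility bookkeeping, rather than any conceptual point, to be the main (and essentially only) obstacle.
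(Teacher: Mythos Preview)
Your proposal is correct and follows essentially the same approach as the paper: the paper states the proposition immediately after explaining why the Hodge-filtration hypothesis is vacuous for $X_{\can}$ and then simply says that Lemma~\ref{liftPointLike} yields the result, leaving the specialization of Chern classes implicit. You have merely unpacked this (the $W$-flat lift $\widetilde F$, the $K_0$-compatibility giving $j^*c_i(\widetilde F)=c_i(F)$, and the automaticity of the filtration condition) in more detail than the paper does.
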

We move on to the next key observation, which will be used to describe the Fourier-Mukai partners of canonical lifts.
\begin{Prop}\label{CanFMPAV}
    Let $\XCal,\YCal$ be smooth projective $W$-schemes which are $W$-relatively Fourier-Mukai equivalent. If the special fiber of $\XCal$ is an ordinary abelian variety and $\XCal$ is its canonical lift, then the same is true for $\YCal$.
\end{Prop}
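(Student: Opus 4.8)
The plan is to reduce to Proposition \ref{CanLiftViaMukai} by transporting the Mukai structure across the equivalence. Let $\ECal\in\DperfAbs(\XCal\times_W\YCal)$ be the kernel of the given $W$-relative Fourier-Mukai equivalence, and write $A\defeq\XCal_s$, $B\defeq\YCal_s$, so that $A$ is an ordinary abelian variety and $\XCal=A_{\can}$. Restricting $\ECal$ to the special fiber gives a derived equivalence $\Dbcoh(A)\iso\Dbcoh(B)$; by Proposition \ref{FMPAV} the variety $B$ is an abelian variety, and by Proposition \ref{ord D inv} it is again ordinary. By Corollary \ref{FMPASch}, $\YCal$ is an abelian scheme over $W$, hence a lift of $B$, so it makes sense to ask whether $\YCal=B_{\can}$, and by Proposition \ref{CanLiftViaMukai} it suffices to show that the Berthelot-Ogus comparison isomorphism carries the slope filtration on the odd Mukai $F$-isocrystal of $B$ to the Hodge filtration on the odd Mukai de Rham realization of $\YCal_\eta$.

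Next I would record the two cohomological Fourier-Mukai correspondences attached to $\ECal$. Over $k$, the kernel $\ECal_s$ induces, through its crystalline Mukai vector (which equals $\ch_{\cris}(\ECal_s)$, the Todd class of an abelian variety being trivial), a map $\bigoplus_i H^{2i+1}_{\cris}(A/K)(i)\to\bigoplus_i H^{2i+1}_{\cris}(B/K)(i)$ between the odd Mukai $F$-isocrystals; by functoriality (modelled on the composition law $\Phi_{\FCal\circ\ECal}=\Phi_{\FCal}\circ\Phi_{\ECal}$ together with $\Phi_{\OCal_\Delta}=\id$), applied also to the inverse kernel, this is an isomorphism, and since crystalline Chern classes, pullbacks, cup products and Gysin maps all commute with Frobenius once the intervening powers of $p$ are absorbed by the Tate twists, it is an isomorphism of $F$-isocrystals and so respects slope filtrations. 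Over $K$, by Remark \ref{baseChangeFMP} the complex $\ECal_\eta$ is a $K$-relative Fourier-Mukai kernel, and the same recipe with $\ch_{\dR}(\ECal_\eta)$ produces an isomorphism $\bigoplus_i H^{2i+1}_{\dR}(\XCal_\eta/K)(i)\to\bigoplus_i H^{2i+1}_{\dR}(\YCal_\eta/K)(i)$; because de Rham Chern classes lie in the expected filtered pieces and pullback, cup product and proper pushforward are strictly compatible with the Hodge filtrations of abelian schemes (Lemma \ref{BBMLemma}, \cite[\href{https://stacks.math.columbia.edu/tag/0FM7}{Tag 0FM7}]{stacks-project}), this isomorphism, and its inverse, respect Hodge filtrations.

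Finally I would run the diagram chase. The Berthelot-Ogus isomorphisms $H^*_{\cris}(A/K)\iso H^*_{\dR}(\XCal_\eta/K)$ and $H^*_{\cris}(B/K)\iso H^*_{\dR}(\YCal_\eta/K)$ form the vertical sides of a square whose horizontal sides are the two correspondences above; commutativity follows from the compatibility of Berthelot-Ogus with cup product and with pullback and proper pushforward along the (lifted) projections from $\XCal\times_W\YCal$, together with the compatibility of crystalline and de Rham Chern classes for $\ECal$, which is the content of the appendix \S\ref{secChern}. Now transport the slope filtration on the odd Mukai $F$-isocrystal of $B$: pulled back along the top horizontal arrow it becomes the slope filtration on that of $A$ (the top arrow being slope-compatible); the left vertical arrow sends it to the Hodge filtration on the odd Mukai de Rham realization of $\XCal_\eta$, since $\XCal=A_{\can}$ (Proposition \ref{CanLiftViaMukai}); and the bottom horizontal arrow then sends it to the Hodge filtration on the odd Mukai de Rham realization of $\YCal_\eta$. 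By commutativity of the square, this last filtration is the image of the slope filtration on the odd Mukai $F$-isocrystal of $B$ under the right vertical arrow, so the right Berthelot-Ogus isomorphism identifies slope with Hodge; by Proposition \ref{CanLiftViaMukai}, $\YCal=B_{\can}$, which together with the first paragraph proves the claim. The main obstacle I anticipate is precisely the commutativity of this square: reconciling the appendix's Chern-class compatibility with the functoriality of the crystalline--de Rham comparison under the operations (pullback, cup product, proper pushforward) that assemble the Mukai correspondence; everything else is bookkeeping with Tate twists.
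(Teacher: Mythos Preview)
Your proposal is correct and follows essentially the same route as the paper: reduce to Proposition \ref{CanLiftViaMukai} by showing that the crystalline and de Rham cohomological Fourier--Mukai transforms (built from the Chern character of $\ECal$) are compatible with the Berthelot--Ogus comparison via the appendix (Corollary \ref{ChernCompat}), and then chase the slope and Hodge filtrations around the resulting commuting square. The paper's own proof is terser but relies on exactly the same ingredients (Corollary \ref{FMPASch}, Proposition \ref{ord D inv}, the cohomological transforms from \cite[\S2]{LO}, Corollary \ref{ChernCompat}, and Proposition \ref{CanLiftViaMukai}); you have also correctly identified the one nontrivial compatibility as being the content of \S\ref{secChern}.
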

\begin{proof}
By Corollary \ref{FMPASch} and Lemma \ref{ord D inv}, 
$\YCal$ is an abelian scheme whose special fiber is ordinary. 
Let $\ECal$ be a kernel defining a relative Fourier-Mukai equivalence. 
As in \cite[\S2]{LO}, the $W$-relative Fourier-Mukai equivalence (or its restriction to the generic fiber) induces a cohomological Fourier-Mukai transform on the (odd) Mukai de Rham structures:
\[
\Phi^{\dR}_{\ECal_{\eta}}:\bigoplus_{i=0}^{g-1} H_{\dR}^{2i+1}(\XCal_{\eta}/K)(i)\longrightarrow \bigoplus_{i=0}^{g-1} H_{\dR}^{2i+1}(\YCal_{\eta}/K)(i)
\]
Note that this respects the filtered structure since Chern classes land in the middle piece of the Hodge filtration (see \S\ref{secChern}). We also have the inverse transform, so this is an isomorphism.
The restriction of the same relative Fourier-Mukai equivalence to the special fiber similarly defines an isomorphism of the (odd) Mukai $F$-isocrystals:
\[
\Phi^{\cris}_{\ECal_{s}}:\bigoplus_{i=0}^{g-1} H_{\cris}^{2i+1}(\XCal_{s}/K)(i)\longrightarrow \bigoplus_{i=0}^{g-1} H_{\cris}^{2i+1}(\YCal_{s}/K)(i)
\]
Moreover, these cohomological Fourier-Mukai transforms are compatible with the Berthelot-Ogus comparison isomorphisms by Corollary \ref{ChernCompat}:
\[
\xymatrix{
\bigoplus_{i=0}^{g-1} H_{\dR}^{2i+1}(\XCal_{\eta}/K)(i)\ar[r]_\iso^{\Phi^{\dR}_{\ECal_{\eta}}}\ar[d]^\iso &
\bigoplus_{i=0}^{g-1} H_{\dR}^{2i+1}(\YCal_{\eta}/K)(i)\ar[d]^\iso\\
\bigoplus_{i=0}^{g-1} H_{\cris}^{2i+1}(\XCal_{s}/K)(i)\ar[r]^{\Phi^{\cris}_{\ECal_{s}}}_\iso& \bigoplus_{i=0}^{g-1} H_{\cris}^{2i+1}(\YCal_{s}/K)(i)
}
\]
We are done by Proposition \ref{CanLiftViaMukai}.
\end{proof}
\begin{Ex}[Dual abelian variety and canonical lift]\label{ExDual}
    Let $A$ be an ordinary abelian variety over $k$. The Poincar\'e bundle on $A_{\can}\times\widehat{{A_{\can}}}$ defines a $W$-relative Fourier-Mukai equivalence between $A_{\can}$ and $\widehat{A_{\can}}$. It then follows that $\widehat{A_{\can}}$ is the canonical lift of its special fiber: $\widehat{A_{\can}}\iso (\widehat{A})_{\can}$.
\end{Ex}
\begin{Prop}[Fourier-Mukai partners of canonical lifts]\label{FMPofCanLiftAV}
    Let $X$ be an ordinary abelian variety over $k$. Then, the restriction to the special fiber defines a bijection:
    \[
    r:\FMP(X_{\can}/W)\longrightarrow \FMP(X/k)
    \]
    In particular, the set $\FMP(X_{\can}/W)$ is finite.
\end{Prop}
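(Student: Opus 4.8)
The plan is to show that $r$ is a well-defined bijection, and then deduce finiteness of $\FMP(X_{\can}/W)$ from the known finiteness of $\FMP(X/k)$. First I would check that $r$ is even well-defined: if $\YCal$ is a $W$-relative Fourier-Mukai partner of $X_{\can}$, then $\YCal$ is smooth projective over $W$, so $\YCal_s$ is smooth projective over $k$, and by Remark~\ref{baseChangeFMP} the base change of the relative equivalence along $\Spec k\longrightarrow\Spec W$ exhibits $\YCal_s$ as a Fourier-Mukai partner of $(X_{\can})_s=X$; hence $\YCal_s\in\FMP(X/k)$. Since isomorphic $W$-schemes have isomorphic special fibers, this descends to a map on isomorphism classes $r:\FMP(X_{\can}/W)\longrightarrow\FMP(X/k)$.

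For surjectivity, take $Y\in\FMP(X/k)$. By Proposition~\ref{FMPAV} the variety $Y$ is an abelian variety, and by Proposition~\ref{ord D inv} it is ordinary (being derived equivalent to the ordinary $X$), so $Y$ admits a canonical lift $Y_{\can}$ over $W$. Choosing a kernel $E\in\DperfAbs(X\times_kY)$ defining the equivalence $\Dbcoh(X)\iso\Dbcoh(Y)$ and applying Proposition~\ref{CanLiftAVKer} (with $A=X$, $B=Y$) produces a kernel $\ECal\in\DperfAbs(X_{\can}\times_WY_{\can})$ lifting $E$ and defining a $W$-relative Fourier-Mukai equivalence; thus $Y_{\can}\in\FMP(X_{\can}/W)$ and $r(Y_{\can})=(Y_{\can})_s=Y$. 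For injectivity, suppose $\YCal,\YCal'\in\FMP(X_{\can}/W)$ with $\YCal_s\iso\YCal'_s$ over $k$. Since each of $\YCal,\YCal'$ is $W$-relatively Fourier-Mukai equivalent to $X_{\can}$, which is the canonical lift of the ordinary abelian variety $X$, Proposition~\ref{CanFMPAV} gives $\YCal\iso(\YCal_s)_{\can}$ and $\YCal'\iso(\YCal'_s)_{\can}$; by uniqueness (and functoriality in the special fiber) of the canonical lift, $\YCal_s\iso\YCal'_s$ forces $\YCal\iso\YCal'$. Hence $r$ is a bijection.

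Finally, since $r$ is a bijection, $\FMP(X_{\can}/W)$ is finite as soon as $\FMP(X/k)$ is. The finiteness of the set of Fourier-Mukai partners of an abelian variety over an algebraically closed field is the remaining input: it follows from Orlov's classification of derived equivalences between abelian varieties (cf.\ the discussion after Proposition~\ref{CanLiftAVKer}, \cite{Orlov}, \cite{Polish}), which identifies $\FMP(X/k)$ with a set of isometric product decompositions of $X\times\widehat{X}$, together with the finiteness of such decompositions.

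The heavy lifting here is already done by Propositions~\ref{FMPAV}, \ref{CanLiftAVKer} and \ref{CanFMPAV}, so the proof of this proposition is essentially formal bookkeeping; the two points requiring care are (i) verifying that the objects produced along the way (special fibers of relative partners, canonical lifts of absolute partners) are genuinely smooth projective, so that they lie in the respective $\FMP$-sets, and (ii) citing a correct reference for the finiteness of $\FMP(X/k)$ for abelian varieties in positive characteristic. I expect (ii) to be the only genuinely nontrivial ingredient not already at hand.
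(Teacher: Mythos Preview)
Your proof is correct and follows essentially the same route as the paper: well-definedness via base change, surjectivity via Propositions~\ref{FMPAV}, \ref{ord D inv}, \ref{CanLiftAVKer}, injectivity via Proposition~\ref{CanFMPAV} and uniqueness of canonical lifts, and finiteness via Orlov. The paper packages the bijectivity as ``the map $[Y]\mapsto[Y_{\can}]$ is a surjective section of $r$,'' which is just a rephrasing of your surjectivity/injectivity split, and cites \cite[Corollary 2.20]{Orlov} directly for the finiteness of $\FMP(X/k)$.
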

\begin{proof}
    Base change preserves Fourier-Mukai partners (Remark \ref{baseChangeFMP}), so the map is well-defined.

    By Lemma \ref{ord D inv}, Proposition \ref{FMPAV} and Corollary \ref{FMPASch}, the Fourier-Mukai partners under consideration are ordinary abelian varieties over $k$ and abelian schemes over $W$. Moreover by Proposition \ref{CanFMPAV}, such abelian schemes over $W$ arise as the canonical lifts of their special fibers.
    By Proposition \ref{CanLiftAVKer}, we have a section of $r$ given by
    \[s:\FMP(X/k)\longrightarrow \FMP(X_{\can}/W);\quad
    [Y]\mapsto [Y_{\can}],\] but this is surjective by what we just wrote.

    For the finiteness statement, it suffices to know that there are only finitely many Fourier-Mukai partners of $X$ over $k$ up to isomorphism, but this is proved in \cite[Corollary 2.20]{Orlov}. 
\end{proof}
\section{K3 surface case}\label{secK3}
In this section, we show that most results from \S\ref{secAV} can be analogously proved for K3 surfaces. 
Throughout the section, we assume $k$ is an algebraically closed field of characteristic $p>2.$
We first recall that the property of being a K3 surface is derived-invariant: 
\begin{Lemma}[Fourier-Mukai partners of K3 surfaces, {\cite[Proposition 3.9]{LO}}]\label{derInvK3}
    Let $X$ be a K3 surface over $k$. Then, any Fourier-Mukai partner of $X$ over $k$ is again a K3 surface.
\end{Lemma}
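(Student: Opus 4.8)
The plan is to verify directly that a (smooth projective) Fourier--Mukai partner $Y$ of $X$ has all the defining features of a K3 surface: it is a connected smooth projective surface with $\omega_Y\iso\OCal_Y$ and $H^1(Y,\OCal_Y)=0$. Smoothness and projectivity are part of the standing convention on Fourier--Mukai partners, and $\dim Y=\dim X=2$ since dimension is a derived invariant. For connectedness, I would use that $HH^0$ is a derived invariant: as in the identification of the $\Ext$-groups of a kernel with Hochschild cohomology used earlier in the paper, $HH^0(Y)=\Hom_{Y\times_k Y}(\OCal_{\Delta_Y},\OCal_{\Delta_Y})=H^0(Y,\OCal_Y)$, and this is isomorphic to $HH^0(X)=H^0(X,\OCal_X)=k$; since $\dim_k H^0(Y,\OCal_Y)$ equals the number of connected components of $Y$, the scheme $Y$ is connected --- in particular equidimensional of dimension $2$, so that $\omega_Y$ is a line bundle.

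Next I would pin down the canonical bundle using the Serre functor. An equivalence between triangulated categories that both admit Serre functors necessarily intertwines them, and for a smooth projective variety $Z$ the Serre functor of $\Dbcoh(Z)$ is $(-)\tens\omega_Z[\dim Z]$. Since $\omega_X\iso\OCal_X$ and $\dim X=2$, the Serre functor of $\Dbcoh(X)$ is the shift $[2]$; transporting it along the equivalence, the Serre functor of $\Dbcoh(Y)$ is also $[2]$. Hence $(-)\tens\omega_Y[2]\iso[2]$ as endofunctors of $\Dbcoh(Y)$, so $(-)\tens\omega_Y\iso\id$, and evaluating at $\OCal_Y$ yields $\omega_Y\iso\OCal_Y$.

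It then remains to prove $H^1(Y,\OCal_Y)=0$, and here I would use Hochschild cohomology together with the Hochschild--Kostant--Rosenberg decomposition, which is available because the standing hypothesis $p>2$ means $p>\dim X=2$. Since $HH^\bullet$ is a derived invariant, $\dim_k HH^1(Y)=\dim_k HH^1(X)$. Applying HKR to $X$ gives $HH^1(X)\iso H^1(X,\OCal_X)\oplus H^0(X,T_X)$, and for a K3 surface both summands vanish: the first by definition, the second because a K3 surface has no nonzero global vector field (a classical fact valid in every characteristic). Hence $HH^1(X)=0$, so $HH^1(Y)=0$, and applying HKR to $Y$ (again $p>\dim Y=2$) gives $0=HH^1(Y)\iso H^1(Y,\OCal_Y)\oplus H^0(Y,T_Y)$, whence $H^1(Y,\OCal_Y)=0$. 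Combining the three steps, $Y$ is a connected smooth projective surface with trivial canonical bundle and vanishing first cohomology of the structure sheaf, i.e.\ a K3 surface.

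The main obstacle is the positive-characteristic input, and it is settled precisely by the standing assumption of this section: one needs the HKR decomposition, which requires $p>\dim$, i.e.\ $p>2$ for surfaces, and one needs $HH^\bullet$ to genuinely transport along the equivalence, which is the same bookkeeping as in the abelian-variety case (matching the $\Ext$-groups of a Fourier--Mukai kernel with the Hochschild cohomology of both source and target). As an alternative that bypasses HKR, once $\omega_Y\iso\OCal_Y$ has been established one can invoke the classification of surfaces: $\omega_Y\iso\OCal_Y$ rules out $(-1)$-curves and forces $\kappa(Y)=0$, so $Y$ is a minimal surface of Kodaira dimension zero and hence, since $p>2$, either a K3 surface or an abelian surface (the remaining minimal surfaces of Kodaira dimension zero in characteristic $p>2$ all have nontrivial torsion canonical bundle); the abelian case is impossible by Theorem \ref{FMPofAVisAV}, since it would force $X$ itself to be an abelian variety rather than a K3 surface.
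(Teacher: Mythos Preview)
Your argument is correct, but note that the paper does not give its own proof of this lemma: it is stated with a citation to \cite[Proposition 3.9]{LO} and used as a black box. So there is nothing to compare against in the paper itself. Your direct verification---Serre functor to obtain $\omega_Y\iso\OCal_Y$, and the HKR decomposition of $HH^1$ under the standing hypothesis $p>2$ to obtain $H^1(Y,\OCal_Y)=0$---is the standard route, and in fact the very same $HH^1$ computation you invoke is spelled out later in the paper in the proof of Proposition~\ref{K3LiftKer}. Your proof is thus entirely in the spirit of the surrounding text.

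One small remark on your alternative via the classification of surfaces: in characteristic $3$ you must also exclude quasi-hyperelliptic surfaces from the list of minimal surfaces of Kodaira dimension zero. These too have nontrivial torsion canonical bundle, so the conclusion survives, but the HKR route is cleaner and avoids this extra case check.
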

\subsection{Crystalline cohomology}   
We briefly recall the crystalline cohomology of K3 surfaces for readers' convenience. As in \cite[Proposition 1.1]{DelK3}, when $X$ is a K3 surface over $k$, $H^i_{\cris}(X/W)$ is a free $W$-module of rank $1,0,22,0,1$ for $i=0,1,2,3,4$ respectively. 
By \cite[II Corollaire 3.5, II Corollaire 3.15]{Ill}, 
the slopes of $H^0_{\cris}(X/W)$ (resp.\ $H^4_{\cris}(X/W)$) lie in $[0,1)$ (resp.\ $[2,3)$). However, they are free of rank 1, so we have isomorphisms of $F$-crystals 
\[H^0_{\cris}(X/W)\iso W,H^4_{\cris}(X/W)\iso W(-2).\]
We also recall that 
the Hodge numbers of $X$ are the same as those of a complex K3 surface (\cite[Proposition 1.1]{DelK3}), so that its ordinarity can be checked by looking at the Newton polygon (\cite[III Th\'eor\`eme 3.2]{Ch}).
As in \cite{LO}, we will look at the \textbf{(even) Mukai $F$-isocrystal}
of $X$, which is given by 
\[H_{\cris}^0(X/K)\oplus H_{\cris}^2(X/K)(1)\oplus H_{\cris}^4(X/W)(2)\iso H_{\cris}^2(X/K)(1)\oplus K^{\oplus 2},\]
where we are using a slightly different convention about the Tate twist from \cite{LO}. In addition to the $F$-isocrystal structure, it also comes with the Mukai pairing (see \cite{LO}).
As in the abelian variety case, the (even) Mukai $F$-isocrystals are preserved under derived equivalence, so we obtain:
\begin{Lemma}\label{D inv ord K3}
    If $X$ and $Y$ are derived equivalent $K3$ surfaces over $k$, they have the same Newton polygons. In particular, ordinarity and supersingularity of K3 surfaces are derived-invariant.
\end{Lemma}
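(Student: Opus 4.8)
The plan is to mimic the proof of Proposition \ref{ord D inv} from the abelian variety case. A Fourier-Mukai equivalence between the K3 surfaces $X$ and $Y$ induces an isomorphism of (even) Mukai $F$-isocrystals (\cite[\S2]{LO}; cf.\ the discussion preceding the lemma), so
\[
H_{\cris}^2(X/K)(1)\oplus K^{\oplus 2}\iso H_{\cris}^2(Y/K)(1)\oplus K^{\oplus 2}
\]
as $F$-isocrystals over $k$, using that $H^0_{\cris}\iso W$ and $H^4_{\cris}\iso W(-2)$ contribute two copies of the trivial (slope $0$) isocrystal after the relevant Tate twists.

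First I would invoke the Dieudonné--Manin classification: since $k$ is algebraically closed (perfect would already suffice), the category of $F$-isocrystals over $k$ is semisimple, with simple objects of pure slope $s\in\rat$, and in particular it has the Krull--Schmidt property. Hence one can cancel the common summand $K^{\oplus 2}$ (a sum of copies of the simple slope-$0$ object) from both sides to get $H_{\cris}^2(X/K)(1)\iso H_{\cris}^2(Y/K)(1)$, and untwisting by $(-1)$ gives $H_{\cris}^2(X/K)\iso H_{\cris}^2(Y/K)$ as $F$-isocrystals; in particular $H^2_{\cris}(X/W)$ and $H^2_{\cris}(Y/W)$ have the same Newton polygon. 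Since the remaining crystalline cohomology of a K3 surface consists of $H^0_{\cris}\iso W$ (pure slope $0$), $H^1_{\cris}=H^3_{\cris}=0$, and $H^4_{\cris}\iso W(-2)$ (pure slope $2$), as recalled above, the Newton polygon of the total crystalline cohomology is determined by that of $H^2$. Therefore $X$ and $Y$ have the same Newton polygon.

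Finally, the Hodge numbers of a K3 surface in characteristic $p$ agree with those of a complex K3 surface, so the Hodge polygon is the same for all K3 surfaces; ordinarity is then the condition that the Newton polygon meets the Hodge polygon (\cite[III Th\'eor\`eme 3.2]{Ch}), a condition on the Newton polygon once the Hodge polygon is fixed, and supersingularity is the condition that $H^2_{\cris}$ is isoclinic of slope $1$, again a condition on the Newton polygon. Both are thus derived-invariant. The only mildly delicate step is the cancellation, which rests on the semisimplicity (equivalently, Krull--Schmidt) of the category of $F$-isocrystals over the algebraically closed field $k$; the rest is bookkeeping with slopes and Tate twists.
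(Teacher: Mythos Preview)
Your proof is correct and follows the paper's approach, which simply observes that the (even) Mukai $F$-isocrystals are preserved under derived equivalence and leaves the rest implicit. Your appeal to Dieudonn\'e--Manin to cancel the $K^{\oplus 2}$ summand is slightly more than needed---equality of Newton polygons of the Mukai $F$-isocrystals already forces equality of Newton polygons of $H^2$, since the slope-$0$ contribution from $K^{\oplus 2}$ is identical on both sides---but it is certainly valid and even yields the stronger conclusion that the $H^2_{\cris}$ are isomorphic as $F$-isocrystals.
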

 Now, we additionally assume that $X$ is ordinary.
    As in the abelian variety case, ordinarity implies that the $F$-crystal $H_{\cris}^2(X/W)$ decomposes as a direct sum of copies of $W$ with different Tate twists. Ordinarity 
    also lets us read off the Newton polygon from the Hodge numbers. Thus, we obtain a slope filtration by free $W$-submodules
    \[0=Fil_{\slope}^3\subset Fil_{\slope}^2\subset Fil^1_{\slope}\subset Fil_{\slope}^0=H_{\cris}^2(X/W(k)),\]
where the ranks of the filtered pieces increase as $0,1,21,22.$ The perfect Poincar\'e pairing on $H^2_{crys}(X/W(k))$ respects the $F$-crystal structure, so we see $Fil_{\slope}^1=(Fil_{\slope}^2)^\perp$. We also endow the (even-degree) Mukai $F$-isocrystal \[H_{\cris}^0(X/K)\oplus H_{\cris}^2(X/K)(1)\oplus H_{\cris}^4(X/W)(2)\]  with the slope filtration.
\subsection{De Rham cohomology}
Let $\XCal$ be a lift of $X$ over $W(k)$.
The argument of \cite[Proposition 2.2]{DelK3}
shows that if $\XCal$ is a lift of a K3 surface $X$ from $k$ to $W(k)$ with structure map $f:\XCal\longrightarrow\Spec W(k)$, then $R^if_*\Omega^j_{\XCal/W(k)}$ is finite free, the base change map $R^if_*\Omega_{\XCal/W(k)}^j\tens_{W(k)} k\longrightarrow H^i(X,\Omega_{X/k}^j)$ is an isomorphism, the cup product $H^2_{\dR}(\XCal/W(k))\times H^2_{\dR}(\XCal/W(k))\longrightarrow H^4_{\dR}(\XCal/W(k))$ defines a perfect duality, and the relative Hodge to de Rham spectral sequence of $\XCal/W(k)$ degenerates at the $E_1$ page
(the reference works with the universal deformation, but the same argument works for a particular lift over $W(k)$). 
Then, by an argument similar to the abelian variety case (Lemma \ref{HdgFilBaseChange}), 
we get an isomorphism $R^2f_*\Omega_{\XCal/W(k)}^{\geq i}\iso Fil^iR^2f_*\Omega_{\XCal/W(k)}^\bullet$ (\cite[2.3]{DelK3}). This way, we get a Hodge filtration by free $W(k)$-submodules 
\[0=Fil_{\Hdg}^3\subset Fil_{\Hdg}^2\subset Fil^1_{\Hdg}\subset Fil_{\Hdg}^0=R^2f_*\Omega_{\XCal/W(k)}^\bullet=H_{\dR}^2(\XCal/W(k)),\]
where the ranks of the filtered pieces increase as $0,1,21,22.$
Using the perfect duality, we have $Fil^1_{\Hdg}=(Fil_{\Hdg}^2)^\perp$. 
We also obtain from the above 
that 
\[0=Fil^1_{\Hdg}H^0_{\dR}(\XCal/W)\subset Fil^0_{\Hdg}H^0_{\dR}(\XCal/W)=H^0_{\dR}(\XCal/W).\]
\[
0=Fil^3_{\Hdg}H^4_{\dR}(\XCal/W)\subset Fil^2_{\Hdg}H^4_{\dR}(\XCal/W)=H^4_{\dR}(\XCal/W).
\]
Similarly to the ``crystalline realization,'' the \textbf{(even) Mukai de Rham realization} of $\XCal_{\eta}$ is $H^0_{\dR}(\XCal_{\eta}/K)\oplus H^2_{\dR}(\XCal_{\eta}/K)(1)\oplus H^4_{\dR}(\XCal_{\eta}/K)(2)$ which has a filtration coming from the Tate twists of Hodge filtrations on the direct summands. 
\subsection{Proofs of the main results for K3 surfaces}
Here is the classical description of the canonical lifts for K3 surfaces via Hodge filtration:
\begin{Prop}[Canonical lifts of K3 surfaces via Hodge filtration, {\cite[Proposition 2.1.9, Th\'eor\`eme 2.1.11]{DelCris}}]
    Let $\XCal$ be a lift of an ordinary K3 surface $X$ from $k$ to $W$. $\XCal$ is the canonical lift of $X$ if and only if the Hodge filtration and the slope filtration are identified via the comparison isomorphism $H_{\cris}^2(X/W)\iso H_{\dR}^2(\XCal/W)$.
    This is also equivalent to requiring that the two submodules $Fil_{\Hdg}^2H_{\dR}^2(\XCal/W)$ and $Fil^2_{\slope}H^2_{\cris}(X/W)$ are identified via the comparison isomorphism.
\end{Prop}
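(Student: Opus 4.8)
The plan is to run the same argument as in the abelian variety case, with the Dieudonn\'e crystal $H^1$ replaced by the K3 crystal $H^2$ and Grothendieck--Messing theory replaced by the crystalline period map for K3 surfaces. The essential input, due to Deligne (see also work of Ogus and Nygaard), is a crystalline Torelli statement at the level of deformation functors: because the canonical bundle of $X$ is trivial one has $T_X\iso\Omega^1_X$, and the Kodaira--Spencer map then identifies the tangent space of the deformation functor of $X$ with the tangent space to the functor of isotropic rank-$1$ subbundles of $H^2_{\cris}(X/-)$ lifting the Hodge line. Consequently the period map, sending a lift $\XCal$ to $Fil^2_{\Hdg}H^2_{\dR}(\XCal/-)$ regarded inside $H^2_{\cris}(X/-)$ via crystalline base change and the Berthelot--Ogus comparison, is formally \'etale, hence an isomorphism of deformation functors. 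Passing to the inverse limit over $W/p^n$, one gets a bijection between lifts $\XCal$ of $X$ over $W$ and rank-$1$ direct summands $L\subset H^2_{\cris}(X/W)$ that are isotropic for the Poincar\'e pairing and reduce modulo $p$ to the Hodge line $Fil^2_{\Hdg}H^2_{\dR}(X/k)$; under this bijection, the summand attached to $\XCal$ is exactly the image of $Fil^2_{\Hdg}H^2_{\dR}(\XCal/W)$ under $H^2_{\dR}(\XCal/W)\iso H^2_{\cris}(X/W)$.

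Next I would observe that, for $X$ ordinary, the slope-$2$ piece $Fil^2_{\slope}H^2_{\cris}(X/W)=\big(H^2_{\cris}(X/W)^{F=p^2}\big)\tens_{\integ_p}W$ is itself such a summand: it is a rank-$1$ direct summand because the crystal is ordinary, it is isotropic since $Fil^2_{\slope}\subseteq Fil^1_{\slope}=(Fil^2_{\slope})^\perp$ as recorded in \S\ref{secK3}, and it reduces modulo $p$ to the Hodge line by Mazur's theorem (the K3 analogue of the description of the Hodge filtration in \cite[1.3.1]{DelCris} recalled above). Therefore $Fil^2_{\slope}$ corresponds, through the bijection of the previous paragraph, to a distinguished lift of $X$, which I will write $\XCal_0$.

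The remaining point is the identification of $\XCal_0$ with the canonical lift $X_{\can}$; this is the only place the actual construction of Deligne's canonical coordinates enters, as opposed to the bare period map. The canonical lift is the origin for the canonical (Serre--Tate type) coordinates on the formal deformation space, equivalently the unique lift admitting a compatible lift of Frobenius. Since $Fil^2_{\slope}$ is $F$-stable by construction, the lift $\XCal_0$ attached to it inherits a Frobenius lift and is therefore fixed by the Frobenius action on the formal moduli space, which by the defining property of the canonical coordinates forces $\XCal_0=X_{\can}$; alternatively, one invokes Deligne's direct computation showing that the canonical coordinates vanish precisely when the Hodge and slope filtrations agree.

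Combining these steps gives both equivalences: $\XCal$ is the canonical lift if and only if its period, i.e.\ the image of $Fil^2_{\Hdg}H^2_{\dR}(\XCal/W)$ in $H^2_{\cris}(X/W)$, equals $Fil^2_{\slope}H^2_{\cris}(X/W)$; and since in both filtrations the middle term is cut out as the orthogonal complement of the top term ($Fil^1_{\Hdg}=(Fil^2_{\Hdg})^\perp$, $Fil^1_{\slope}=(Fil^2_{\slope})^\perp$) and the Berthelot--Ogus isomorphism is compatible with cup product, equality of the rank-$1$ pieces $Fil^2$ is equivalent to equality of the full filtrations. I expect the first step --- the formal \'etaleness of the crystalline period map (the Kodaira--Spencer computation for the K3 crystal) together with its behavior under the passage to the $p$-adic limit --- to be the main obstacle; the identification with Deligne's canonical lift in the third step is comparatively formal once the canonical coordinates are available.
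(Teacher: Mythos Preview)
The paper does not supply a proof of this proposition at all: it is stated with a citation to \cite[Proposition 2.1.9, Th\'eor\`eme 2.1.11]{DelCris} and used as a black box from the literature. Your sketch is a faithful outline of precisely Deligne's argument in that reference --- the crystalline period map is formally \'etale by the Kodaira--Spencer computation, the slope-$2$ line is an admissible period reducing to the Hodge line by Mazur, and the canonical coordinates identify the corresponding lift with the canonical one --- so there is nothing to compare against on the paper's side. Your final paragraph, deducing equality of the full filtrations from equality of the top pieces via the orthogonality relations $Fil^1_{\Hdg}=(Fil^2_{\Hdg})^\perp$ and $Fil^1_{\slope}=(Fil^2_{\slope})^\perp$ together with compatibility of the Berthelot--Ogus isomorphism with cup product, is exactly the content of the second sentence of the proposition and the reason the paper records those orthogonality relations in the preceding paragraphs.
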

Using our knowledge of the Hodge/slope filtrations on $H^0$ and $H^4$ above, we can upgrade this to the following:
\begin{Prop}[Canonical lifts of K3 surfaces via Mukai structures]\label{CanLiftViaMukaiK3}
    Let $\XCal$ be a lift of an ordinary K3 surface $X$ from $k$ to $W$. $\XCal$ is the canonical lift of $X$ if and only if the Hodge filtration and the slope filtration are identified via the Berthelot-Ogus comparison isomorphism 
    between the (even) Mukai $F$-isocrystal and the (even) Mukai de Rham realization of $\XCal_{\eta}$.
\end{Prop}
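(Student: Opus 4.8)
The plan is to mimic the proof of Proposition \ref{CanLiftViaMukai} from the abelian variety case: reduce the assertion about the full (even) Mukai realizations to the degree-$2$ summand, where the preceding proposition (the classical criterion in terms of the Hodge and slope filtrations on $H^2$) applies directly, and check that the two remaining summands $H^0$ and $H^4$ contribute no additional condition.

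First I would record that the Berthelot--Ogus comparison isomorphism between the (even) Mukai $F$-isocrystal of $X$ and the (even) Mukai de Rham realization of $\XCal_\eta$ is, by construction, the direct sum of the degreewise comparison isomorphisms $H^i_{\cris}(X/K)\iso H^i_{\dR}(\XCal_\eta/K)$ for $i=0,2,4$; in particular it respects the direct sum decompositions of both realizations. Moreover, whether a given linear isomorphism carries one filtration onto another is unaffected by applying a common Tate twist to source and target, since a Tate twist merely reindexes a filtration. Hence the Hodge and slope filtrations are identified on the whole even Mukai realization if and only if they are identified, via the respective degreewise comparison isomorphisms, on $H^0$, on $H^2$, and on $H^4$ separately.

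Next I would dispose of the $H^0$ and $H^4$ summands: by the computations recalled in \S\ref{secK3} one has $H^0_{\cris}(X/W)\iso W$, so its slope filtration is $Fil^0_{\slope}=H^0_{\cris}(X/W)\supset Fil^1_{\slope}=0$, while on the de Rham side $Fil^0_{\Hdg}H^0_{\dR}(\XCal/W)=H^0_{\dR}(\XCal/W)\supset Fil^1_{\Hdg}=0$, and any isomorphism whatsoever carries the first of these filtrations to the second; the same argument applies verbatim to $H^4$, where both filtrations are concentrated in degree $2$. Thus the only substantive condition is the identification of the Hodge and slope filtrations on $H^2$ via $H^2_{\cris}(X/K)\iso H^2_{\dR}(\XCal_\eta/K)$; since on the integral level these filtrations are by direct summands (the relevant graded pieces $R^{i}f_*\Omega^{j}_{\XCal/W}$, resp.\ the slope graded pieces, are free by the degeneration statements in \S\ref{secK3}), this rational identification is equivalent to the integral one, i.e.\ to $H^2_{\cris}(X/W)\iso H^2_{\dR}(\XCal/W)$ matching $Fil^\bullet_{\slope}$ with $Fil^\bullet_{\Hdg}$. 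By the preceding proposition this holds exactly when $\XCal$ is the canonical lift of $X$, which gives the equivalence. There is no real obstacle here: the only input beyond the classical $H^2$-criterion is the explicit shape of the slope and Hodge filtrations on $H^0$ and $H^4$, which was already extracted in the preliminary computations of \S\ref{secK3}.
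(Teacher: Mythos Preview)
Your proposal is correct and follows exactly the approach the paper intends: the paper does not even spell out a separate proof, but merely notes that ``using our knowledge of the Hodge/slope filtrations on $H^0$ and $H^4$ above, we can upgrade'' the classical $H^2$-criterion to the Mukai statement. Your argument---splitting along the direct sum, observing that the $H^0$ and $H^4$ filtrations are trivial by the computations in \S\ref{secK3}, and reducing the remaining $H^2$ condition (rationally, then integrally via freeness of the graded pieces) to the preceding proposition---is precisely that upgrade made explicit.
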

\begin{Cor}\label{CanFMPK3}
    Let $\XCal,\YCal$ be smooth projective $W(k)$-schemes which are Fourier-Mukai equivalent relative to $W(k)$. If the special fiber of $\XCal$ is an ordinary K3 surface, and $\XCal$ is its canonical lift, the same holds for $\YCal$. 
\end{Cor}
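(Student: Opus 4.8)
The plan is to run the argument of Proposition \ref{CanFMPAV} verbatim, with the odd Mukai structures of abelian varieties replaced by the even Mukai structures of K3 surfaces set up above. \textbf{First}, I would check that $\YCal$ is a lift over $W$ of an \emph{ordinary} K3 surface: restricting the given relative Fourier-Mukai equivalence to the special fiber gives a derived equivalence $\Dbcoh(\XCal_s)\iso\Dbcoh(\YCal_s)$, so Lemma \ref{derInvK3} forces $\YCal_s$ to be a K3 surface and Lemma \ref{D inv ord K3} forces it to be ordinary. Since $\YCal$ is smooth projective over $W$ by hypothesis, it is automatically a lift of $\YCal_s$, so Proposition \ref{CanLiftViaMukaiK3} applies to it; note that, unlike the abelian variety case, there is no need for a K3 analogue of Corollary \ref{FMPASch}, because the geometric input it supplies there is here part of the hypothesis on $\YCal$.

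\textbf{Next}, I would invoke the cohomological Fourier-Mukai transforms attached to a kernel $\ECal$ realizing the equivalence. As in \cite[\S2]{LO}, the restriction $\ECal_\eta$ induces an isomorphism $\Phi^{\dR}_{\ECal_\eta}$ between the even Mukai de Rham realizations of $\XCal_\eta$ and $\YCal_\eta$, respecting the (Tate-twisted) Hodge filtrations because Chern and Todd classes lie in the middle piece of the Hodge filtration (\S\ref{secChern}); as $\ECal_\eta$ is invertible this is a filtered isomorphism. Likewise $\ECal_s$ induces an isomorphism $\Phi^{\cris}_{\ECal_s}$ of even Mukai $F$-isocrystals, which automatically preserves the slope filtrations since the slope filtration of an $F$-isocrystal is intrinsic. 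By Corollary \ref{ChernCompat}, the square formed by $\Phi^{\dR}_{\ECal_\eta}$, $\Phi^{\cris}_{\ECal_s}$ and the Berthelot-Ogus comparison isomorphisms of $\XCal$ and $\YCal$ commutes.

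\textbf{Finally}, I would conclude by a diagram chase. Since $\XCal$ is the canonical lift of $\XCal_s$, Proposition \ref{CanLiftViaMukaiK3} says that the Berthelot-Ogus isomorphism for $\XCal$ carries the Hodge filtration on the even Mukai de Rham realization onto the slope filtration on the even Mukai $F$-isocrystal. Transporting this identification across the commutative square above --- using that $\Phi^{\dR}_{\ECal_\eta}$ preserves Hodge filtrations and $\Phi^{\cris}_{\ECal_s}$ preserves slope filtrations --- shows that the Berthelot-Ogus isomorphism for $\YCal$ also carries the Hodge filtration onto the slope filtration, whence $\YCal$ is the canonical lift of $\YCal_s$ by the converse direction of Proposition \ref{CanLiftViaMukaiK3}.

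The only step that needs genuine care, and it is already dealt with in the appendix, is the strict compatibility of the de Rham cohomological transform with the Hodge filtration together with its compatibility with the crystalline transform under Berthelot-Ogus (Corollary \ref{ChernCompat}); granting this, everything reduces to the same formal diagram chase as in Proposition \ref{CanFMPAV}, and I expect no new difficulty specific to the K3 case.
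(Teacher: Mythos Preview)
Your proposal is correct and follows essentially the same approach as the paper, which simply says to argue as in Proposition \ref{CanFMPAV} using the even Mukai structures together with Lemma \ref{derInvK3}, Lemma \ref{D inv ord K3}, and Proposition \ref{CanLiftViaMukaiK3}. Your observation that no K3 analogue of Corollary \ref{FMPASch} is needed here is a nice clarification of why the argument is in fact slightly simpler than in the abelian variety case.
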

\begin{proof}
This is proved as in Proposition \ref{CanFMPAV} using the even degree Mukai structures for K3 surfaces,  Lemma \ref{derInvK3}, Lemma \ref{D inv ord K3} and Proposition \ref{CanLiftViaMukaiK3}.
\end{proof}
The following is the K3 case of the Theorem \ref{MainLift} (and it implies the K3 case of Corollary \ref{MainLiftCor}). Subsequent results in this section are essentially corollaries of results in \cite{LO} (see \cite[Theorem 1.3, \S7]{LO}):
\begin{Prop}[Lifting of kernels]\label{K3LiftKer}
Let $X$ and $Y$ be ordinary K3 surfaces, and assume that a kernel $E\in\Dbcoh(X\times_k Y)$ defines a Fourier-Mukai equivalence.
Then, $E$ lifts to a perfect complex $\ECal\in\DperfAbs(X_{\can}\times_{W(k)}Y_{\can})$, where $X_{\can}$ and $Y_{\can}$ denote the canonical lifts of $X$ and $Y$ over $W(k)$.
Moreover, the lift of $E$ is unique.
\end{Prop}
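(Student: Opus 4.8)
The plan is to reduce the statement to the deformation-theoretic analysis of Fourier-Mukai kernels for K3 surfaces carried out by Lieblich and Olsson (cf.\ \cite[Theorem 1.3, \S7]{LO}), using the identification of the slope and Hodge filtrations for canonical lifts recorded in Proposition \ref{CanLiftViaMukaiK3}. First I would note that $Y$ is again an ordinary K3 surface, by Lemma \ref{derInvK3} together with Lemma \ref{D inv ord K3}. Hence $Y$ has a canonical lift $Y_{\can}$ over $W$, and $X_{\can}\times_W Y_{\can}$ is a smooth projective (in particular noetherian) $W$-scheme with special fiber $X\times_k Y$, so that $\DperfAbs(X_{\can}\times_W Y_{\can})$ makes sense and the statement has content.

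Next I would analyze the cohomological realization. As in \cite[\S2]{LO}, the Fourier-Mukai equivalence $\Phi_E$ induces an isomorphism $\Phi^{\cris}_E$ between the (even) Mukai $F$-isocrystals of $X$ and $Y$, compatible with the Mukai pairings, with inverse induced by the quasi-inverse equivalence. Being an isomorphism of $F$-isocrystals over the perfect field $k$, $\Phi^{\cris}_E$ automatically preserves the Dieudonn\'e-Manin slope decompositions, hence the slope filtrations on both sides. By Proposition \ref{CanLiftViaMukaiK3}, the Berthelot-Ogus comparison isomorphisms identify these slope filtrations with the Hodge filtrations on the (even) Mukai de Rham realizations of $X_{\can,\eta}$ and $Y_{\can,\eta}$; combining this with the compatibility of the de Rham and crystalline cohomological Fourier-Mukai transforms under Berthelot-Ogus (Corollary \ref{ChernCompat}, exactly as in the proof of Proposition \ref{CanFMPAV}), it follows that the induced transform $\Phi^{\dR}_E$ on the Mukai de Rham realizations of $X_{\can,\eta}$ and $Y_{\can,\eta}$ carries the Hodge filtration to the Hodge filtration.

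I would then invoke the Lieblich-Olsson deformation theory of Fourier-Mukai kernels for K3 surfaces in characteristic $p>2$ (\cite[\S7]{LO}): since the cohomological transform attached to $E$ respects the Hodge filtrations cut out by the lifts $X_{\can}$ and $Y_{\can}$, the kernel $E$ lifts compatibly to a perfect complex $\ECal_n$ on $(X_{\can}\times_W Y_{\can})\tens_W W_n(k)$ for every $n$. Passing to the limit and algebraizing on the projective $W$-scheme $X_{\can}\times_W Y_{\can}$ (by formal GAGA, \cite[Th\'eor\`eme 5.4.1]{EGA3}, in its derived form, or directly since \cite{LO} works over complete discrete valuation rings) produces $\ECal\in\DperfAbs(X_{\can}\times_W Y_{\can})$ lifting $E$. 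By Corollary \ref{LiftEquiv}, $\ECal$ automatically defines a $W$-relative Fourier-Mukai equivalence.

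Finally, for uniqueness I would argue as follows. Since $\Phi_E$ is an equivalence, conjugation by it gives $\Ext^1_{X\times_k Y}(E,E)\iso\Ext^1_{X\times_k X}(\OCal_{\Delta_X},\OCal_{\Delta_X})=HH^1(X)$, and for a K3 surface over a field of characteristic $>2$ one has $HH^1(X)=H^1(X,\OCal_X)\oplus H^0(X,T_X)=0$. Hence, given a lift $\ECal_n$ over $W_n(k)$, the extensions to $W_{n+1}(k)$ form a torsor under $\Ext^1_{X\times_k Y}(E,E)\tens_k p^nW/p^{n+1}W=0$, so the lift is unique at every finite level and therefore over $W$. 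I expect the main obstacle to be pinning down the precise statement of \cite{LO} to cite for the lifting criterion and making the Hodge-filtration compatibility argument fully rigorous — in particular, correctly tracking the Tate twists in the Mukai structure and the passage from the crystalline slope filtration to the de Rham Hodge filtration of the canonical lift — rather than the deformation-theoretic bookkeeping, which is standard.
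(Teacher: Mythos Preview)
Your proposal is correct and follows essentially the same route as the paper: invoke \cite[Theorem 7.1]{LO} after verifying the Hodge-filtration compatibility via Proposition \ref{CanLiftViaMukaiK3} and the fact that $\Phi^{\cris}_E$ respects slopes, then deduce uniqueness from $HH^1(X)=0$ via the standard deformation-theoretic torsor argument. One small remark: before the lift $\ECal$ exists there is no $\Phi^{\dR}_E$ in the literal sense, so the map you need to check against the Hodge filtration is really the transport of $\Phi^{\cris}_E$ along the Berthelot--Ogus isomorphisms (as the paper phrases it), rather than a de Rham transform built from Chern classes of a lifted kernel; your invocation of Corollary \ref{ChernCompat} is therefore not needed at this step.
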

\begin{proof}
The liftability of the kernel is an immediate consequence of \cite
[Theorem 7.1]{LO}. Indeed, by this theorem, it suffices to check that the composition
\[
\xymatrix{
\bigoplus_{i=0}^2 H_{\dR}^{2i}(X_{\can,\eta}/K)(i)\ar[r]^-{\iso}& 
\bigoplus_{i=0}^2 H_{\cris}^{2i}(X/K)(i)\ar[d]_{\iso}^{\Phi_{E}^{\cris}}\\
\bigoplus_{i=0}^2 H_{\dR}^{2i}(Y_{\can,\eta}/K)(i) &\bigoplus_{i=0}^2 H_{\cris}^{2i}(Y/K)(i)\ar[l]_-{\iso} 
}
\] sends $Fil_{\dR}^2$ to $Fil^2_{\dR}$. On the other hand, we know that the horizontal isomorphisms identify the Hodge filtrations with the slope filtrations (Proposition \ref{CanLiftViaMukaiK3}), and $\Phi_{E}^{\cris}$ respects the slope filtrations. This establishes the liftability.

Now, assume that $E$ defines a Fourier-Mukai equivalence. Corollary \ref{LiftEquiv} then shows that the lift $\ECal$ also defines a relative Fourier-Mukai equivalence. 
For the uniqueness of the lift, it suffices to establish the uniqueness formally due to the formal GAGA result for pseudo-coherent complexes (\cite[Theorem 4.10]{Lim}, \cite{Lie}). 

For each integer $n\geq0$, we let $Z_n\defeq (X_{\can}\times_W Y_{\can})\times_W{W_{n+1}(k)}$ and $\ECal_n\defeq \ECal|_{Z_n}$.
For the uniqueness, we inductively use that $\ECal_{n+1}$  is the unique lift of $\ECal_n$ (defined similarly) to $Z_{n+1}$.
As in \cite{LODef}, the set of such lifts is a torsor under 
\begin{align*}
\Ext^1_{Z_n}(\ECal_n,\ECal_n\tens^L_{\OCal_{Z_n}} p^nW/p^{n+1}W)
&\iso 
\Ext^1_{X\times_kY}(E,E\tens^L_{\OCal_{Z_0}} p^nW/p^{n+1}W)\\
&\iso \Ext^1_{Z_0}(E,E)\tens_{k} p^nW/p^{n+1}W.
\end{align*}
Since $E$ defines a derived equivalence $\Dbcoh(X)\iso\Dbcoh(Y)$, we have
\[
 \Ext^1_{Z_0}(E,E)\iso \Ext^1_{X\times X}(\OCal_{\Delta},\OCal_{\Delta})=HH^1(X).
\]
Due to our characteristic assumption, we can use the HKR isomorphism (\cite[Corollary 0.6]{Amnon}) to see $HH^1(X)\iso H^0(X,T_X)\oplus H^1(X,\OCal_X)=0$. (In fact, we could deduce $HH^1(X)=0$ in any characteristics, because the relevant terms in the $E_2$ page of the HKR spectral sequence are already 0. Also we note that the HKR spectral sequence degenerates for $p=2$ in this case because $\dim X=2.)$ 
This shows that the lifting is unique.
\end{proof}
\begin{Rem}
    A weaker version of the above can be obtained as follows as well. Let $X,Y$ be derived equivalent K3 surfaces over $k$. We know from \cite{LO} that $Y\iso M_v(X)$ can be written as a certain fine moduli space of stable sheaves on $X$. Then, the relative moduli space $\mathcal{M}_v(X_{\can})$ over $W(k)$ is fine and Fourier-Mukai equivalent to $X_{\can}$ via the universal stable sheaf (Corollary \ref{LiftEquiv}). Proposition \ref{CanFMPK3} implies that $Y_{\can}\iso \mathcal{M}_v(X_{\can})$, so $X_{\can}, Y_{\can}$ are relative Fourier-Mukai partners. This argument, however, does not show that arbitrary Fourier-Mukai kernels can be lifted.
\end{Rem}
\begin{Prop}[Fourier-Mukai partners of canonical lifts]\label{FMPofCanLiftK3}
    Let $X$ be an ordinary K3 surface over $k$. Then, the restriction to the special fiber defines a bijection:
    \[
    r:\FMP(X_{\can}/W)\longrightarrow \FMP(X/k).
    \]
    In particular, the set $\FMP(X_{\can}/W)$ is finite.
\end{Prop}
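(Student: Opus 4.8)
The plan is to run the argument of Proposition \ref{FMPofCanLiftAV} essentially verbatim, substituting for each ingredient used in the abelian variety case its K3 counterpart: Lemma \ref{derInvK3} and Lemma \ref{D inv ord K3} play the roles of Proposition \ref{FMPAV}, Corollary \ref{FMPASch} and Lemma \ref{ord D inv}; Corollary \ref{CanFMPK3} plays the role of Proposition \ref{CanFMPAV}; and Proposition \ref{K3LiftKer} plays the role of Proposition \ref{CanLiftAVKer}. First I would observe that $r$ is well-defined: since base change preserves relative Fourier-Mukai equivalences (Remark \ref{baseChangeFMP}), restriction to the special fiber sends a $W$-relative Fourier-Mukai partner of $X_{\can}$ to a Fourier-Mukai partner of $X$ over $k$.

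Next I would construct a section $s$ of $r$. Given $[Y]\in\FMP(X/k)$, Lemma \ref{derInvK3} and Lemma \ref{D inv ord K3} show that $Y$ is again an ordinary K3 surface, so its (algebraizable) canonical lift $Y_{\can}$ over $W$ is defined; and by Proposition \ref{K3LiftKer} any Fourier-Mukai kernel on $X\times_kY$ lifts to a perfect complex on $X_{\can}\times_WY_{\can}$ defining a $W$-relative Fourier-Mukai equivalence, whence $[Y_{\can}]\in\FMP(X_{\can}/W)$. Setting $s([Y])\defeq[Y_{\can}]$ gives a well-defined map (the canonical lift depends only on the isomorphism class of $Y$), and clearly $r\circ s=\id$, so $s$ is injective and $r$ is surjective.

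It then remains to show $s$ is surjective. Let $[\YCal]\in\FMP(X_{\can}/W)$. Its special fiber $\YCal_s$ is a Fourier-Mukai partner of $X$, hence an ordinary K3 surface by Lemma \ref{derInvK3} and Lemma \ref{D inv ord K3}; and Corollary \ref{CanFMPK3} then forces $\YCal$ to be the canonical lift of $\YCal_s$, i.e.\ $[\YCal]=s([\YCal_s])$. Combined with $r\circ s=\id$, this shows $s$ is a bijection with inverse $r$, which is the first assertion.

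Finally, the finiteness of $\FMP(X_{\can}/W)$ is immediate from the bijection, given that $\FMP(X/k)$ is finite for an (ordinary) K3 surface over $k$ — a fact known in positive characteristic (cf.\ \cite{LO}, and classically over $\C$ via the counting of moduli spaces, see \cite{HuyFMT}). I do not expect a serious obstacle in this proposition itself: the substantive inputs — lifting of Fourier-Mukai kernels and the rigidity of canonical lifts under derived equivalence — were already established in Proposition \ref{K3LiftKer} and Corollary \ref{CanFMPK3}, so the only genuinely external ingredient is the finiteness of $\FMP(X/k)$, and the argument is otherwise a formal assembly.
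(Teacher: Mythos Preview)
Your proof is correct and follows essentially the same approach as the paper, which simply says the bijectivity ``is similar to the abelian variety case, and it follows from Lemma \ref{derInvK3}, Lemma \ref{D inv ord K3} and Corollary \ref{CanFMPK3}'' and cites \cite[\S9]{LO} for finiteness. You have spelled out precisely the analogue of the argument in Proposition \ref{FMPofCanLiftAV}, with each abelian-variety ingredient replaced by its K3 counterpart as the paper intends.
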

\begin{proof}
The proof of the bijectivity is similar to the abelian variety case, and it follows from Lemma \ref{derInvK3}, Lemma \ref{D inv ord K3} and  Corollary \ref{CanFMPK3}. For finiteness, see \cite[\S9]{LO}.
\end{proof}
\section{Sample application to a result in \cite{Honigs}}\label{secHonigs}
In this section, we will exhibit a sample application of Theorem \ref{MainLift}.
Namely, we will demonstrate that we can use this theorem to reprove the following special case of a result due to Honigs, Lombardi and  Tirabassi:
\begin{Prop}[Special case of {\cite[Theorem 1.1]{Honigs}}]\label{HonigsApp}
    Let $S$ be a hyperelliptic surface over an algebraically closed field $k$ of characteristic $p>3$, and let $A$ be its canonical cover. Additionally assume that $A$ is ordinary.
    Then, any Fourier-Mukai partner of $A$ is isomorphic to $A$ or $\widehat{A}$.
\end{Prop}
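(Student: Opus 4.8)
The plan is to reduce the statement, via Proposition \ref{FMPofCanLiftAV}, to the already-known characteristic-zero classification of Fourier--Mukai partners of canonical covers of bielliptic surfaces, by lifting everything to $W\defeq W(k)$ and passing to the geometric generic fibre. To set up: since $S$ is hyperelliptic and $\character(k)=p>3$, the order $m\in\{2,3,4,6\}$ of $\omega_S$ in $\Pic(S)$ is prime to $p$, so the canonical cover $c\colon A\longrightarrow S$ is a connected finite \'etale Galois cover with group $\integ/m$ acting freely on $A$, and $A$ is an abelian surface (\cite[\S4]{Honigs}), ordinary by hypothesis. By Proposition \ref{FMPofCanLiftAV} the restriction map $\FMP(A_{\can}/W)\longrightarrow\FMP(A/k)$ is a bijection, so it suffices to show that every $W$-relative Fourier--Mukai partner $\YCal$ of $A_{\can}$ has special fibre $\YCal_s$ isomorphic to $A$ or to $\widehat{A}$. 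By Corollary \ref{FMPASch} and Proposition \ref{CanFMPAV}, such a $\YCal$ is itself the canonical lift of an ordinary abelian surface, and by Remark \ref{baseChangeFMP} the base change $\YCal_{\overline{\eta}}$ is a Fourier--Mukai partner of $A_{\can,\overline{\eta}}$ over the algebraically closed field $\overline{K}$ of characteristic $0$.

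Next I would identify $A_{\can,\overline{\eta}}$. The deck group $\integ/m$ acts on $A$ by affine automorphisms --- translations by torsion points composed with group automorphisms --- and each of these lifts canonically to $A_{\can}$ over $W$, so by uniqueness the lifts assemble into a free $\integ/m$-action on $A_{\can}$ (\cite[Appendix]{MS}). The quotient $\SCal\defeq A_{\can}/(\integ/m)$ is then a smooth projective $W$-scheme with $\SCal_s\iso S$ and with $A_{\can}\longrightarrow\SCal$ a connected finite \'etale $\integ/m$-cover; using the rigidity of the numerical and torsion invariants of $\omega$ in the family $\SCal\longrightarrow\Spec W$, one checks that $\SCal_{\overline{\eta}}$ is again a bielliptic surface and that $A_{\can,\overline{\eta}}\longrightarrow\SCal_{\overline{\eta}}$ is its canonical cover. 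Now the characteristic-zero case of \cite[Theorem 1.1]{Honigs} (which in that case is classical) tells us that the only Fourier--Mukai partners of the abelian surface $A_{\can,\overline{\eta}}$ are itself and its dual $\widehat{A_{\can,\overline{\eta}}}$, and by Example \ref{ExDual} the latter equals $(\widehat{A})_{\can,\overline{\eta}}$. Hence $\YCal_{\overline{\eta}}$ is isomorphic to $A_{\can,\overline{\eta}}$ or to $(\widehat{A})_{\can,\overline{\eta}}$; since an abelian variety over $\overline{K}$ with good reduction has a well-defined reduction recovering the special fibre of its abelian $W$-model, this forces $\YCal_s\iso A$ or $\YCal_s\iso\widehat{A}$, which together with the bijection of Proposition \ref{FMPofCanLiftAV} gives the claim.

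I expect the middle step to be the main obstacle: one must lift the Galois structure of the canonical cover to $W$ (checking freeness, and that the affine automorphisms really assemble into a $\integ/m$-action) and, above all, verify that the quotient $\SCal_{\overline{\eta}}$ is genuinely a bielliptic surface whose canonical cover is $A_{\can,\overline{\eta}}$ --- so that \cite[Theorem 1.1]{Honigs} applies to it verbatim and not merely to \emph{some} \'etale abelian cover of a bielliptic surface. The remaining points --- compatibility of duality with canonical lifts through Example \ref{ExDual}, and the reduction argument relating $\YCal_s$ to $\YCal_{\overline{\eta}}$ --- are routine.
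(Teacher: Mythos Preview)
Your overall strategy---lift to $W$, pass to an algebraically closed field of characteristic $0$, apply the known characteristic-zero result, and specialize back---matches the paper's. The paper, however, organises the middle step differently: rather than lifting the $\integ/m$-action on $A$ abstractly and showing the quotient $\SCal_{\overline\eta}$ is again bielliptic, it first invokes the Bagnera--de Franchis classification to reduce to three concrete structural cases (Proposition~\ref{HonigsLemma}), in each of which $A$ comes with an explicit isogeny or \'etale cover $A\to E\times_k F$ that is a group homomorphism. That homomorphism, together with the order-$3$ automorphism of $F$ in case~(3), is what gets lifted canonically via \cite[Appendix]{MS}; one then applies Sosna's three-case statement (Proposition~\ref{SosnaInput}) over $\C$ (using an embedding $W\hookrightarrow\C$) and descends using \cite[Lemma~6.5]{LO}.

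The main point where your version needs more care is the assertion that the deck transformations are ``translations by torsion points composed with group automorphisms'' and therefore lift canonically. An affine automorphism $x\mapsto \rho(x)+t$ of finite order need not have $t$ torsion (e.g.\ $x\mapsto -x+t$ has order $2$ for every $t$), so there is no \emph{a priori} canonical lift of $t$ to $A_{\can}(W)$. In the hyperelliptic situation one can check, via the classification, that the translation parts are indeed prime-to-$p$ torsion, but at that point you are already using the structural input that the paper exploits directly. Once you grant that, the rest of your outline (freeness on $A_{\can}$, recognising $\SCal_{\overline\eta}$ as bielliptic with the expected canonical cover, and the N\'eron-model/specialisation step identifying $\YCal_s$) is routine, as you say. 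So your route is viable, but not materially shorter: the classification is doing the real work in both arguments, and the paper's version makes that dependence explicit while avoiding the need to verify separately that the generic-fibre quotient is bielliptic.
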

\begin{Rem}
    This type of result was originally proved by Sosna in \cite{Sosna} over $\C$ (\cite[Theorem 1.1]{Sosna}), and \cite[Theorem 1.1]{Honigs} proved it in characteristics $p>3$ without the ordinarity assumption.
\end{Rem}
By the same argument as \cite{Honigs}, due to the classification result of hyperelliptic surfaces, Proposition \ref{HonigsApp} will follow once we show the following proposition:
\begin{Prop}[Special case of {\cite[Theorem 4.4]{Honigs}}]\label{HonigsLemma}
  Let $A$ be an abelian surface over $k$, which is an algebraically closed field of characteristic $p>3$. Let $E,F$ be two elliptic curves over $k$. In the following three cases, any Fourier-Mukai partner of $A$ is isomorphic to $A$ or $\widehat{A}$:
\begin{enumerate}
    \item $A\iso E\times_k F$.
    \item $A$ is a degree 2 \'etale cyclic cover of $E\times_k F$.
    \item $A$ is a degree 3 \'etale cyclic cover of $E\times_k F$, and $F$ has an automorphism of order 3.
\end{enumerate}
\end{Prop}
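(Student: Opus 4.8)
The plan is to reduce Proposition \ref{HonigsLemma} to its characteristic-zero counterpart by passing to canonical lifts, using Theorem \ref{FMPofAVisAV}, Theorem \ref{MainLift} and Theorem \ref{MainFMPofCanLift}. Since $A$ is an ordinary abelian surface it admits an (algebraized) canonical lift $A_{\can}$ over $W=W(k)$; the same holds for $\widehat A$ (which is isogenous to $A$, hence ordinary), and $\widehat{A_{\can}}\iso(\widehat A)_{\can}$ by Example \ref{ExDual}. Now let $Y$ be a Fourier-Mukai partner of $A$ over $k$. By Theorem \ref{FMPofAVisAV} and Proposition \ref{ord D inv}, $Y$ is an ordinary abelian surface, and by Proposition \ref{CanLiftAVKer} (equivalently, by Proposition \ref{FMPofCanLiftAV}) the canonical lift $Y_{\can}$ is a $W$-relative Fourier-Mukai partner of $A_{\can}$. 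Base-changing to the geometric generic point (Remark \ref{baseChangeFMP}) then produces a derived equivalence $\Dbcoh(Y_{\can,\overline{\eta}})\iso\Dbcoh(A_{\can,\overline{\eta}})$ over the algebraically closed field $\overline K$ of characteristic $0$.

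The next step is to check that $A_{\can,\overline{\eta}}$ is again of one of the three listed types, so that the characteristic-zero form of the proposition applies. Canonical lifts are compatible with products (\cite[Appendix]{MS}), so in case (1) one gets $A_{\can,\overline{\eta}}\iso E_{\can,\overline{\eta}}\times F_{\can,\overline{\eta}}$, a product of two elliptic curves over $\overline K$. In cases (2) and (3) one writes the cover $A\to E\times F$ as the quotient isogeny by an embedded subgroup scheme $G\iso\integ/n$ with $n\in\{2,3\}$ (which is \'etale, as $n$ is prime to $p$); this isogeny lifts to an isogeny $A_{\can}\to E_{\can}\times F_{\can}$ whose kernel is a finite flat $W$-group scheme of order $n$ with special fibre $\integ/n$, hence \'etale and constant over $W$, so $A_{\can,\overline{\eta}}$ is a degree-$n$ \'etale cyclic cover of $E_{\can,\overline{\eta}}\times F_{\can,\overline{\eta}}$. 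Finally, in case (3) the order-$3$ automorphism of $F$ lifts uniquely to an order-$3$ automorphism of $F_{\can}$, hence of $F_{\can,\overline{\eta}}$. At this point I would invoke the characteristic-zero version of the statement being proved (contained in Sosna's work \cite{Sosna}, and valid over any algebraically closed field of characteristic zero by the Lefschetz principle, or by spreading out to a finitely generated base and base changing to $\C$) to conclude that $Y_{\can,\overline{\eta}}\iso A_{\can,\overline{\eta}}$ or $Y_{\can,\overline{\eta}}\iso\widehat{A_{\can,\overline{\eta}}}\iso(\widehat A)_{\can,\overline{\eta}}$.

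Finally I would descend this isomorphism from $\overline K$ to the closed fibre. Say $Y_{\can,\overline{\eta}}\iso A_{\can,\overline{\eta}}$; such an isomorphism is already defined over some finite subextension $K'/K$. Letting $W'$ be the integral closure of $W$ in $K'$, which is a complete discrete valuation ring with residue field again equal to $k$ (as $k$ is algebraically closed), the schemes $Y_{\can}\times_W W'$ and $A_{\can}\times_W W'$ are abelian schemes over $W'$ with isomorphic generic fibres, hence isomorphic, because an abelian scheme over a discrete valuation ring is the N\'eron model of its generic fibre. Restricting to the closed fibre yields $Y\iso A$; the case $Y_{\can,\overline{\eta}}\iso(\widehat A)_{\can,\overline{\eta}}$ likewise yields $Y\iso\widehat A$, and the proof would be complete.

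I expect the main difficulty to lie in the second paragraph: carefully verifying that taking canonical lifts and then geometric generic fibres transports each of the three structures (a product of elliptic curves, an \'etale cyclic cover, an order-$3$ automorphism), which relies on the deformation theory of \cite{MS}, together with the routine but worth-spelling-out passage of the known characteristic-zero statement from $\C$ to the general algebraically closed field $\overline K$ of characteristic zero.
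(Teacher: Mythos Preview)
Your proposal follows the same overall strategy as the paper: lift the Fourier--Mukai equivalence to characteristic zero via canonical lifts, invoke Sosna's complex result, and descend to the special fibre. Two technical differences are worth noting. First, for the descent step you pass to the geometric generic fibre over $\overline K$ and use that abelian schemes over a DVR are N\'eron models of their generic fibres; the paper instead fixes an embedding $W\hookrightarrow\C$, works over $\C$, and descends by spreading out together with the Matsusaka--Mumford-type specialization lemma \cite[Lemma 6.5]{LO}. Your route is arguably cleaner. Second, in case~(3) the paper formulates the characteristic-zero input (Proposition~\ref{SosnaInput}) with a Picard-number hypothesis on $X$ rather than an automorphism hypothesis on $F$, and accordingly verifies that $(F_{\can})_\C$, carrying an order-$3$ automorphism, is CM and hence that $\rho((A_{\can})_\C)\in\{2,4\}$ via \cite{PicNum}; you invoke Sosna directly under the automorphism hypothesis, which skips this short translation. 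This is not a gap, but it is where your sketch is thinnest relative to the paper's argument.
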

We will prove this proposition by lifting to characteristic 0 and then using the original proof of \cite{Sosna}. The ingredient from the characteristic 0 setting is the following: 
\begin{Prop}[{\cite[\S4]{Sosna}}, see also {\cite[Theorem 4.1]{Honigs}}]\label{SosnaInput}
Let $X$ be an abelian surface over $\C$. Let $E,F$ be two elliptic curves over $\C$. In the following three cases, any Fourier-Mukai partner of $X$ is isomorphic to $X$ or $\widehat{X}$:
\begin{enumerate}
    \item $X\iso E\times_k F$.
    \item $X$ is a degree 2 \'etale cyclic cover of $E\times_k F$.
    \item $X$ is a degree 3 \'etale cyclic cover of $E\times_k F$, and $\rho(F)$ is either 2 or 4.
\end{enumerate}
\end{Prop}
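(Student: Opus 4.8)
The plan is to prove the statement over $\C$ by transporting the problem to complex Hodge theory, where the Fourier-Mukai partners of an abelian surface become a finite lattice-theoretic datum that can be enumerated by hand in each of the three cases. The starting point is Orlov's derived Torelli theorem for abelian varieties: combining \cite[Theorem 2.19]{Orlov} with its converse \cite[Theorem 4.13]{Orlov} (or \cite[\S15]{Polish}), an abelian surface $Y$ is a Fourier-Mukai partner of $X$ if and only if there is an \emph{isometric} isomorphism $X\times\widehat{X}\cong Y\times\widehat{Y}$, i.e. an isomorphism of abelian surfaces carrying the canonical skew pairing between $X$ and $\widehat{X}$ to that between $Y$ and $\widehat{Y}$. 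That every Fourier-Mukai partner of an abelian surface over $\C$ is again an abelian surface is the complex case of Proposition \ref{FMPAV}, due to \cite[Proposition 3.1]{Huy} (one may alternatively invoke Proposition \ref{RouPopaSch}). Passing to first homology, this becomes a Hodge isometry between the weight-one Hodge structures $H_1(X\times\widehat{X},\integ)$ and $H_1(Y\times\widehat{Y},\integ)$ endowed with their canonical hyperbolic symplectic forms, so the task reduces to classifying, up to isomorphism, the abelian surfaces $Y$ admitting such a Hodge isometry.

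Next I would make the lattice data explicit. Elliptic curves are canonically self-dual, so in case (1) one has $\widehat{X}=\widehat{E}\times\widehat{F}\cong E\times F\cong X$ and $X\times\widehat{X}\cong (E\times F)^{2}$, while in cases (2) and (3) the dual $\widehat{X}$ is governed by the dual of the given \'etale isogeny and $X\times\widehat{X}$ is isogenous to $(E\times F)^{2}$. A Fourier-Mukai partner $Y$ then corresponds to a decomposition of $H_1(X\times\widehat{X},\integ)$ into a Lagrangian (maximal isotropic) sub-Hodge-structure $H_1(Y)$ together with a complementary Lagrangian $H_1(\widehat{Y})$ for the symplectic form. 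Such decompositions are controlled by the Hodge isometries of $H_1(X\times\widehat{X})$, hence ultimately by $\operatorname{End}(E)$, $\operatorname{End}(F)$ and the lattice of morphisms between the weight-one Hodge structures of $E$ and $F$; the hypotheses of the three cases---the precise shape of $X$ as a cover of $E\times F$ together with the arithmetic condition that $\rho(F)$ equals $2$ or $4$---are exactly what bound this endomorphism data and thereby force the relevant transcendental lattice to have class number $1$ or $2$.

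Finally I would run the case analysis. In each case one enumerates the Lagrangian sub-Hodge-structures up to isomorphism and checks that they are exhausted by the two obvious decompositions, giving $Y\cong X$ or $Y\cong\widehat{X}$; the Poincar\'e-bundle equivalence shows that $\widehat{X}$ genuinely occurs, and \cite[Corollary 2.20]{Orlov} guarantees the count is finite so the enumeration terminates. I expect the main obstacle to be case (3): when $X$ is a degree-$3$ cyclic cover and $F$ carries extra complex multiplication, the enlarged endomorphism and automorphism data could a priori produce additional Lagrangian sub-Hodge-structures, and the whole force of the hypothesis $\rho(F)\in\{2,4\}$ is that these extra symmetries either fail to be Hodge isometries or merely reproduce $X$ and $\widehat{X}$ rather than a new partner. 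Verifying this---that the pertinent genus has exactly one or two classes and that these match $\lvert\{X,\widehat{X}\}\rvert$---is the crux of the argument, and is precisely where the explicit computation of \cite[\S4]{Sosna} does the real work.
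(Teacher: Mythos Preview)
The paper does not prove this proposition at all: it is quoted verbatim from \cite[\S4]{Sosna} (with the reformulation in \cite[Theorem 4.1]{Honigs}) and used as a black-box input to the reduction-to-characteristic-zero argument for Proposition~\ref{HonigsLemma}. So there is no ``paper's own proof'' to compare against; your proposal is a sketch of how one might reprove a cited result.

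As for the sketch itself, the overall architecture is sound---Orlov's criterion reduces the question to classifying isometric isomorphisms $X\times\widehat{X}\cong Y\times\widehat{Y}$, and over $\C$ this becomes a Hodge-theoretic lattice problem---but you are blending two different lattice frameworks. You begin with the $H_1$/symplectic/Lagrangian picture (as in \cite{Orlov}, \cite{Polish}), yet midway you invoke the transcendental lattice and ``class number'' language, which belongs to the $H^2$-based counting formula for Fourier--Mukai partners of abelian surfaces (the Hosono--Lian--Oguiso--Yau/Shioda-type approach). Sosna's actual computation in \cite[\S4]{Sosna} runs entirely in the second framework: one identifies $\mathrm{NS}(X)$ explicitly in each case, computes the genus and the group of Hodge isometries of the transcendental lattice, and reads off that there are at most two partners. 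Your Lagrangian-decomposition outline is not wrong in principle, but you have not carried it out, and in your final paragraph you yourself hand the crux back to \cite[\S4]{Sosna}. So the proposal is really a restatement of ``this follows from Sosna'' dressed in a slightly different formalism, rather than an independent proof.
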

\begin{proof}[Proof of Proposition \ref{HonigsLemma}]
Let $B$ be a Fourier-Mukai partner of $A$ (which we know is an ordinary abelian surface by Proposition \ref{FMPAV} and Proposition \ref{ord D inv}). By Theorem \ref{MainLift}, $A_{\can}$ and $B_{\can}$ are $W$-relative Fourier-Mukai equivalent, so choosing an embedding $W\inj\C$, we see that
$(A_{\can})_{\C}$ and $(B_{\can})_{\C}$ are also Fourier-Mukai equivalent.
If $(A_{\can})_{\C}$ satisfies one of the three hypotheses of Proposition \ref{SosnaInput}, 
the proposition says that $(B_{\can})_{\C}$ is isomorphic to $(A_{\can})_{\C}$ or its dual. As is done in \cite{LO}, by spreading out and using \cite[Lemma 6.5]{LO}, we see that $B$ must be isomorphic to $A$ or its dual. (\cite[Lemma 6.5]{LO} works with K3 surfaces, but the same argument goes through fro abelian varieties. Also see the proof of \cite[Proposition 4.8]{Honigs}.)
Hence, the rest of the proof is to show that $(A_{\can})_{\C}$ satisfies one of the three conditions in Proposition \ref{SosnaInput}
.

In any of the three cases in Proposition \ref{HonigsLemma}, we are given an \'etale cyclic cover $f:A\longrightarrow E\times_kF$. Up to choosing the identity section appropriately, we may assume that this is a homomorphism of group schemes. By Lemma \ref{ord D inv}, $E$ and $F$ are ordinary, so $f$ can be lifted uniquely to a homomorphism of group schemes as 
\[
f_{\can}:A_{\can}\longrightarrow E_{\can}\times_WF_{\can}.
\]
$f$ is finite \'etale, so the same applies to $f_{\can}$. 
$(f_{\can})_*\OCal_{A_{\can}}$ is a finite free $\OCal_{E_{\can}\times_WF_{\can}}$-module, and by connectedness we see $f_{\can}$ has the same degree as $f$ globally. 
$\Ker(f_{\can})$ is an \'etale group scheme over $W$ whose special fiber is $(\integ/d\integ)_k$ 
where $d=\deg f$, so that $\Ker(f_{\can})=(\integ/d\integ)_W$. 
We then see that the geometric generic fiber $(f_{\can})_{\C}:(A_{\can})_{\C}\longrightarrow (E_{\can})_\C\times (F_{\can})_\C$ defines a cyclic covering of the same degree. At this point, we can invoke Proposition \ref{SosnaInput} in the first two cases. 

We now focus on the case where $f$ is a degree 3 covering and $F$ has an automorphism of order 3. Let $\sigma$ be this automorphism. It remains to show that $\rho((A_{\can})_\C)$ is 2 or 4. 
As before, $\sigma$ lifts uniquely to a group automorphism $\sigma_{\can}:F_{\can}\longrightarrow F_{\can}$.
We know $\sigma^3=\id_F$, so $(\sigma_{\can})^3=\id_{F_{\can}}$. 
The locus inside $F_{\can}$ where $\id_{F_{\can}}$ and $\sigma_{\can}$ coincide is closed by the separatedness of the map $F_{\can}\longrightarrow \Spec W$, so $\sigma_{\can}$ and $\id_{F_{\can}}$ must differ on the generic fiber, hence on the geometric generic fiber. It follows that $(F_{\can})_{\C}$ has an automorphism of order 3 and is a CM elliptic curve. 
Since isogenies preserve the Picard numbers, 
we have
$\rho((A_{\can})_\C)=\rho((E_{\can})_{\C}\times_\C (F_{\can})_{\C})$. 
If $(E_{\can})_{\C}$ and $(F_{\can})_{\C}$ are not isogenous, this number is 2 by \cite[Corollary 2.3]{PicNum}. 
Otherwise, by the isogeny-invariance of Picard number again, $\rho((E_{\can})_{\C}\times_\C (F_{\can})_{\C})=\rho((F_{\can})_{\C}\times_\C (F_{\can})_{\C})=4$  
(\cite[Corollary 2.6]{PicNum}).
\end{proof}

\appendix
\section{Crystalline and de Rham Chern classes}\label{secChern}
Let $k$ be a perfect field of characteristic $p$, and
let $\XCal$ be a smooth projective $W$-scheme. Let $X$ be its reduction modulo $p$.
Given a line bundle $L$ on $X$, we can define its first crystalline Chern class $c_{1,\cris}(L)\in H^2_{\cris}(X/W)$ using the connecting map of the following short exact sequence of crystalline sheaves
\[
0\longrightarrow 1+\JCal_{X/W}\longrightarrow\OCal_{X/W}^\times\longrightarrow i_*\OCal_{X}^\times\longrightarrow0
\]
and the map $\log:\JCal_{X/W}+1\longrightarrow\JCal_{X/W}\subset \OCal_{X/W}$ (see \cite{BerIll}, \cite[\S3]{BerOg} for details).

As in \cite{BerIll}, this can be (uniquely) extended to higher Chern classes. Namely, given a locally free $\OCal_{X}$-module $E$, we can associate its crystalline Chern classes $c_{i,\cris}(E)\in H^{2i}_{\cris}(X/W)$ for various indices $i$ (the original reference works with Chern classes valued in $H^{2i}((X/W)_{\cris},\JCal_{X/W}^{[i]})$, but we are working with their targets along the maps 
$H^{2i}((X/W)_{\cris},\JCal_{X/W}^{[i]})\longrightarrow H^{2i}((X/W)_{\cris},\OCal_{X/W})$). These classes satisfy the usual compatibility with pullback and additivity, and when $E=L$, the total Chern class is $1+c_{1,\cris}(L)$ whose construction is as above. 

On the other hand, the map $d\log:\OCal_{\XCal}^\times\longrightarrow\Omega^1_{\XCal/W};f\mapsto\frac{df}{f}$ lets us construct the first de Rham Chern class $c_{1,\dR}(-)\in H_{\dR}^2(\XCal/W)$ of line bundles on $\XCal$ 
(\cite[\S3]{BerOg}, \cite[\href{https://stacks.math.columbia.edu/tag/0FLE}{Tag 0FLE}]{stacks-project}). By construction, these Chern classes land in $Fil^1H^2_{\dR}(\XCal/W)$.
By splitting principle, 
the de Rham Chern class also extends to higher Chern classes (for smooth projective $W$-schemes) (see \cite[\href{https://stacks.math.columbia.edu/tag/0FW8}{Tag 0FW8}]{stacks-project}). 
By this construction and \cite[\href{https://stacks.math.columbia.edu/tag/0FM7}{Tag 0FM7}]{stacks-project},  the $i$-th de Rham Chern class lands in $Fil^iH_{\dR}^{2i}(\XCal/W)$. 
These two theories are compatible in the following sense:
\begin{Prop}[{\cite[Proposition (3.4)]{BerOg}}]
    If $\LCal$ is a line bundle on $\XCal$, with its restriction $L$ on $X$, then $c_{1,\cris}(L)$ maps to $c_{1,\dR}(\LCal)$ via the Berthelot-Ogus isomorphism
    $
    H_{\cris}^2(X/W)\iso H_{\dR}^2(\XCal/W)
    .$
\end{Prop}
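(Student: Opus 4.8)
The plan, following Berthelot--Ogus, is to compute both Chern classes as explicit cocycles with respect to a common affine cover and to observe that, once the Berthelot--Ogus comparison is realized at the level of complexes, they become literally the same cocycle. To begin, recall that $c_{1,\dR}(\LCal)$ is by construction the image of the class $[\LCal]\in H^1(\XCal,\OCal_\XCal^\times)$ under the map to $\mathbb{H}^2(\XCal,\Omega^\bullet_{\XCal/W})=H_{\dR}^2(\XCal/W)$ induced by $d\log\colon\OCal_\XCal^\times\to\Omega^{\geq 1}_{\XCal/W}[1]$, while $c_{1,\cris}(L)$ is the image of $[L]\in H^1((X/W)_{\cris},i_*\OCal_X^\times)=H^1(X,\OCal_X^\times)$ under the boundary map $\delta$ of the exponential sequence followed by $\log\colon 1+\JCal_{X/W}\iso\JCal_{X/W}\inj\OCal_{X/W}$. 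I would also recall that the Berthelot--Ogus isomorphism is functorial and arises, via the crystalline Poincar\'e lemma, from a canonical isomorphism $Ru_{X/W,*}\OCal_{X/W}\iso\widehat{\Omega}^\bullet_{\XCal/W}$ in the derived category of abelian sheaves on $X=\XCal_s$; since $\XCal$ is projective, $p$-adic completion does not change cohomology, so this induces $H_{\cris}^2(X/W)\iso H_{\dR}^2(\XCal/W)$.

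Next I would choose an affine open cover $\mathfrak{U}=\{U_i\}$ of $\XCal$ (equivalently of $X$) trivializing $\LCal$, with transition functions $g_{ij}\in\OCal_\XCal^\times(U_{ij})$ reducing to $\bar g_{ij}\in\OCal_X^\times(U_{ij})$. On the de Rham side, $c_{1,\dR}(\LCal)$ is then represented, in the total complex of the \v{C}ech--de Rham bicomplex $\check{C}^\bullet(\mathfrak{U},\Omega^\bullet_{\XCal/W})$, by the \v{C}ech $1$-cochain $(d\log g_{ij})_{ij}\in\check{C}^1(\mathfrak{U},\Omega^1_{\XCal/W})$, which is a total-degree-$2$ cocycle because $d\log g_{ij}+d\log g_{jk}-d\log g_{ik}=d\log(g_{ij}g_{jk}g_{ik}^{-1})=0$ and $d(d\log g_{ij})=0$. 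On the crystalline side I would realize $R\Gamma((X/W)_{\cris},-)$ by the \v{C}ech--Alexander (linearized de Rham) complex attached to $\mathfrak{U}$ and the lift $\XCal$, which under the isomorphism above matches the \v{C}ech--de Rham bicomplex. Tracing $\delta$ and $\log$ through this identification, and using the elementary identity
\[ d(\log g)=\frac{dg}{g}=d\log g\qquad(g\in\OCal_\XCal^\times), \]
one should find that $\log\circ\,\delta$ carries $[\bar g_{ij}]$ to the very same cocycle $(d\log g_{ij})_{ij}$, so the two classes agree under $H_{\cris}^2(X/W)\iso H_{\dR}^2(\XCal/W)$.

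The hard part will be this last matching step: although the $g_{ij}$ satisfy the cocycle identity exactly on $\XCal$ (so the ``\v{C}ech part'' of $\delta$ vanishes), the crystalline boundary map also has a ``connection part'' coming from the Poincar\'e lemma, and one must verify that this part produces precisely the forms $d\log g_{ij}$, with the correct sign and with careful bookkeeping of the divided-power structure on $(p)$ and of $p$-adic completion (in particular the $p=2$ subtlety, which one can sidestep by first working modulo $p^n$ and passing to the limit). An alternative that isolates the same computation in a single example: by additivity of both Chern classes and functoriality of all three cohomology theories and of the Berthelot--Ogus isomorphism, one reduces to a very ample $\LCal$, hence via a closed immersion $j\colon\XCal\inj\projspace{N}{W}$ with $\LCal=j^*\OCal(1)$ to the pair $(\projspace{N}{W},\OCal(1))$, and then, by restricting to a line, to an explicit computation on $\projspace{1}{W}$ with the standard cover, where both classes are visibly represented by $[d\log(x_1/x_0)]$.
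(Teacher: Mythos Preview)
The paper does not give a proof of this proposition: it is stated as a citation of \cite[Proposition (3.4)]{BerOg} and used as a black box, with the subsequent extension to higher Chern classes handled by the splitting principle. So there is no ``paper's own proof'' to compare against here.

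Your sketch is a reasonable outline of how the argument in Berthelot--Ogus actually goes, and the cocycle computation you describe is essentially the content of their Proposition (3.4). A couple of remarks. First, in your alternative reduction, to get from an arbitrary line bundle to a very ample one you are implicitly writing $\LCal\iso\LCal_1\tens\LCal_2^{-1}$ with each $\LCal_i$ very ample and using additivity of $c_1$; this is fine on a projective $W$-scheme but should be said. Second, the step ``restricting to a line'' requires that the pullback $H^2(\projspace{N}{W})\to H^2(\projspace{1}{W})$ be injective in both theories and compatible with the comparison isomorphism; this holds because both sides are free of rank one generated by the hyperplane class, and functoriality of the Berthelot--Ogus isomorphism gives compatibility, but again it deserves a sentence. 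Finally, the ``hard part'' you flag---that the connection component of the crystalline boundary produces exactly $d\log g_{ij}$---is indeed where the honest work lies, and your description of how to handle the divided powers by working modulo $p^n$ and passing to the limit is the right maneuver; in a full write-up you would want to spell out the identification of $Ru_{X/W,*}\OCal_{X/W}$ with the completed de Rham complex via the crystalline Poincar\'e lemma carefully enough to trace the cocycle through.
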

Given a vector bundle $\ECal$ on $\XCal$, the projective bundle construction used in the framework of splitting principle can be performed for $\XCal/W$ (which realizes the version for $X/k$ on the special fiber), the comparison result above extends to: 
\begin{Prop}
    For a locally free $\OCal_{\XCal}$-module $\ECal$, let $E$ be its restriction to $X$. 
    Then the total Chern classes $c_{\totale,\cris}(E)$ and $c_{\totale,\dR}(\ECal)$ map to each other via the comparison isomorphism
    $H_{\cris}^*(X/W)\iso H_{\dR}^*(\XCal/W)$.
\end{Prop}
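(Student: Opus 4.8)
The plan is to deduce this from the line bundle case (the preceding proposition) by the splitting principle, which is exactly the mechanism by which the higher Chern classes were defined above. Let $r$ be the rank of $\ECal$, let $\pi\colon\mathbb{P}(\ECal)\longrightarrow\XCal$ be the associated projective bundle with tautological line bundle $\OCal_{\mathbb{P}(\ECal)}(1)$, and let $\pi_0\colon\mathbb{P}(E)\longrightarrow X$ be its reduction mod $p$, so $\mathbb{P}(E)=\mathbb{P}(\ECal)\times_W k$ and $\OCal_{\mathbb{P}(E)}(1)$ is the restriction of $\OCal_{\mathbb{P}(\ECal)}(1)$. Both theories satisfy the projective bundle formula: setting $\eta_{\dR}=c_{1,\dR}(\OCal_{\mathbb{P}(\ECal)}(1))$ and $\eta_{\cris}=c_{1,\cris}(\OCal_{\mathbb{P}(E)}(1))$, the module $H^*_{\dR}(\mathbb{P}(\ECal)/W)$ is free over $H^*_{\dR}(\XCal/W)$ on $1,\eta_{\dR},\dots,\eta_{\dR}^{r-1}$, and $H^*_{\cris}(\mathbb{P}(E)/W)$ is free over $H^*_{\cris}(X/W)$ on $1,\eta_{\cris},\dots,\eta_{\cris}^{r-1}$; in particular the pullbacks $\pi^*$ and $\pi_0^*$ are (split) injective.

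First I would use that the Berthelot--Ogus comparison isomorphism is natural with respect to morphisms of smooth projective $W$-schemes and is a homomorphism of graded rings (compatibility with cup product is already invoked in the body of the paper). Applied to $\pi$ and $\pi_0$, this gives a commutative square in which the two horizontal comparison isomorphisms, for $\XCal$ and for $\mathbb{P}(\ECal)$, are intertwined by $\pi_0^*$ and $\pi^*$ and are compatible with cup products. By the preceding proposition applied to the line bundle $\OCal_{\mathbb{P}(\ECal)}(1)$, the comparison isomorphism for $\mathbb{P}(\ECal)$ sends $\eta_{\cris}$ to $\eta_{\dR}$, hence $\eta_{\cris}^{\,j}$ to $\eta_{\dR}^{\,j}$ for all $j$.

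Finally I would invoke Grothendieck's characterization of the Chern classes produced by the splitting principle: the $c_{i,\cris}(E)$ are the unique classes with $c_{0,\cris}(E)=1$ and $\sum_{i=0}^{r}(-1)^i\pi_0^*c_{i,\cris}(E)\cdot\eta_{\cris}^{\,r-i}=0$ in $H^{2r}_{\cris}(\mathbb{P}(E)/W)$, and likewise for $c_{i,\dR}(\ECal)$ with $\pi^*$ and $\eta_{\dR}$ (cf.\ \cite[\href{https://stacks.math.columbia.edu/tag/0FW8}{Tag 0FW8}]{stacks-project} and \cite{BerIll}). Writing $\phi$ for the comparison isomorphism, applying $\phi$ to the crystalline relation and using that $\phi$ is a ring map carrying $\eta_{\cris}^{\,j}$ to $\eta_{\dR}^{\,j}$ and satisfying $\phi\circ\pi_0^*=\pi^*\circ\phi$, we obtain $\sum_{i=0}^{r}(-1)^i\pi^*\big(\phi(c_{i,\cris}(E))\big)\cdot\eta_{\dR}^{\,r-i}=0$; since $\phi(c_{0,\cris}(E))=1$, the uniqueness in the de Rham characterization (which rests on the injectivity of $\pi^*$) forces $\phi(c_{i,\cris}(E))=c_{i,\dR}(\ECal)$ for every $i$, i.e.\ the total Chern classes correspond. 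The only point requiring real care, and the main obstacle, is to confirm that the projective bundle formula and the naturality/multiplicativity of the Berthelot--Ogus isomorphism hold in exactly this relative form over $\Spec W$, where $\XCal$ may a priori have large Krull dimension; but since $\XCal$ is smooth and projective over $W$ this is standard, the crystalline side following from \cite{BerIll}, the de Rham side and the splitting principle from \cite[\href{https://stacks.math.columbia.edu/tag/0FW8}{Tag 0FW8}]{stacks-project}, and functoriality and multiplicativity of the comparison map being built into the construction in \cite{BerOg}.
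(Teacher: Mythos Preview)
Your proposal is correct and is precisely the approach the paper takes: it reduces to the line bundle case via the projective bundle construction/splitting principle, noting that $\mathbb{P}(\ECal)\to\XCal$ restricts to $\mathbb{P}(E)\to X$ on the special fiber. The paper states this in a single sentence, whereas you have spelled out the details (projective bundle formula, naturality and multiplicativity of the Berthelot--Ogus isomorphism, and Grothendieck's defining relation for the Chern classes), but the argument is the same.
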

The smooth projective $W$-scheme $\XCal$ has the resolution property (\cite[\href{https://stacks.math.columbia.edu/tag/0FDD}{Tag 0FDD}]{stacks-project}), so this result can be stated in terms of $K_0(\DperfAbs(\XCal))$ (\cite[\href{https://stacks.math.columbia.edu/tag/0FDJ}{Tag 0FDJ}]{stacks-project}):
\begin{Cor}\label{ChernCompat}
    Let $\XCal$ be a smooth projective $W$-scheme. Then, we have a commutative diagram
    \[
    \xymatrix{
    K_0(\DperfAbs(\XCal))\ar[r]\ar[d]_{c_{\totale,\dR}}&K_0(\DperfAbs(X))\ar[d]_{c_{\totale,\cris}}\\
    H^*_{\dR}(\XCal/W) \ar[r]^-{\iso }&H^*_{\cris}(X/W)
    }
    \]
    where the top horizontal map is the restriction to the special fiber, and the bottom horizontal map is the comparison isomorphism.
\end{Cor}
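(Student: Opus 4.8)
The plan is to deduce the corollary from the preceding proposition by passing to a generating set of $K_0$ furnished by the resolution property. Since a smooth projective $W$-scheme has the resolution property (\cite[\href{https://stacks.math.columbia.edu/tag/0FDD}{Tag 0FDD}]{stacks-project}), and likewise for $X$, the groups $K_0(\DperfAbs(\XCal))$ and $K_0(\DperfAbs(X))$ are generated by classes $[\ECal]$ of locally free sheaves. The first thing I would pin down is that the two vertical arrows genuinely descend to $K_0$: this is the compatibility of the total Chern class with short exact sequences (the Whitney sum formula), which is exactly \cite[\href{https://stacks.math.columbia.edu/tag/0FDJ}{Tag 0FDJ}]{stacks-project}, so I would simply invoke it. On the generators the vertical maps are then given by $[\ECal]\mapsto c_{\totale,\dR}(\ECal)$ and $[E]\mapsto c_{\totale,\cris}(E)$.

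Next I would describe the top horizontal arrow on generators. Writing $i\colon X\inj\XCal$ for the inclusion of the special fiber, flatness of $\XCal$ over $W$ makes any vector bundle $\ECal$ on $\XCal$ flat over $W$ as well, so $Li^*\ECal\iso\ECal|_X$ with no higher Tor sheaves; hence the restriction map sends $[\ECal]$ to $[\ECal|_X]$. (For the map to be well defined on all of $K_0$ one only needs that a distinguished triangle of perfect complexes on $\XCal$ restricts to a distinguished triangle on $X$, but the behavior on generators is all that is used below.)

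The verification then reduces to a diagram chase on a single class $[\ECal]$ with $\ECal$ a vector bundle: the composite ``down then right'' sends it to the image of $c_{\totale,\dR}(\ECal)$ under the Berthelot-Ogus comparison isomorphism $H^*_{\dR}(\XCal/W)\iso H^*_{\cris}(X/W)$, while ``right then down'' sends it to $c_{\totale,\cris}(\ECal|_X)$, and these agree by the preceding proposition. Since both composites $K_0(\DperfAbs(\XCal))\to H^*_{\cris}(X/W)$ convert addition in $K_0$ into the same (Whitney product) group law on the target --- the restriction map is additive, the total Chern classes turn sums into Whitney products, and Berthelot-Ogus is a ring isomorphism --- agreement on the generating set $\{[\ECal]\}$ forces agreement on all of $K_0(\DperfAbs(\XCal))$, and the square commutes.

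The only genuinely delicate point is bookkeeping rather than geometry: one must take care that the total Chern class is regarded as an invariant of a class in $K_0$ and not merely of a bundle, and that the two candidate maps around the square are ``homomorphisms'' in the appropriate twisted sense, so that checking them on the generators $[\ECal]$ suffices. Granting the cited stacks project facts, the corollary is a formal consequence of the preceding comparison proposition together with the multiplicativity of the Berthelot-Ogus isomorphism; I do not anticipate any substantive difficulty beyond organizing these formalities.
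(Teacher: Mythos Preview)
Your proposal is correct and follows exactly the same approach as the paper: the paper simply observes that the resolution property (\cite[\href{https://stacks.math.columbia.edu/tag/0FDD}{Tag 0FDD}]{stacks-project}) together with the Whitney formula (\cite[\href{https://stacks.math.columbia.edu/tag/0FDJ}{Tag 0FDJ}]{stacks-project}) reduces the corollary to the preceding proposition on vector bundles. Your write-up merely unpacks this one-line reduction in more detail, including the bookkeeping about the Whitney product structure that the paper leaves implicit.
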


\bibliographystyle{alpha}
\bibliography{bib}

\end{document}